\def\tto{\;{\lower 1pt \hbox{$\rightarrow$}}\kern -10pt
	\hbox{\raise 2pt \hbox{$\rightarrow$}}\;}
\def\Hat{\widehat}
\def\Bar{\overline}
\def\ra{\rangle}
\def\la{\langle}
\def\epsilon{\varepsilon}
\def\h{\hfill\Box}
\def\R{\Bbb R}
\def\N{\Bbb N}
\def\ox{\bar{x}}
\def\co{\mbox{\rm co}}
\def\ri{\mbox{\rm ri}}
\def\aff{\mbox{\rm aff}}
\def\epi{\mbox{\rm epi}}
\def\dim{\mbox{\rm dim}}
\def\dom{\mbox{\rm dom}}
\def\aff{\mbox{\rm aff}}
\def\cone{\mbox{\rm cone}}
\def\h{\hfill\square}
\def\ph{\varphi}
\def\oR{\Bar{\R}}
\def\al{\alpha}
\def\ph{\varphi}
\def\oR{\Bar{\R}}
\def\al{\alpha}
\DeclareMathOperator{\qri}{qri}
\DeclareMathOperator{\prox}{prox}
\DeclareMathOperator{\argmin}{arg\,min}
\def\Ts{\textstyle}
\setlist[enumerate,1]{itemsep=0.0ex,parsep=0.5ex,label={\rm(\alph*)},leftmargin=*, align=left}
\newcounter{lk}
\definecolor{gls}{rgb}{0.0, 0.5, 1.0}
\newtheorem{Theorem}{Theorem}[section]
\newtheorem{Proposition}[Theorem]{Proposition}
\newtheorem{Remark}[Theorem]{Remark}
\newtheorem{Lemma}[Theorem]{Lemma}
\newtheorem{Corollary}[Theorem]{Corollary}
\newtheorem{Definition}[Theorem]{Definition}
\newtheorem{Example}[Theorem]{Example}
\numberwithin{equation}{section}
\title{Lagrange Multipliers and Duality   with Applications to Constrained  Support Vector Machine}
\author{{\sc Nguyen Mau   Nam}\footnote{Fariborz Maseeh Department of Mathematics and Statistics, Portland State University, Portland, OR
97207, USA (mnn3@pdx.edu). Research of this author was partly supported by the National Science
Foundation (NSF): NSF-RTG grant No.  DMS-2136228.},
{\sc Gary Sandine}\footnote{Fariborz Maseeh Department of Mathematics and Statistics, Portland State University, Portland, OR
97207, USA (gsandine@pdx.edu).}, and 
{\sc Quoc Tran-Dinh}\footnote{Department of Statistics and Operations Research,
The University of North Carolina at Chapel Hill, Chapel Hill, NC 27599 (quoctd@email.unc.edu). Research of this author was partly supported by the National Science
Foundation (NSF): NSF-RTG grant No. NSF DMS-2134107 and the Office of Naval
Research (ONR), grant No. N00014-23-1-2588.}
}
\date{}  
\begin{document}

\maketitle

\begin{center}
\bf In the Cherished Memory of Prof.~Dr.~Franco Giannessi
\end{center}

\textbf{Abstract.} In this paper, we employ the concept of quasi-relative interior to analyze the method of Lagrange multipliers and establish strong Lagrangian duality for nonsmooth convex optimization problems in Hilbert spaces. Then, we generalize the classical support vector machine (SVM) model by incorporating a new geometric constraint or a regularizer on the separating hyperplane, serving as a regularization mechanism for the SVM model. This new SVM model is examined using Lagrangian duality and other convex optimization techniques in both theoretical and numerical aspects via a new subgradient algorithm as well as a primal-dual method.

\textbf{Keywords.} Lagrange multipliers, Lagrangian duality, quasi-relative interior, constrained support vector machine.

\section{Introduction}
The method of Lagrange multipliers lies at the heart of optimization, forming the foundation of various modern optimization algorithms. 
This method was introduced at the end of the 18th century by Joseph-Louis Lagrange as a 
way to tackle constrained optimization problems and has been extensively 
studied and extended through the 20th century. 
Therefore, it plays a critical role in a multitude of 
applications spanning several disciplines, including economics and operations research, engineering, and modern machine learning.
In a nutshell, the method of Lagrange multipliers converts a 
constrained problem into an unconstrained problem through the introduction of 
Lagrange multipliers, which capture the relationship between the gradient of 
the objective function and the gradients of the constraints. The modern framework of Lagrangian duality appeared much later, particularly in the mid-20th century, with significant contributions made by a number of researchers, such as John von Neumann and George Dantzig, particularly in linear programming. Von Neumann's work on game theory and duality laid foundational concepts, while Dantzig's simplex method highlighted the practical utility of duality in solving linear programming problems. Lagrangian duality provides a framework to address constrained optimization problems by constructing their dual problem through the Lagrange function. In many cases, dual problems serve as valuable tools for analyzing the primal problems, both theoretically and numerically. 
The method of Lagrange multipliers and Lagrangian duality are closely related, as Lagrangian duality is built based on the method of Lagrange multipliers.
For foundational theory on convex analysis, the method of Lagrange multipliers, and Lagrangian duality, we refer to~\cite{bc,Bert,Borwein2000,boyd-2004-convex,giannessi-2007-theory,bmn,bmn2022,r,Zalinescu2002}. In particular, the work of Giannessi~\cite{giannessi-2007-theory} plays a crucial role in the modern theory of Lagrangian duality.

The first part of this paper is devoted to a comprehensive study of the method of Lagrange multipliers and Lagrangian duality for nonsmooth convex constrained optimization in infinite-dimensional spaces. We introduce new Slater conditions using the notion of the quasi-relative interior introduced by Borwein and Lewis \cite{bl-qri}. These quasi-relative interior Slater conditions turn out to be useful in obtaining both the generalized Karush–Kuhn–Tucker (KKT) conditions and strong Lagrangian duality for an important class of convex programs in infinite-dimensional spaces.

In the second part of this paper, we generalize the classical support vector machine (SVM) problem to a new model in which we impose a constraint involving the normal, or weight vector $w$ and the bias $b$ of the separating hyperplane $\langle w, x \rangle +b=0$. The motivation comes from the seminal work \cite{Pegasos}, introducing the \emph{Pegasos} algorithm, which incorporates an optional Euclidean projection onto a ball centered at the origin to enhance its performance.  By introducing a constraint on the support vector and the bias of the separating hyperplane, our new model not only provides the mathematical foundation for the optional projection step as in \emph{Pegasos} \cite{Pegasos}, but also allows us to incorporate different ways of controlling the weight vector and the bias as a regularization mechanism in SVM.

We also provide a detailed formulation and our approach to studying this new constrained SVM model from both theoretical and numerical perspectives, which offer new insights into the classical SVM technique. Using tools from convex analysis and optimization, we study the existence and uniqueness of optimal solutions to the new constrained SVM problem for both the hard-margin and the soft-margin settings, as well as their unconstrained models based on the penalty method and their dual problems obtained by the Lagrangian duality theory mentioned earlier. It turns out that the Lagrange dual problems obtained contain a term involving the squared distance function to the constraint set. To the best of our knowledge,  this has not been considered in the existing literature. We employ two distinct approaches to numerically solve the constrained SVM problem. The first approach uses the projected subgradient/stochastic subgradient method, which generalizes the existing method from \cite{Pegasos}. 
Unlike existing subgradient methods, e.g.,  \cite{Pegasos}, our new algorithm works as a hybrid scheme combining both subgradient and gradient steps, and removing the projection on a compact set, while maintaining the same convergence rate as in \cite{Pegasos}.
Alternatively, based on the differentiability of the squared distance function, we propose another approach that involves solving the Lagrange dual problem and using its approximate solutions to construct an approximate optimal solution to the primal SVM problem.

The remainder of this paper is organized as follows. Section 2 recalls 
necessary tools from convex analysis that will be used in this work.  Section 
3 examines Lagrange multipliers and duality for constrained optimization 
problems under the Slater condition, specifically involving the quasi-relative 
interior and conditions for strong duality. Section 4 focuses on developing 
Lagrangian primal and dual solutions for hard-margin and soft-margin 
constrained support vector machine (SVM) problems in Hilbert spaces. It also 
presents a subgradient method using convex analysis to approximate a solution 
of the new SVM model. This method can be viewed as an illustration of our 
theoretical results. Section 4 concludes with numerical examples illustrating 
the usage of Lagrangian duality to solve constrained hard-margin and 
soft-margin SVM problems.

Throughout the paper, we use notation and concepts of convex analysis as in~\cite{bmn,bmn2022}. For simplicity of presentation, we only consider the Hilbert space setting, although several results hold in locally convex topological vector spaces. Consider a real Hilbert space $\mathcal H$ that is nontrivial, i.e., $\mathcal H\neq \{0\}$. The inner product of two elements $u, v\in \mathcal H$ is denoted by $\la u, v\ra$. For $A\subset  \mathcal H$, the cone generated by $A$ is the set $\cone(A) =\{ta\; |\; t\geq 0, a\in A\}$. The convex hull of $A$ is the set $\co(A)=\bigcap\{C\; |\; C \; \mbox{\rm is a convex set containing }A\}$, and the affine hull of $A$ is the set $\aff(A)=\bigcap\{M\; |\; M \; \mbox{\rm is an affine set containing }A\}$. The set $\oR=\R\cup \{\infty\}$ is the extended real line. The set $\R^m_+$ stands for the nonnegative orthant. The open ball (resp., closed ball) with center $a\in \mathcal H$ and radius $r>0$ is denoted by $B(a; r)$ (resp., $B^\prime(a; r)$).

\section{Mathematical Tools from Convex Analysis}\label{sec:background}

\setcounter{equation}{0}

In this section, we will review some concepts and prove necessary results in convex optimization, such as the Slater condition, Fenchel duality, and Lagrangian duality, which will be used in this paper.
The readers are referred to \cite{bc,Bert,Borwein2000,boyd-2004-convex,bmn,bmn2022,r,Zalinescu2002} for more details.


\subsection{Basic definitions and preliminaries}

A subset $\Omega$ of $\mathcal H$ is said to be convex if $\lambda x+(1-\lambda)y\in \Omega$ whenever $x, y\in \Omega$ and $0<\lambda<1$. 

Given a nonempty convex set $\Omega$ in $\mathcal H$ and $w_0\in \Omega$, the normal cone of $\Omega$ at $w_0$ is defined as
\begin{equation*}
N(w_0; \Omega)=\big\{v\in \mathcal H\; \big |\; \la v, w-w_0\ra\leq 0\ \; \mbox{\rm for all }w\in \Omega\}.
\end{equation*}
We set $N(w_0; \Omega)=\emptyset$ if $w_0\notin \Omega$.

Given an extended-real-valued function $f\colon \mathcal H\to \oR$, the effective domain of $f$ is defined by
\begin{equation*}
    \dom(f)=\big\{x\in \mathcal H\; \big |\; f(x)<\infty\big\}.
\end{equation*}
The epigraph of $f$ is the following subset of $\mathcal H\times \R$:
\begin{equation*}
    \epi(f)=\big\{(w, \lambda)\in \mathcal H\times \R\; \big | \; f(w)\leq \lambda\big\}.
\end{equation*}
Note that $\mathcal H\times \R$ is also a Hilbert space with the inner product:
\begin{equation*}
    \la (w_1, \lambda_1), (w_2, \lambda_2)\ra=\la w_1, w_2\ra+\lambda_1\lambda_2, \ \; (w_1, \lambda_1), (w_2, \lambda_2)\in \mathcal H\times \R.
\end{equation*}
We say that $f$ is a convex function if 
\begin{equation*}
    f(\lambda x+(1-\lambda)y)\leq \lambda f(x)+(1-\lambda)f(y)\ \; \mbox{\rm for all }x, y \in \mathcal H\; \mbox{\rm and }0<\lambda<1.
\end{equation*}
It follows from the definition that $f$ is a convex function if and only if $\epi(f)$ is a convex subset of $\mathcal H\times \R$.

Given a convex function $f\colon \mathcal H\to \oR$ and $w_0 \in \dom(f)$, an element $v\in \mathcal H$ is called a subgradient of $f$ at $w_0$ if
\begin{equation*}
    \la v, w-w_0\ra\leq f(w)-f(w_0)\ \; \mbox{\rm for all }w\in \mathcal H. 
\end{equation*}
The collection of all subgradients of $f$ at $w_0$ is called the subdifferential of $f$ at this point and is denoted by $\partial f(w_0)$. We set $\partial f(w_0)=\emptyset$ if $w_0\notin \dom(f)$. 

The subdifferential of a convex function $f$ at $w_0\in \dom(f)$ can be represented in terms of  the normal cone to the epigraph of $f$ by
\begin{equation}\label{SNR}
    \partial f(w_0)=\big\{v\in \mathcal H\; \big |\; (v, -1)\in N((w_0, f(w_0)); \epi(f))\big\}.
\end{equation}
Furthermore, we have the relationship:
\begin{equation*}
    N(w_0; \dom(f))=\big\{v \in \mathcal H\; \big |\; (v, 0)\in N((w_0, f(w_0)); \epi(f))\big\}.
\end{equation*}
An important example of an extended-real-valued function is the indicator function associated with a subset $\Omega$ of $\mathcal H$ given by
\begin{equation*}
    \delta(x; \Omega)=\begin{cases}0 &\mbox{\rm if }x\in \Omega, \\
    \infty &\mbox{\rm if }x\notin \Omega.        
    \end{cases}
\end{equation*}
If $\Omega$ is a nonempty convex set, we can easily show that
\begin{equation*}
    \partial \delta(w_0; \Omega)=N(w_0; \Omega)\ \; \mbox{\rm for every }w_0\in \mathcal H.
\end{equation*}

In the case where $\mathcal H$ is the Euclidean space $\R^n$, an important generalized interior notion called the relative interior of a set $\Omega$ is defined by
\begin{equation*}
    \ri(\Omega)=\big\{ x\in \Omega\; \big |\; \exists \delta>0\; \mbox{\rm such that }B(x; \delta)\cap \aff(\Omega)\subset\Omega\big\}.
\end{equation*}
This means that the relative interior of $\Omega$ is its interior within $\aff(\Omega)$, the affine hull of $\Omega$, with the subspace topology. 

It turns out that the interior and relative interior play a crucial role in developing calculus rules for normal cones to convex sets and subdifferentials in finite dimensions; see, e.g., \cite{bmn2022}.
\begin{Theorem}\label{NIR}
Let $\Omega_1$ and $\Omega_2$ be two nonempty convex sets in a Hilbert space $\mathcal H$. Suppose that one of the following conditions is satisfied:
\begin{enumerate}
    \item $\mbox{\rm int}(\Omega_1)\cap \Omega_2\neq\emptyset$,
    \item $\mathcal H=\R^n$ and $\ri(\Omega_1)\cap \ri(\Omega_2)\neq\emptyset$.
\end{enumerate}
Then we have 
\begin{equation*}
    N(w_0; \Omega_1\cap \Omega_2)=N(w_0; \Omega_1)+N(w_0; \Omega_2)\;\ \mbox{\rm for every }w_0\in \Omega_1\cap \Omega_2.
\end{equation*}
\end{Theorem}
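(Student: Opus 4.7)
The plan is to prove the two inclusions separately. The inclusion $N(w_0;\Omega_1)+N(w_0;\Omega_2)\subseteq N(w_0;\Omega_1\cap\Omega_2)$ is immediate from the definition: if $v=v_1+v_2$ with $v_i\in N(w_0;\Omega_i)$, then for every $w\in\Omega_1\cap\Omega_2$ we have $\la v,w-w_0\ra=\la v_1,w-w_0\ra+\la v_2,w-w_0\ra\leq 0$.

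For the nontrivial inclusion, fix $v\in N(w_0;\Omega_1\cap\Omega_2)$ and consider the two convex subsets of $\mathcal H\times\R$
\begin{equation*}
E_1=\big\{(y,s):y\in\Omega_1,\; s\geq\la -v,y-w_0\ra\big\},\qquad E_2=\big\{(z,r):z\in\Omega_2,\; r\leq 0\big\}.
\end{equation*}
Here $E_1$ is the epigraph of the proper convex function $y\mapsto\la -v,y-w_0\ra+\delta(y;\Omega_1)$, which under condition (a) is continuous on $\mbox{\rm int}(\Omega_1)$, so $\mbox{\rm int}(E_1)=\{(y,s):y\in\mbox{\rm int}(\Omega_1),\, s>\la -v,y-w_0\ra\}$ is nonempty. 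A short check using $v\in N(w_0;\Omega_1\cap\Omega_2)$ shows $\mbox{\rm int}(E_1)\cap E_2=\emptyset$: a common point would supply $y\in\mbox{\rm int}(\Omega_1)\cap\Omega_2\subseteq\Omega_1\cap\Omega_2$ together with $\la v,y-w_0\ra>0$, a contradiction. Case (b) is handled identically after replacing $\mbox{\rm int}$ by $\mbox{\rm ri}$ and verifying $\mbox{\rm ri}(E_1)\cap\mbox{\rm ri}(E_2)=\emptyset$ with the aid of $\mbox{\rm ri}(\Omega_1)\cap\mbox{\rm ri}(\Omega_2)\neq\emptyset$.

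Applying the Hahn--Banach separation theorem in case (a), or the proper separation theorem in $\R^n$ in case (b), yields a nonzero pair $(v^*,\alpha)\in\mathcal H\times\R$ and a scalar $\gamma$ satisfying $\la v^*,y\ra+\alpha s\geq\gamma\geq\la v^*,z\ra+\alpha r$ for all $(y,s)\in E_1$ and $(z,r)\in E_2$. Letting $s\to+\infty$ in $E_1$ and $r\to-\infty$ in $E_2$ forces $\alpha\geq 0$, while the boundary point $(w_0,0)\in E_1\cap E_2$ pins down $\gamma=\la v^*,w_0\ra$. Assuming $\alpha>0$ and normalizing to $\alpha=1$, the $E_1$ half of the separation rewrites as $\la v^*-v,y-w_0\ra\geq 0$ on $\Omega_1$, i.e.\ $v-v^*\in N(w_0;\Omega_1)$, and the $E_2$ half reads $\la v^*,z-w_0\ra\leq 0$ on $\Omega_2$, i.e.\ $v^*\in N(w_0;\Omega_2)$; hence $v=(v-v^*)+v^*$ is the desired decomposition.

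The main obstacle is ruling out the degenerate case $\alpha=0$, in which $v$ drops out of the separation and only a nonzero functional $v^*$ supporting $\Omega_1$ from one side and $\Omega_2$ from the other at $w_0$ survives. This is precisely where the qualification condition is indispensable: in case (a), any point $\bar x\in\mbox{\rm int}(\Omega_1)\cap\Omega_2$ would satisfy both $\la v^*,\bar x-w_0\ra\geq 0$ and $\la v^*,\bar x-w_0\ra\leq 0$, and the standard fact that a supporting functional at a boundary point can vanish at an interior point only when it is zero then forces $v^*=0$, contradicting nontriviality. Case (b) follows by the analogous argument with interior replaced by relative interior, since proper separation in finite dimensions delivers strict inequality on the appropriate side at any point of $\mbox{\rm ri}(\Omega_1)\cap\mbox{\rm ri}(\Omega_2)$.
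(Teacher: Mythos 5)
Your proof is correct, but there is nothing in the paper to compare it against: Theorem \ref{NIR} is recalled as a known calculus rule and attributed to \cite{bmn2022}; the paper never proves it. Your route --- separating the epigraph $E_1$ of $y\mapsto\la -v,y-w_0\ra+\delta(y;\Omega_1)$ from $E_2=\Omega_2\times(-\infty,0]$ and reading off the decomposition $v=(v-v^*)+v^*$ --- is the standard Moreau--Rockafellar-type argument, and the key steps all check out: the disjointness of $\mbox{\rm int}(E_1)$ (resp.\ $\ri(E_1)$) from $E_2$ (resp.\ $\ri(E_2)$) does follow from $v\in N(w_0;\Omega_1\cap\Omega_2)$, the signs $\alpha\geq 0$ and the value $\gamma=\la v^*,w_0\ra$ are forced as you say, and the qualification hypothesis enters exactly where it must, to exclude the vertical hyperplane $\alpha=0$. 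Two spots deserve tightening if this were written out in full. First, the separation inequality is initially obtained only on $\mbox{\rm int}(E_1)$ and must be extended to all of $E_1$ via $E_1\subset\overline{\mbox{\rm int}(E_1)}$ (valid since $E_1$ is convex with nonempty interior); you use the inequality on all of $E_1$ without saying this. Second, in case (b) the phrase that proper separation ``delivers strict inequality at any point of $\ri(\Omega_1)\cap\ri(\Omega_2)$'' is not the right invocation: the clean statement is that a linear functional bounded below on a convex set and attaining that bound at a relative interior point is constant on the set, so $\alpha=0$ would make $(v^*,\alpha)$ constant on $E_1\cup E_2$, contradicting properness of the separation. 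Neither point is a genuine gap. It is also worth noting that the paper runs the logic in the opposite direction: it takes Theorem \ref{NIR} as the primitive and derives subdifferential rules such as Theorem \ref{MaxRule} by applying it to epigraphs, whereas you derive Theorem \ref{NIR} itself by separating epigraph-shaped sets; the two viewpoints are consistent.
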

Considerable effort has been devoted to developing new concepts of generalized relative interior in infinite-dimensional spaces. Among these concepts, the notion of quasi-relative interior (see \cite{bl-qri}) is a successful one.  Let $\Omega$ be a subset of a Hilbert space $\mathcal H$. The quasi-relative interior of $\Omega$ is defined by
\begin{equation*}
    \qri(\Omega)=\big\{x\in \Omega\; \big |\; \overline{\cone(\Omega-x)}\; \mbox{\rm is a linear subspace}\big\}.
\end{equation*}
The quasi-relative interior coincides with the relative interior when 
$\dim(\mathcal H)<\infty$, whereas in infinite-dimensional settings, it often serves a role analogous to that of the relative interior.

A subset $P$ of $\mathcal H$ is called a polyhedral convex set if there exist $v_i\in \mathcal H$ and $b_i\in \R$ for $i=1, \ldots, m$ such that
\begin{equation*}
    P=\big\{w\in \mathcal H\; \big |\; \la v_i, x\ra\leq b_i\; \ \mbox{\rm for all }i=1, \ldots, m\big\}.
\end{equation*}
The quasi-relative interior can be used to obtain an improved version of Theorem \ref{NIR} in the case where one of the sets under consideration is polyhedral; see \cite[Theorem~3.87]{bmn2022}. 
\begin{Theorem}\label{PNIR} Let $\Omega$ be a convex set, and let $P$ be a  polyhedral convex set in a Hilbert space $\mathcal H$ such that $\qri(\Omega)\cap P\neq\emptyset$. Then
\begin{equation*}
    N(w_0; \Omega\cap P)=N(w_0; \Omega)+N(w_0; P)\ \; \mbox{\rm for every }w_0\in \Omega\cap P.
\end{equation*}
\end{Theorem}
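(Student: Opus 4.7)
The inclusion $N(w_0;\Omega)+N(w_0;P)\subseteq N(w_0;\Omega\cap P)$ follows directly from the definitions: for $v_i$ in the respective normal cones and $w\in\Omega\cap P$, $\langle v_1+v_2,w-w_0\rangle\leq 0$. The content is the reverse inclusion. My plan is to reduce it to a local form and then argue by a separation that leverages the quasi-relative interior hypothesis. First, translate to assume $w_0=0$. Writing $P=\bigcap_{i=1}^m\{w:\langle v_i,w\rangle\leq b_i\}$ with active set $I=\{i:b_i=0\}$, the inactive constraints are strict near $0$, so
\[
N(0;\Omega\cap P)=N(0;\Omega\cap K),\quad K:=\{w:\langle v_i,w\rangle\leq 0,\ i\in I\}.
\]
A polyhedral (Farkas-type) calculation gives $N(0;P)=N(0;K)=\bigl\{\sum_{i\in I}\lambda_i v_i:\lambda_i\geq 0\bigr\}$, a finitely generated (hence closed) convex cone, and $P\subseteq K$ preserves $\qri(\Omega)\cap K\neq\emptyset$. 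The problem reduces to establishing $N(0;\Omega\cap K)\subseteq N(0;\Omega)+N(0;K)$.

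For this reduced claim I would argue by contradiction. Fix $v\in N(0;\Omega\cap K)$ and suppose $v\notin N(0;\Omega)+N(0;K)$, so that the convex set $v-N(0;\Omega)$ is disjoint from the closed polyhedral cone $N(0;K)$. A Hahn-Banach separation yields $z\in\mathcal H\setminus\{0\}$ with
\[
\sup_{u\in N(0;\Omega)}\langle z,v-u\rangle\ \leq\ \inf_{y\in N(0;K)}\langle z,y\rangle,
\]
strict at the pair $(v,0)$. Homogeneity of $N(0;K)$ together with finiteness of the infimum forces it to equal $0$ and $z\in N(0;K)^\circ=K$ (bipolar), while homogeneity of $N(0;\Omega)$ and finiteness of the supremum forces $z\in N(0;\Omega)^\circ=\overline{\cone(\Omega)}$ (bipolar, using $0\in\Omega$) and $\langle z,v\rangle>0$. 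Thus $z\in K\cap\overline{\cone(\Omega)}$ with $\langle z,v\rangle>0$; approximating $z$ by positive multiples of elements in $\Omega\cap K$ and combining with $v\in N(0;\Omega\cap K)$ would force $\langle z,v\rangle\leq 0$, the desired contradiction.

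The central obstacle is precisely this last approximation: a point of $\overline{\cone(\Omega)}\cap K$ need not be a limit of positive multiples of points in $\Omega\cap K$, since $\Omega$ could be pinched against the boundary of $K$ and the separating $z$ could exploit this degeneracy. This is exactly where the quasi-relative interior hypothesis is indispensable. Choosing $\bar w\in\qri(\Omega)\cap K$, the defining property that $\overline{\cone(\Omega-\bar w)}$ is a linear subspace, equivalently that $N(\bar w;\Omega)$ is a linear subspace and hence admits no proper supporting functional of $\Omega$ at $\bar w$, forbids the existence of a separating $z$ of the type produced above and pushes the approximation through. A secondary technical point is that $N(0;\Omega)+N(0;K)$ must be closed for non-membership to be equivalent to separation; this is secured by polyhedrality of $N(0;K)$ via the classical lemma that the sum of a closed convex cone and a finitely generated cone is closed, completing the argument.
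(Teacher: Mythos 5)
First, a point of comparison: the paper does not prove this statement at all --- it is quoted as a known result, citing Theorem~3.87 of \cite{bmn2022} --- so there is no in-paper argument to measure yours against; your proposal has to stand on its own. Its overall shape is reasonable (reduce to the active constraints, dispose of the easy inclusion, and obtain the reverse inclusion by a separation argument in which the quasi-relative interior hypothesis rules out the offending separating functional), but two steps are genuinely broken. The ``classical lemma'' you invoke --- that the sum of a closed convex cone and a finitely generated cone is closed --- is false. Take $K=\{(x,y,z)\in\R^3:\ z\geq\sqrt{x^2+y^2}\}$ and the polyhedral ray $F=\{t(0,1,-1):\ t\geq 0\}$; then $(1,0,\sqrt{1+n^2}-n)\in K+F$ for every $n$, so $(1,0,0)\in\overline{K+F}$, yet $(1,0,0)\in K+F$ would require $t\geq\sqrt{1+t^2}$, which is impossible. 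Closedness of such a sum needs an extra condition (e.g.\ Dieudonn\'e's criterion that $K\cap(-F)$ be a linear subspace), and in the present setting the closedness of $N(0;\Omega)+N(0;K)$ is essentially equivalent to the conclusion of the theorem, so it cannot be taken as an input to the separation step. Relatedly, in an infinite-dimensional Hilbert space the Hahn--Banach separation of two disjoint closed convex sets neither of which has interior points, with strictness at a designated pair, is not automatic; you would need a proper separation theorem whose hypotheses (a relative-interior or quasi-relative-interior condition on $v-N(0;\Omega)$ and $N(0;K)$) must themselves be verified.

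The more fundamental gap is that the one place where the quasi-relative interior hypothesis must act --- upgrading $z\in K\cap\overline{\cone(\Omega)}$ with $\la z,v\ra>0$ to a contradiction with $v\in N(0;\Omega\cap K)$ --- is described but not proved. You correctly identify that a point of $\overline{\cone(\Omega)}\cap K$ need not be a limit of positive multiples of points of $\Omega\cap K$, and you assert that choosing $\bar w\in\qri(\Omega)\cap K$ ``pushes the approximation through,'' but no argument is given for how the subspace property of $\overline{\cone(\Omega-\bar w)}$ interacts with the finitely many active constraints defining $K$ to produce the required approximants. That passage is precisely the content of the theorem (it is what distinguishes the polyhedral intersection rule from the general statement, which is false without an interiority condition), so leaving it at the level of intention means the proof is not complete. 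As written, the proposal is an accurate map of where the difficulty lies rather than a proof; the missing approximation argument is exactly what the cited Theorem~3.87 of \cite{bmn2022} supplies.
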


To continue, we recall the well-known definitions of distance function and Euclidean projection in a Hilbert space $\mathcal H$. Let $\Omega$ be a nonempty closed convex set in a Hilbert space. Given $x\in \mathcal H$, define the distance from $x$ to $\Omega$ by
\begin{equation*}
    d(x; \Omega)=\inf\big\{\|x-w\|\; \big |\; w\in \Omega\big\}
\end{equation*}
and the Euclidean projection from $x$ to $\Omega$ by
\begin{equation*}
    \mathcal P(x; \Omega)=\big\{w\in \Omega\; \big|\; d(x; \Omega)=\|x-w\|\big\} = \mathrm{arg}\min\big\{ \|x - w\| \mid w \in \Omega\big\}.
\end{equation*}
It is well-known that the distance function \(d(\cdot; \Omega)\) is nonexpansive and convex, and that \(\mathcal{P}(x; \Omega)\) is a singleton in this setting.

Finally, we recall a useful result on the Fr\'echet differentiability of the squared distance function. 

\begin{Theorem} Let $\Omega$ be a nonempty closed convex set in a Hilbert space $\mathcal H$. Define the function $\ph(\cdot) = \frac{1}{2} \big(d(\cdot; \Omega)\big)^2$ on $\mathcal H$. Then $\ph$ is Fr\'echet differentiable and its derivative is 
\begin{equation*}
    \nabla \ph(w)= w-\mathcal P(w; \Omega) \ \; \mbox{\rm for every }w\in \mathcal H.
\end{equation*}
\end{Theorem}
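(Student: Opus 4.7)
The plan is a direct verification of the definition of Fréchet differentiability, built on two elementary facts about the metric projection onto a nonempty closed convex set in a Hilbert space: (i) the variational characterization $\langle w - \mathcal{P}(w;\Omega), z - \mathcal{P}(w;\Omega)\rangle \leq 0$ for every $z \in \Omega$, which simply says $w - \mathcal{P}(w;\Omega) \in N(\mathcal{P}(w;\Omega);\Omega)$; and (ii) the consequent nonexpansiveness $\|\mathcal{P}(u;\Omega) - \mathcal{P}(v;\Omega)\| \leq \|u-v\|$. Fix $w \in \mathcal H$ and write $p_w := \mathcal P(w;\Omega)$, $p_{w+h} := \mathcal P(w+h;\Omega)$; I must show
\begin{equation*}
R(h) := \varphi(w+h) - \varphi(w) - \langle w - p_w, h\rangle = o(\|h\|) \quad \text{as } h \to 0.
\end{equation*}

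The core of the argument is a two-sided sandwich obtained purely from the definition of $\varphi$ as an infimum. For the upper bound, since $p_w\in\Omega$, one has $\varphi(w+h) \leq \tfrac{1}{2}\|w+h - p_w\|^2$; expanding the square and subtracting $\varphi(w) = \tfrac{1}{2}\|w - p_w\|^2$ yields $\varphi(w+h)-\varphi(w) \leq \langle w - p_w,h\rangle + \tfrac{1}{2}\|h\|^2$. For the lower bound, since $p_{w+h}\in\Omega$, one has $\varphi(w) \leq \tfrac{1}{2}\|w - p_{w+h}\|^2$; expanding $\tfrac{1}{2}\|(w+h)-p_{w+h}\|^2 - \tfrac{1}{2}\|w - p_{w+h}\|^2$ gives $\varphi(w+h)-\varphi(w) \geq \langle w - p_{w+h},h\rangle + \tfrac{1}{2}\|h\|^2$. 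Subtracting the candidate linear term $\langle w - p_w,h\rangle$ from both bounds collapses them to
\begin{equation*}
\langle p_w - p_{w+h},\, h\rangle + \tfrac{1}{2}\|h\|^2 \;\leq\; R(h) \;\leq\; \tfrac{1}{2}\|h\|^2,
\end{equation*}
and Cauchy--Schwarz together with the nonexpansiveness of the projection gives $|\langle p_w - p_{w+h},h\rangle| \leq \|h\|^2$, whence $|R(h)| \leq \tfrac{3}{2}\|h\|^2 = o(\|h\|)$. This proves Fréchet differentiability at $w$ with the asserted gradient.

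The only nontrivial ingredient is the nonexpansiveness of the metric projection. Without it, the left-hand side of the sandwich cannot be controlled in $\|h\|$, and one would obtain at best a Gâteaux-type conclusion rather than a uniform Fréchet estimate. Nonexpansiveness itself, however, is a standard consequence of Hilbert-space geometry: it follows by adding the variational inequality at $u$ (with $z = p_v$) to the one at $v$ (with $z = p_u$) and applying Cauchy--Schwarz. Everything else in the proof is routine expansion of inner-product squares and the defining property of $\mathcal{P}(\cdot;\Omega)$ as an argmin.
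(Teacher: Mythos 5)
Your proof is correct and complete. The paper itself states this theorem without proof, recalling it as a known fact, so there is no argument in the text to compare against; your two-sided sandwich — bounding $\varphi(w+h)-\varphi(w)$ from above by testing the infimum at $\mathcal P(w;\Omega)$ and from below by testing it at $\mathcal P(w+h;\Omega)$, then invoking nonexpansiveness of the projection to control the remainder by a constant times $\|h\|^2$ — is the standard and most economical route, and you correctly identify nonexpansiveness as the one ingredient that upgrades the conclusion from G\^ateaux to Fr\'echet. One cosmetic remark: your sandwich actually yields $|R(h)|\le\tfrac{1}{2}\|h\|^2$ (the lower bound is $-\|h\|^2+\tfrac{1}{2}\|h\|^2=-\tfrac{1}{2}\|h\|^2$), so the stated constant $\tfrac{3}{2}$ is valid but not tight; this does not affect the argument.
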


\subsection{Subgradients of maximum functions}

In this subsection, we obtain an upper estimate for the subdifferential of the maximum of two convex functions, one of which is not necessarily continuous. This upper estimate will be used to study the method of Lagrange multipliers for \eqref{eq:cvx_opt} as well as for \eqref{Q2}.

The lemma below states a simple, known result, which is useful for proving the subsequent theorem. 

\begin{Lemma}\label{Nepi} Let $f\colon \mathcal H\to \oR$ be a proper convex function. Fix any $w_0\in \dom(f)$ and $\lambda_0\in \R$ such that $\lambda_0\geq f(w_0)$. Then
\begin{equation}\label{epiinc}
    N((w_0, \lambda_0); \epi(f))\subset N((w_0, f(w_0)); \epi(f)). 
\end{equation}
\end{Lemma}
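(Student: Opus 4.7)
The plan is to take an arbitrary element $(v,\mu)\in N((w_0,\lambda_0);\epi(f))$ and show directly that it satisfies the defining inequality for membership in $N((w_0,f(w_0));\epi(f))$. The starting observation is that the hypothesis $\lambda_0\ge f(w_0)$ guarantees $(w_0,\lambda_0)\in\epi(f)$, so the left-hand side of \eqref{epiinc} is a well-defined convex cone.

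First I would extract the sign of the scalar component $\mu$. Since $\epi(f)$ contains the vertical ray $\{(w_0,t) : t\ge \lambda_0\}$ (as $f(w_0)\le\lambda_0\le t$), testing the normal cone inequality
\begin{equation*}
\langle v, w-w_0\rangle + \mu(\lambda-\lambda_0)\le 0\ \ \text{for all } (w,\lambda)\in \epi(f)
\end{equation*}
against these points gives $\mu(t-\lambda_0)\le 0$ for every $t\ge\lambda_0$, hence $\mu\le 0$.

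Next I would translate the inequality. For an arbitrary $(w,\lambda)\in\epi(f)$, write $\lambda - f(w_0) = (\lambda-\lambda_0) + (\lambda_0-f(w_0))$. Because $\mu\le 0$ and $\lambda_0 - f(w_0)\ge 0$, the extra term contributes $\mu(\lambda_0-f(w_0))\le 0$, so
\begin{equation*}
\langle v, w-w_0\rangle + \mu(\lambda - f(w_0)) \le \langle v, w-w_0\rangle + \mu(\lambda-\lambda_0)\le 0.
\end{equation*}
This is exactly the condition $(v,\mu)\in N((w_0,f(w_0));\epi(f))$, yielding the desired inclusion.

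There is no real obstacle here; the result is essentially a monotonicity property of epigraphical normals in the vertical direction. The only subtle point worth stating explicitly is the non-positivity of the $\mu$-component, since without it the inequality $\mu(\lambda_0-f(w_0))\le 0$ would fail, and that step relies crucially on the epigraph being unbounded above in the $\lambda$ variable at the base point $w_0\in\dom(f)$.
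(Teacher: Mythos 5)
Your proof is correct and follows essentially the same route as the paper: both arguments first test the normal cone inequality against points vertically above $(w_0,\lambda_0)$ in the epigraph to conclude the scalar component is non-positive (the paper writes it as $-\gamma$ with $\gamma\ge 0$), and then use $\lambda_0\ge f(w_0)$ to shift the base point down to $(w_0,f(w_0))$. The only cosmetic difference is that you test against the whole vertical ray while the paper uses the single point $(w_0,\lambda_0+1)$.
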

\begin{proof} 
Take any $(v, -\gamma)\in N((w_0, \lambda_0); \epi(f))$. By the definition of normal cone, we have
\begin{equation*}
    \la v, w-w_0\ra-\gamma(\lambda-\lambda_0)\leq 0\; \ \mbox{\rm for all }(w, \lambda)\in \epi(f). 
\end{equation*}
Using this inequality with $w=w_0$ and $\lambda=\lambda_0+1$ yields $\gamma\geq 0$.  Since $\lambda_0\geq f(w_0)$, we have
\begin{equation*}
  \la v, w-w_0\ra-\gamma(\lambda-f(w_0)) \leq  \la v, w-w_0\ra-\gamma(\lambda-\lambda_0)\leq 0\; \ \mbox{\rm for all }(w, \lambda)\in \epi(f).   
\end{equation*}
This inequality shows that $(v, -\gamma)\in N((w_0, f(w_0)); \epi(f))$, which implies \eqref{epiinc}.
\end{proof}

The theorem below provides an upper estimate for the subdifferential of the maximum of two convex functions. Unlike the existing result in \cite[Theorem~3.59]{bmn2022}, our approach requires the continuity of only one of the underlying functions.

\begin{Theorem}\label{MaxRule} Let $g\colon \mathcal H\to \R$ be a continuous convex function, and let $h\colon \mathcal H\to \oR$ be a proper convex function. For $w\in\mathcal{H}$, define
\begin{equation*}
    f(w)=\max\{g(w), h(w)\},
\end{equation*}
and suppose that $\partial h(w_0)\neq\emptyset$. Then, we always have
\begin{equation}\label{maxus}
\partial f(w_0)\subset \mbox{\rm co}\big(\partial g(w_0)\cup \partial h(w_0)\big)+N(w_0; \dom(h)).
\end{equation}
The reverse inclusion holds if $g(w_0)=h(w_0)$. 
\end{Theorem}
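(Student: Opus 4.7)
The plan is to pass to epigraphs and apply the sum rule for normal cones. Observe that
\[
\epi(f) = \epi(g) \cap \epi(h),
\]
because $\max\{g(w),h(w)\} \leq \lambda$ iff $g(w)\leq \lambda$ and $h(w)\leq \lambda$. Given $v\in\partial f(w_0)$, formula \eqref{SNR} says $(v,-1)\in N((w_0,f(w_0));\epi(f))$. I would then invoke Theorem~\ref{NIR}(a) on the intersection $\epi(g)\cap\epi(h)$. To check the interior qualification, note that continuity of the real-valued $g$ gives $\sint(\epi(g))=\{(w,\lambda):g(w)<\lambda\}$; picking any $w\in\dom(h)$ (which is nonempty since $\partial h(w_0)\neq\emptyset$ forces $w_0\in\dom(h)$) and any $\lambda>\max\{g(w),h(w)\}$ produces a point in $\sint(\epi(g))\cap\epi(h)$. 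Thus
\[
(v,-1)=(v_1,-\gamma_1)+(v_2,-\gamma_2),
\]
with $(v_1,-\gamma_1)\in N((w_0,f(w_0));\epi(g))$ and $(v_2,-\gamma_2)\in N((w_0,f(w_0));\epi(h))$, and $\gamma_1+\gamma_2=1$.

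Next I would normalize the heights. If $g(w_0)<f(w_0)$ then $(w_0,f(w_0))\in\sint(\epi(g))$, forcing $(v_1,-\gamma_1)=(0,0)$; otherwise $g(w_0)=f(w_0)$ and the first term is already anchored at the graph point. For the $h$-component, if $h(w_0)<f(w_0)$ I would apply Lemma~\ref{Nepi} to deduce $(v_2,-\gamma_2)\in N((w_0,h(w_0));\epi(h))$; if $h(w_0)=f(w_0)$ this is automatic. After these reductions, the nonzero $(v_i,-\gamma_i)$ lie in normal cones at the actual graph points, where the standard description applies: $\gamma_i\geq 0$, and if $\gamma_i>0$ then $v_i/\gamma_i\in\partial g(w_0)$ or $\partial h(w_0)$ respectively, while if $\gamma_i=0$ then $v_i\in N(w_0;\dom g)$ or $N(w_0;\dom h)$.

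I would then split into cases on $(\gamma_1,\gamma_2)$. When both $\gamma_1,\gamma_2>0$, the identity $v=\gamma_1(v_1/\gamma_1)+\gamma_2(v_2/\gamma_2)$ expresses $v$ as a convex combination from $\partial g(w_0)\cup\partial h(w_0)$. When $\gamma_1=0,\gamma_2=1$, continuity of $g$ gives $\dom g=\mathcal H$, hence $v_1\in N(w_0;\mathcal H)=\{0\}$ and $v=v_2\in\partial h(w_0)$. When $\gamma_1=1,\gamma_2=0$, we have $v_1\in\partial g(w_0)$ and $v_2\in N(w_0;\dom h)$, giving $v\in\partial g(w_0)+N(w_0;\dom h)$. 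In every case $v$ lies in $\co(\partial g(w_0)\cup\partial h(w_0))+N(w_0;\dom h)$, yielding \eqref{maxus}.

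The main obstacle is the bookkeeping in case~$\gamma_1=1,\gamma_2=0$: the horizon direction $v_2\in N(w_0;\dom h)$ is precisely why the additive term $N(w_0;\dom h)$ cannot be dropped, and this is the feature that distinguishes the statement from the classical result requiring continuity of both functions. Finally, for the reverse inclusion under $g(w_0)=h(w_0)$, I would verify directly from the definition of subgradient that any convex combination $\lambda u_1+(1-\lambda)u_2$ with $u_1\in\partial g(w_0)$, $u_2\in\partial h(w_0)$ satisfies the subgradient inequality for $f$ (using $f\geq g$ and $f\geq h$ together with $g(w_0)=h(w_0)=f(w_0)$), and that elements of $N(w_0;\dom h)$ can be added since $\dom f\subset\dom h$.
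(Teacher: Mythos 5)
Your proposal is correct and follows essentially the same route as the paper's proof: the epigraph identity $\epi(f)=\epi(g)\cap\epi(h)$, the interior qualification supplied by continuity of $g$, the normal cone intersection rule of Theorem~\ref{NIR} combined with Lemma~\ref{Nepi} to shift the normal cones down to the graph points, and the case analysis on the multipliers $\gamma_1,\gamma_2$, with the term $N(w_0;\dom h)$ arising exactly from the case $\gamma_2=0$. The only cosmetic difference is that you dispose of the $g$-component via the observation that $(w_0,f(w_0))$ is an interior point of $\epi(g)$ when $g(w_0)<f(w_0)$, where the paper instead applies Lemma~\ref{Nepi} to both epigraphs uniformly and then uses continuity of $g$ to force $v_1=0$ when $\gamma_1=0$.
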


\begin{proof} 
First, it is obvious that
\begin{equation*}
    \epi(f)=\epi(g)\cap \epi(h).
\end{equation*}
Next, since $g$ is continuous, by  \cite[Corollary~2.145]{bmn2022} or a direct proof, we have
\begin{equation*}
    \mbox{\rm int}(\epi(g))=\big\{(x, \lambda)\in \mathcal H\times \R\; \big |\; g(x)<\lambda\big\}.
\end{equation*}
It follows that
\begin{equation*}
    \mbox{\rm int}(\epi(g))\cap \epi(h)\neq \emptyset
\end{equation*}
because if $\lambda_0>\max\{g(w_0), h(w_0)\}$, then $(w_0, \lambda_0)$ belongs to this intersection. 

Now, since $f(w_0)\geq g(w_0)$ and $f(w_0)\geq h(w_0)$,  we can use Theorem \ref{NIR} and Lemma \ref{Nepi} to obtain
\begin{equation*}
\begin{aligned}
    N((w_0, f(w_0)); \epi(f))&=N((w_0, f(w_0)); \epi(g))+N((w_0, f(w_0)); \epi(h))\\
    &\subset N((w_0, g(w_0)); \epi(g))+N((w_0, h(w_0)); \epi(h)).
    \end{aligned}
\end{equation*}
For  any $v\in \partial f(w_0)$, by \eqref{SNR}, we get  $(v, -1)\in N((w_0, f(w_0)); \epi(f))$. Thus we have the representation
\begin{equation*}
    (v, -1)=(v_1, -\gamma_1)+(v_2, -\gamma_2),
\end{equation*}
where $(v_1, -\gamma_1)\in N((w_0, g(w_0)); \epi(g))$ and $(v_2, -\gamma_2)\in N((w_0, h(w_0)); \epi(h))$. We can see that $\gamma_i\geq 0$ for $i=1, 2$ as in the proof of Lemma \ref{Nepi}. If $\gamma_1>0$, then $(v_1/\gamma_1, -1)\in N((w_0, g(w_0)); \epi(g))$, so $v_1\in \gamma_1\partial g(w_0)$ by \eqref{SNR}.  By the continuity of $g$,  this inclusion also holds if $\gamma_1=0$; see \cite[Corollary 3.38]{bmn2022} because $v_1=0$ in this case. 
If $\gamma_2>0$, then $v_2\in \gamma_2\partial h(w_0)$, and if $\gamma_2=0$, then $v_2\in N(w_0; \dom(h))$ by \cite[Theorem~3.37]{bmn2022}. Thus, we always have
\begin{equation*}
    v_2\in \gamma_2\partial h(w_0)+N(w_0; \dom(h))
\end{equation*}
 because $\partial h(w_0)\neq\emptyset$ and it is obvious that $0\in N(w_0; \dom(h))$.   Since $\gamma_1+\gamma_2=1$, we conclude that
 \begin{equation*}
     v\in \gamma_1\partial g(w_0)+\gamma_2\partial h(w_0)+N(w_0; \dom(h))\subset \mbox{\rm co}\big(\partial g(w_0)\cup \partial h(w_0)\big)+N(w_0; \dom h),
 \end{equation*}
 which justifies \eqref{maxus}.
 
 Finally, the proof of the reverse inclusion under the assumption  $g(w_0)=h(w_0)$ is straightforward and is left for the reader. 
\end{proof}

\section{Lagrange Multipliers and  Duality via Quasi-Relative Interiors}

\setcounter{equation}{0}

\subsection{Problem statement and motivation}\label{subsec:prob_stat}
Let $\mathcal{H}$ be a Hilbert space. 
We are interested in the following convex optimization problem:
\begin{equation}\label{eq:cvx_opt}
\left\{\begin{array}{ll}
{\displaystyle\min_{ w \in \mathcal{H}}} & \phi(w) = f(w) + h(w), \vspace{1ex}\\
\mathrm{s.t.} &  g_i(w) \leq 0\; \mbox{\rm for }i=1, \ldots,m,
\end{array}\right.
\end{equation}
where $f \colon  \mathcal{H} \to \mathbb{R}$ is a continuous convex function, $h \colon  \mathcal{H}  \to \oR$ is a proper convex function, and $g_i \colon {\mathcal H} \to \R$ are continuous convex functions. 
Here, we assume that $h$ is a proper convex function to cover different settings such as regularizers, penalty terms, and simple constraints. 
The term $f$ is usually used to capture a data fidelity term or a loss function for concrete applications in machine learning, statistical learning, and data science. 

Alternatively, we also consider a variation of \eqref{eq:cvx_opt} of the following form:
\begin{equation}\label{Q2}
\left\{\begin{array}{ll}
{\displaystyle\min_{ w \in \mathcal{H}}} &f(w), \vspace{1ex}\\
\mathrm{s.t.} &  g_i(w) \leq 0\; \mbox{\rm for }i=1, \ldots,m, \vspace{1ex}\\
&\mathcal Aw=b, \vspace{1ex}\\
&w\in \Theta,
\end{array}\right.
\end{equation}
where $\mathcal{A}\colon \mathcal H\to \R^q$ is a continuous linear mapping, 
$b\in\R^q$, and $\Theta\subset \mathcal H$ is a nonempty convex set. Clearly, 
if $h$ is the indicator function for $\Theta$ in~\eqref{eq:cvx_opt}, $h(w) = 
\delta(w;\Theta)$, then we obtain \eqref{Q2} without the linear constraint.

In \eqref{eq:cvx_opt}, let us denote 
\begin{equation*}
\mathcal{F} = \big\{ w \in \dom{(\phi)}\; \big |\;  g_i(w) \leq 0\; \mbox{\rm for }i=1, \ldots, m\big\},
\end{equation*}
which is assumed to be nonempty. Then, we can rewrite \eqref{eq:cvx_opt} in the following compact form:
\begin{equation}\label{eq:cvx_opt1}
\min_{w \in \mathcal{F} } \Big\{ \phi(w) = f(w) + h(w) \Big\}.
\end{equation}
Now, let us consider some common special cases of \eqref{eq:cvx_opt}, or equivalently, \eqref{eq:cvx_opt1}.
\begin{itemize}
\item \textbf{Composite convex optimization:}
If $g_i$ for $i=1, \ldots, m$ are absent, i.e., $\mathcal{F} = \dom{(\phi)}$, then \eqref{eq:cvx_opt} reduces to $\min_w \big\{ \phi(w) = f(w) + h(w) \big\}$, which is known as composite convex minimization.

\item \textbf{Classical convex optimization model:}
If $h(w) = \delta(\cdot; \Theta)$, the indicator function for a nonempty closed and convex set $\Theta$ in $\mathcal{H}$, then \eqref{eq:cvx_opt} reduces to the classical convex optimization problem \eqref{Q2} as in various textbooks. 
This model also covers convex cone or conic programs such as linear programming, convex quadratic programming, second-order cone programming, and semi-definite programming.
\end{itemize}

Our motivation in the subsequent work comes from the support vector machine and its extensions. The support vector machine (SVM) is a classical technique in supervised machine learning, introduced by Vapnik and Chervonenkis in 1964 \cite{svm64}, which presents as one of the most fundamental methods for binary classification. 
The details of this model will be discussed in Section~\ref{svma}. 
Here, we simply write down the mathematical form of the classical soft-margin SVM problem as an optimization model:
\begin{equation}\label{eq:SVM1}
\min_{w \in \mathcal H, \, b \in \mathbb{R}} \Big\{ \phi(w, b) = \frac{1}{2}\Vert w \Vert^2 + \frac{C}{m}\sum_{i=1}^m \ell( y_i(\la \mathbf x_i, w\ra + b) ) \Big\},
\end{equation}
where $\{ (\mathbf x_i, y_i) \}_{i=1}^m$ is a given data set such that $\mathbf x_i$ is an example of the feature vector $x$ and $y_i \in \{-1, 1\}$ is the associated label,  $\ell(\tau) = \max\{0, 1 - \tau\}$, $\tau\in \R$, is the Hinge loss function, and $C > 0$ is a given penalty parameter.
Though this model is standard, it may exclude some applications where additional conditions on the weight vector $w$ and the bias term $b$ are required.
For instance, a constraint of the form $w \in \Theta$ is required, or a regularizer $\lambda R(w)$ is added to the model.
These generalizations make the classical theory and algorithms for SVM inapplicable to the target applications.
This work specifically targets the constrained problem as one example.

Another extension using matrices rather than vectors is known as support matrix machine (SMM) problems, which have been presented in several works, including \cite{luo2015}.
For instance, in \cite{kumari2025,luo2015,pan2022,pirsiavash2009}, the authors essentially studied the following problem:
\begin{equation*}\label{eq:SMM1}
\min_{W \in \mathbb{R}^{p\times q},\, b \in \mathbb{R}} \Big\{ \Phi(W, b) = \frac{1}{2}\mathrm{tr}(W^{\top} W) + \Vert W \Vert_{*} + \frac{C}{N}\sum_{i=1}^N \ell( y_i(X_i^{\top}W + b) ) \Big\},
\end{equation*}
where $\mathrm{tr}(\cdot)$ is the trace operator and $\Vert \cdot\Vert_{*}$ denotes the nuclear norm (i.e., the sum of singular values).
Clearly, this extension does not reduce to \eqref{eq:SVM1} when $q = 1$ due to the nuclear norm regularizer.

\subsection{Slater conditions and the method of Lagrange multipliers}

Our main goal in this subsection is to study the method of Lagrange multipliers for \eqref{eq:cvx_opt} and its refinement to the case of \eqref{Q2}. For simplicity, we present our results in Hilbert spaces, although these results can be easily generalized to the case of locally convex topological vector spaces. 

The lemma below is simple but useful in what follows. We provide the proof for the convenience of the reader. 

\begin{Lemma}\label{subE} 
Let $h\colon \mathcal H\to \oR$ be a proper convex function. For any $w_0\in \dom(h)$, we have
\begin{equation*}
    \partial h(w_0)=\partial h(w_0)+N(w_0; \dom(h)). 
\end{equation*}
\end{Lemma}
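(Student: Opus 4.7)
The plan is to prove the two inclusions separately, with the forward inclusion being trivial and the reverse inclusion following from splitting the defining inequality of the subgradient into the cases $w \in \dom(h)$ and $w \notin \dom(h)$.

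First I would note that the inclusion $\partial h(w_0) \subset \partial h(w_0) + N(w_0; \dom(h))$ is immediate because $0 \in N(w_0; \dom(h))$ (the origin always lies in any normal cone at a point of the set).

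For the reverse inclusion, I would take an arbitrary element $v \in \partial h(w_0) + N(w_0; \dom(h))$ and write $v = v_1 + v_2$ with $v_1 \in \partial h(w_0)$ and $v_2 \in N(w_0; \dom(h))$. The goal is to verify that $\langle v, w - w_0 \rangle \leq h(w) - h(w_0)$ for all $w \in \mathcal{H}$. I would split into two cases. If $w \notin \dom(h)$, then $h(w) = \infty$ so the inequality is automatic. If $w \in \dom(h)$, the subgradient inequality for $v_1$ gives $\langle v_1, w - w_0 \rangle \leq h(w) - h(w_0)$, and the normal cone inequality for $v_2$ gives $\langle v_2, w - w_0 \rangle \leq 0$; adding these two produces the required bound.

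I do not anticipate any real obstacle here: the result is essentially a direct unpacking of definitions, and the key observation is simply that subgradient inequalities for proper convex functions are vacuous outside of $\dom(h)$, so enlarging $v_1$ by a vector from $N(w_0; \dom(h))$ never violates the subgradient inequality. No appeal to any of the more substantial tools developed earlier in the section (Theorems \ref{NIR}, \ref{PNIR}, or \ref{MaxRule}) is needed.
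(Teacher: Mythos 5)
Your proposal is correct and follows essentially the same route as the paper's proof: both establish the trivial forward inclusion via $0\in N(w_0;\dom(h))$ and then verify the subgradient inequality for $v=v_1+v_2$ by combining the subgradient inequality for $v_1$ with the normal cone inequality for $v_2$ on $\dom(h)$, noting the inequality is vacuous when $h(w)=\infty$. No gaps.
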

\begin{proof} Since $0\in N(w_0; \dom(h))$, it is obvious that
\begin{equation*}
    \partial h(w_0)\subset \partial h(w_0)+N(w_0; \dom(h)).
\end{equation*}
Conversely, take any $v\in \partial h(w_0)+N(w_0; \dom(h))$. Then we have the representation
\begin{equation*}
    v=v_1+v_2,
\end{equation*}
where $v_1\in \partial h(w_0)$ and $v_2\in N(w_0; \dom(h))$. 
Take any $w\in \dom(h)$ and get
\begin{equation}\label{cvs}
\begin{aligned}
    \la v, w-w_0\ra&=\la v_1, w-w_0\ra+\la v_2, w-w_0\ra \vspace{1ex}\\
    &\leq h(w)-h(w_0)
    \end{aligned}
\end{equation}
because $\la v_2, w-w_0\ra\leq 0$. Since \eqref{cvs} also holds if $w\notin \dom(h)$, i.e. $h(w)=\infty$, we see that $v\in \partial h(w_0)$, which completes the proof. 
\end{proof}

The theorem below provides an enhanced version of the method of Lagrange multipliers for convex optimization problems. This version applies to cases where the objective function can be represented as the sum of a continuous convex function and a subdifferentiable convex function. 

\begin{Theorem}\label{KKT} 
Consider the convex optimization problem \eqref{eq:cvx_opt} in a Hilbert space $\mathcal{H}$.
Let $w_0$ be a feasible solution to this problem such that $\partial h(w_0)\neq\emptyset$. Suppose that the Slater condition is satisfied for this problem, i.e., there exists $u\in \mathcal{H}$ such that 
\begin{enumerate}
   \item $u\in \dom(h)$,
   \item $g_i(u)<0$ for $i=1, \ldots, m$.
\end{enumerate}
Then, $w_0$ is an optimal solution to \eqref{eq:cvx_opt} if and only if there exist Lagrange multipliers $\lambda_i\geq 0$ for $i=1, \ldots, m$ such that
\begin{equation}\label{KKTC}
    0\in \partial f(w_0)+\partial h(w_0)+\sum_{i=1}^m \lambda_i\partial g_i(w_0),
\end{equation}
and $\lambda_i g_i(w_0)=0$ for $i=1, \ldots, m$.
\end{Theorem}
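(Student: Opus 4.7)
The plan is to prove the biconditional in two directions separately: sufficiency by a short convexity computation, and necessity by converting the inequality constraints into an indicator, invoking a subdifferential sum rule, and then representing the resulting normal cone through the $\partial g_i(w_0)$'s.

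For sufficiency, I read the third summand in \eqref{KKTC} as $\sum_{i=1}^m \lambda_i\partial g_i(w_0)$ (the printed formula is almost certainly a typographical omission of $\partial$). Under this reading, I select $v\in\partial f(w_0)$, $u\in\partial h(w_0)$, and $v_i\in\partial g_i(w_0)$ with $v+u+\sum_i\lambda_i v_i = 0$. For any feasible $w\in\mathcal F$, adding the three subgradient inequalities and then using $\lambda_i g_i(w)\leq 0 = \lambda_i g_i(w_0)$ yields $\phi(w)-\phi(w_0)\geq 0$.

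For necessity, I would rewrite \eqref{eq:cvx_opt} as the unconstrained minimization of $f+h+\delta(\cdot;\Omega)$, where $\Omega=\{w\in\mathcal H : g_i(w)\leq 0,\ i=1,\dots,m\}$, so that optimality of $w_0$ gives $0\in\partial(f+h+\delta(\cdot;\Omega))(w_0)$. Continuity of $f$ lets me peel off $\partial f(w_0)$ via a routine sum rule. For the remaining split $\partial(h+\delta(\cdot;\Omega))(w_0)=\partial h(w_0)+N(w_0;\Omega)$, Slater supplies $u\in\dom h$ with $g_i(u)<0$ for all $i$, and continuity of the $g_i$ then places $u$ in $\sint(\Omega)$; hence $\sint(\Omega)\cap\dom h\neq\emptyset$, and Theorem \ref{NIR}(a) applied at the epigraph level (together with \eqref{SNR} and Lemma \ref{subE}) produces the desired split. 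Thus $0\in\partial f(w_0)+\partial h(w_0)+N(w_0;\Omega)$.

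The main obstacle is representing $N(w_0;\Omega)$ in terms of the $\partial g_i(w_0)$. I would set $G(w)=\max_{1\leq i\leq m} g_i(w)$, so $\Omega=\{G\leq 0\}$ and $G$ is continuous convex. If $G(w_0)<0$, then $w_0\in\sint(\Omega)$, forcing $N(w_0;\Omega)=\{0\}$ so all $\lambda_i=0$ work. Otherwise $G(w_0)=0$, and the key identity is $N(w_0;\{G\leq 0\})=\cone(\partial G(w_0))$. The nontrivial inclusion I would prove by separation: for nonzero $z\in N(w_0;\Omega)$, Slater's $G(u)<0$ forces $\langle z,u-w_0\rangle<0$, and a scaled hyperplane separating $\epi(G)$ from the half-space $\{\lambda\leq 0\}$ at $(w_0,0)$ produces some $\lambda>0$ with $z/\lambda\in\partial G(w_0)$. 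Finally, iterating Theorem \ref{MaxRule} across the $m$ continuous functions $g_i$ (where the $N(w_0;\dom(\cdot))$ correction is absent because each $g_i$ is real-valued) delivers $\partial G(w_0)=\co\{\partial g_i(w_0):g_i(w_0)=0\}$. Expressing a generic $z\in N(w_0;\Omega)$ through this convex combination produces nonnegative multipliers on the active indices, which I extend by $\lambda_i=0$ on the inactive ones to secure complementary slackness. The delicate step is this $N(w_0;\{G\leq 0\})=\cone(\partial G(w_0))$ identity, since that is where Slater's strict feasibility is genuinely used; the remainder is routine subdifferential bookkeeping assembled from the tools already developed earlier in the excerpt.
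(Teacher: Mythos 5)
Your sufficiency argument is the same as the paper's (and you are right that the third summand in \eqref{KKTC} is a typo for $\sum_i\lambda_i\partial g_i(w_0)$). Your necessity argument, however, takes a genuinely different route. The paper does not pass through the normal cone $N(w_0;\Omega)$ at all: it observes that $w_0$ minimizes the single max function $\varphi=\max\{f+h-(f+h)(w_0),\,\max_ig_i\}$ over all of $\mathcal H$, applies Theorem~\ref{MaxRule} once to get $0\in\co\big(\partial(f{+}h)(w_0)\cup\partial g(w_0)\big)+N(w_0;\dom(h))$, extracts convex coefficients $\lambda_0,\gamma_0$, and uses the Slater point only to rule out $\lambda_0=0$; Lemma~\ref{subE} then absorbs the $N(w_0;\dom(h))$ term. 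Your route instead decomposes $\partial(f+h+\delta(\cdot;\Omega))(w_0)$ via sum rules (using Slater to get $\mbox{\rm int}(\Omega)\cap\dom(h)\neq\emptyset$, which does make Theorem~\ref{NIR}(a) applicable at the epigraph level) and then represents $N(w_0;\Omega)$ as $\cone(\partial G(w_0))$. This is a legitimate alternative architecture and arguably more modular, but it front-loads the difficulty into the normal-cone identity, whereas the paper's single max-function trick handles the objective and the constraints simultaneously and never needs that identity.

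The one step you should repair is the justification of $N(w_0;\{G\leq 0\})\subset\cone(\partial G(w_0))$. The separation you describe --- separating $\epi(G)$ from the half-space $\{(w,\lambda):\lambda\leq 0\}$ --- cannot be carried out: by the Slater condition the point $(u,G(u)/2)$ lies in the interior of both sets, so no separating hyperplane exists. The standard correct argument is itself a max-function argument: for $0\neq z\in N(w_0;\Omega)$, note that $w_0$ minimizes $\psi(w)=\max\{-\la z,w-w_0\ra,\,G(w)\}$ over $\mathcal H$, apply the max rule (both pieces are active since $G(w_0)=0$) to get $0=-\alpha z+(1-\alpha)v$ with $v\in\partial G(w_0)$ and $\alpha\in[0,1]$, and use $G(u)<0=G(w_0)$ to exclude $\alpha=0$ (else $0\in\partial G(w_0)$ would make $w_0$ a minimizer of $G$). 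Once you do this, your proof is essentially reusing the paper's central device, just applied to a linear objective instead of $f+h$; so the two proofs converge at precisely the point you flagged as delicate. With that repair (and the active-index refinement of the max rule for $\partial G(w_0)$, which the paper takes from its cited Theorem~3.59), your argument is complete.
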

\begin{proof} Fix any feasible solution $w_0$ to \eqref{eq:cvx_opt} such that $\partial h(w_0)\neq\emptyset$. \\[1ex]
$\Longrightarrow$: Suppose that $w_0$ is an optimal solution to \eqref{eq:cvx_opt} under the Slater condition. We consider two cases.\\[1ex]
{\bf Case 1}: $g_i(w_0)<0$ for $i=1, \ldots, m$. Define the constraint set
\begin{equation*}
    \Omega=\big\{w\in \mathcal H\; \big |\; g_i(w)\leq 0\ \; \mbox{\rm for all }i=1, \ldots, m\big\}.
\end{equation*}
Obviously, we have $w_0\in \mbox{\rm int}(\Omega)$. 
Since $w_0$ is optimal to \eqref{eq:cvx_opt}, by both Fermat's rule and the subdifferential sum rule (see, e.g., \cite[Theorem~3.48]{bmn2022}), we can show that
\begin{equation*}
    0\in \partial (f+h)(w_0)+N(w_0; \Omega)=\partial f(w_0)+\partial h(w_0).
\end{equation*}
Consequently, \eqref{KKTC} holds with $\lambda_i=0$ for $i=1, \ldots, m$. \\[1ex]
{\bf Case 2}: $g_i(w_0)=0$ for some $i=1, \ldots, m$. 
Since $w_0$ is optimal to \eqref{eq:cvx_opt}, it is straightforward to check that $w_0$ is also an optimal solution to  the problem
\begin{equation*}
    \mbox{\rm min}\; \ph(w)=\max_{i=1, \ldots, m}\{f(w)+h(w)-f(w_0)-h(w_0), g_i(w)\}, \; w\in \mathcal H.
\end{equation*}
Let $g=\max_{i=1, \ldots, m}g_i$. Since $\dom(f+h)=\dom(h)$ due to the fact that $\dom(f)=\mathcal H$, by Theorem \ref{MaxRule}, we have
\begin{equation}\label{subin}
    0\in \partial \ph(w_0)\subset \mbox{\rm co}\big(\partial (f+h)(w_0)\cup \partial g(w_0)\big)+N(w_0; \dom(h)).
    \end{equation}
Consider the  active index set
\begin{equation*}
    I(w_0)=\big\{i=1, \ldots, m\; \big |\; g_i(w_0)=0\big\}\neq \emptyset.
\end{equation*}
In this case, we have $g(w_0)=0$, and by \cite[Theorem~3.59]{bmn2022} we conclude that
\begin{equation*}
    \partial g(w_0)=\co\big(\bigcup_{i\in I(w_0)}\partial g_i(w_0)\big).
\end{equation*}
From \eqref{subin}, there exist $\lambda_0\geq 0$ and $\gamma_0\geq 0$ along with $v_0\in \partial f(w_0)+\partial h(w_0)$, $\hat{v}\in \partial g(w_0)$ and $v\in N(w_0; \dom(h))$ such that $\lambda_0+\gamma_0=1$ and
\begin{equation*}
    0=\lambda_0 v_0+\gamma_0 \hat{v}+v. 
\end{equation*}
Since $\hat{v}\in \partial g(w_0)$, we have the representation
\begin{equation*}
    \hat{v}=\sum_{i\in I(w_0)}\gamma_i v_i,
\end{equation*}
where $\gamma_i\geq 0$, $v_i\in \partial g_i(w_0)$ for $i\in I(w_0)$ and $\sum_{i\in I(w_0)}\gamma_i=1$. 

Now, set $\lambda_i=0$ if $i\in \{1, \ldots, m\}\setminus I(w_0)$, and $\lambda_i=\gamma_0 \gamma_i$ for $i\in I(w_0)$. 
Then, we can show that $\sum_{i=0}^m \lambda_i = \lambda_0 + \gamma_0\sum_{i \in I(w_0)}\gamma_i = \lambda_0 + \gamma_0 = 1$. 
Moreover, we also have the representation
\begin{equation}\label{zero sum}
    0=\sum_{i=0}^m \lambda_i v_i +v,
\end{equation}
where $\lambda_i g_i(w_0)=0$ for $i=1, \ldots, m$. 

Next, we will show that $\lambda_0>0$ under the Slater condition. Suppose on the contrary that $\lambda_0=0$. Then, we have  $\sum_{i=1}^m\lambda_i=1$ and 
\begin{equation*}
    0=\sum_{i=1}^m \lambda_i v_i +v.
\end{equation*}
Let $u$ be an element in $\mathcal H$ that satisfies the Slater condition. Since $u\in \dom(h)$, by the definition of convex normal cone, we have $\la v, u-w_0\ra\leq 0$. Using the last relation, we have
\begin{equation*}
    0=\sum_{i=1}^m \la \lambda_i v_i,  u-w_0\ra+\la v,  u-w_0\ra\leq \sum_{i=1}^m \lambda_i(g_i( u)-g_i(w_0))=\sum_{i=1}^m \lambda_i g_i(u)<0,
\end{equation*}
which yields a contradiction, and thus we conclude that $\lambda_0>0$. 
By dividing by $\lambda_0$, redefining $\lambda_i$ as $\lambda_i \leftarrow \lambda_i/\lambda_0$, and using the cone property of $N(w_0; \dom(h))$ in \eqref{zero sum}, we can show that
\begin{equation*}
    0\in \partial f(w_0)+\partial h(w_0)+\sum_{i=1}^m \lambda_i \partial g_i(w_0)+N(w_0; \dom(h))=\partial f(w_0)+\partial h(w_0)+\sum_{i=1}^m \lambda_i\partial g_i(w_0),
\end{equation*}
where the last equality holds by Lemma \ref{subE}. Therefore, \eqref{KKTC} also holds with $\lambda_i g_i(w_0)=0$ for $i=1, \ldots, m$.

$\Longleftarrow$: Suppose that there exist $\lambda_i\geq 0$ such that \eqref{KKTC} holds with $\lambda_ig_i(w_0)=0$ for $i=1, \ldots, m$. 
Then, we have the representation
\begin{equation*}
    0=v_0+\sum_{i=1}^m \lambda_i v_i,
\end{equation*}
where $v_0\in \partial f(w_0)+\partial h(w_0)$, and $v_i\in \partial g_i(w_0)$ for $i=1, \ldots, m$. 
Taking any feasible solution $w$ of \eqref{eq:cvx_opt}, we have
\begin{eqnarray*}
\begin{array}{lcl}
    0 &= & \la v_0, w-w_0\ra+\sum_{i=1}^m \lambda_i\la v_i, w-w_0\ra \vspace{1ex}\\
    &  \leq & (f+h)(w)-(f+h)(w_0)+\sum_{i=1}^m\lambda_i(g_i(w)-g_i(w_0)) \vspace{1ex}\\
    &= & (f+h)(w)-(f+h)(w_0)+\sum_{i=1}^m \lambda_ig_i(w) \vspace{1ex}\\
    & \leq & (f+h)(w)-(f+h)(w_0).
    \end{array}
\end{eqnarray*}
This implies that $f(w_0)+h(w_0)\leq f(w)+h(w)$, and thus $w_0$ is an optimal solution to \eqref{eq:cvx_opt}. 
\end{proof}

\begin{Remark}
{\rm It follows from the proof of Theorem \ref{KKT} that the converse implication in the theorem does not require the Slater condition.
}
\end{Remark}

Next, we consider the optimization problem:
\begin{equation}\label{Q4}
\left\{\begin{array}{ll}
{\displaystyle\min_{ w \in \mathcal{H}}} &f(w), \vspace{1ex}\\
\mathrm{s.t.} &  g_i(w) \leq 0\; \mbox{\rm for }i=1, \ldots,m, \vspace{1ex}\\
&w\in \Theta,
\end{array}\right.
\end{equation}
where $f\colon \mathcal H\to \R$ and $g_i\colon \mathcal H\to \R$ are continuous convex functions, and $\Theta$ is a nonempty convex set in $\mathcal H$. 
\begin{Definition}\label{Slater Inequality} We say that the Slater condition is satisfied for \eqref{Q4} if there exists $u\in \mathcal H$ such that
\begin{enumerate}
   \item $u\in \Theta$,
   \item $g_i(u)<0$ for $i=1, \ldots, m$.
\end{enumerate}
    
\end{Definition}
Define the Lagrange function for \eqref{Q4}  by
\begin{equation*}
\mathcal{L}(w,\lambda) = f(w)+ \sum_{i=1}^m \lambda_i g_i(w),
\end{equation*}
where $\lambda=(\lambda_1, \ldots, \lambda_m)\in \R^m$ is the vector of Lagrange multipliers. 

The corollary below follows directly from Theorem \ref{KKT}.

\begin{Corollary}\label{KKT_Inequality} Consider the convex optimization problem  \eqref{Q4} and assume that the Slater condition in Definition \ref{Slater Inequality} is satisfied for this problem.  Let $w_0$ be a feasible solution to \eqref{Q4}. 
Then, $w_0$ is an optimal solution to this optimization problem if and only if there exists $\lambda=(\lambda_1,\ldots,\lambda_m)\in\R_+^m$ such that
\begin{equation}\label{kkt4}
0\in \partial_w\mathcal{L}(w_0, \lambda)+N(w_0; \Theta)= \partial f(w_0)+\sum_{i=1}^m\lambda_i\partial g_i(w_0)+N(w_0; \Theta),
\end{equation}
with $\lambda_i g_i(w_0)=0$ for all $i=1,\ldots,m$.
  
\end{Corollary}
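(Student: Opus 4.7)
The plan is to deduce this corollary directly from Theorem \ref{KKT} by specializing the composite objective in \eqref{eq:cvx_opt} to the case $h(w) = \delta(w;\Theta)$. With this choice, the feasible set and objective of \eqref{eq:cvx_opt} coincide with those of \eqref{Q4}, so the whole task reduces to checking that the hypotheses of Theorem \ref{KKT} hold and then rewriting the conclusion in the stated form.

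First, I would verify the subdifferentiability hypothesis $\partial h(w_0) \neq \emptyset$. Since $w_0$ is feasible for \eqref{Q4}, we have $w_0 \in \Theta$, and from the remark recorded earlier in the excerpt, $\partial \delta(\cdot;\Theta)(w_0) = N(w_0;\Theta)$, which always contains $0$. Second, I would match up the Slater conditions: $\dom(h) = \Theta$, so the two requirements of Definition \ref{Slater Inequality}, namely $u \in \Theta$ and $g_i(u) < 0$ for all $i$, translate verbatim into $u \in \dom(h)$ with $g_i(u) < 0$, which is exactly the Slater condition of Theorem \ref{KKT}.

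Having verified both hypotheses, Theorem \ref{KKT} asserts that $w_0$ is optimal for \eqref{Q4} if and only if there exist $\lambda_i \geq 0$, with $\lambda_i g_i(w_0) = 0$, such that
\begin{equation*}
    0 \in \partial f(w_0) + \partial h(w_0) + \sum_{i=1}^m \lambda_i\, \partial g_i(w_0).
\end{equation*}
Substituting $\partial h(w_0) = N(w_0;\Theta)$ and recognizing that $\partial_w \mathcal{L}(w_0,\lambda) = \partial f(w_0) + \sum_{i=1}^m \lambda_i \partial g_i(w_0)$ (by the subdifferential sum rule applied to the finite sum of continuous convex functions, as in Theorem \ref{NIR} combined with the standard continuous-convex sum rule), I recover precisely \eqref{kkt4}, together with complementary slackness $\lambda_i g_i(w_0)=0$.

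The main obstacle is essentially nonexistent here, since the corollary is a direct specialization. The only bookkeeping worth flagging is ensuring that the sum rule $\partial(f + \sum_i \lambda_i g_i)(w_0) = \partial f(w_0) + \sum_i \lambda_i \partial g_i(w_0)$ is available in the Hilbert space setting without additional qualification, which it is because $f$ and each $g_i$ are continuous (hence $\dom$ is all of $\mathcal H$, so the interior condition of Theorem \ref{NIR} is automatic).
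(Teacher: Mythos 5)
Your proposal is correct and follows essentially the same route as the paper: both set $h(w)=\delta(w;\Theta)$, note that $\partial h(w_0)=N(w_0;\Theta)\neq\emptyset$ since $0\in N(w_0;\Theta)$, observe that the Slater condition of Definition \ref{Slater Inequality} becomes the Slater condition of Theorem \ref{KKT}, and then read off \eqref{kkt4}. Your extra remark justifying the subdifferential sum rule for $\partial_w\mathcal{L}$ is a harmless (and slightly more careful) addition that the paper leaves implicit.
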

\begin{proof} Let $w_0$ be a feasible solution to \eqref{Q4}. Define the function $h\colon \mathcal H\to \oR$ by
\begin{equation*}
    h(w)=\delta(w; \Theta), \ \; w\in \mathcal H.
\end{equation*}
Then, the optimization \eqref{Q4} reduces to \eqref{eq:cvx_opt}. We have $\partial h(w_0)=N(w_0; \Theta)\neq\emptyset$ because always have $0\in N(w_0; \Theta)$. By Theorem \ref{KKT}, the element $w_0$ is an optimal solution to \eqref{Q4} if and only if there exist $\lambda_i\geq 0$ for $i=1, \ldots, m$ such that \eqref{kkt4} holds with $\lambda_i g_i(w_0)=0$ for $i=1, \ldots, m$.  This completes the proof.     
\end{proof}

Next, we further discuss another consequence of the general result from Theorem \ref{KKT} for convex optimization problems with affine constraints using the notion of quasi-relative interior. Consider the following optimization problem:
\begin{equation}\label{Q3}
\left\{\begin{array}{ll}
{\displaystyle\min_{ w \in \mathcal{H}}} &f(w), \vspace{1ex}\\
\mathrm{s.t.} &  g_i(w) \leq 0\; \mbox{\rm for all }i=1, \ldots,m, \vspace{1ex}\\
&\la v_j, w\ra\leq b_j\; \mbox{\rm for }j=1, \ldots, p, \vspace{1ex}\\
&w\in \Theta,
\end{array}\right.
\end{equation}
where $f, g_i\colon \mathcal H\to \R$ for $i=1, \ldots, m$ are continuous convex functions,  $v_j\in \mathcal H$ and $b_j\in \R$ for $j=1, \ldots, p$, and $\Theta$ is a nonempty convex set in $\mathcal H$. 

\begin{Definition}\label{MixCon}
We say that the Slater condition holds for \eqref{Q3} if there exists $u\in \mathcal H$ such that:
 \begin{enumerate}
 \item $u\in \qri(\Theta)$,
  \item $g_i(u)<0$ for $i=1, \ldots, m$,
 \item $\la v_j, u\ra\leq b_j$ for $j=1, \ldots, p$.
  \end{enumerate}
  \end{Definition}

 Define the  Lagrange function for \eqref{Q3}  by
\begin{equation*}
\mathcal{L}(w,\lambda, \mu) = f(w)+ \sum_{i=1}^m \lambda_i g_i(w)+\sum_{j=1}^p \mu_j h_j(w),
\end{equation*}
where $\lambda=(\lambda_1, \ldots, \lambda_m)\in \R^m$, $\mu = (\mu_1, \cdots, \mu_p) \in \R^p$, and $h_j(w)=\la v_j, w\ra-b_j$ for $w\in \mathcal H$.

\begin{Theorem}\label{kttgpc}   
Consider the convex optimization problem  \eqref{Q3} and assume that the Slater condition in Definition \ref{MixCon} is satisfied for this problem.  Let $w_0$ be a feasible solution to \eqref{Q3}. 
Then, $w_0$ is an optimal solution to this problem if and only if there exist  $\lambda=(\lambda_1,\ldots,\lambda_m)\in\R_+^m$ and  $\mu=(\mu_1, \ldots, \mu_p)\in \R^p_+$ such that
\begin{equation}\label{kkt3}
0\in \partial_w\mathcal{L}(w_0, \lambda, \mu)+N(w_0; \Theta)= \partial f(w_0)+\sum_{i=1}^m\lambda_i\partial g_i(w_0)+\sum_{j=1}^p \mu_j v_j+N(w_0; \Theta),
\end{equation}
with $\lambda_i g_i(w_0)=0$ for $i=1,\ldots,m$ and $\mu_j h_j(w_0) = 0$ for $j=1, \ldots, p$.
\end{Theorem}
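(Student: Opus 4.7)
The plan is to reduce the proof to the already-established Corollary \ref{KKT_Inequality} by absorbing the linear inequality constraints into a polyhedral set and then splitting the combined normal cone via the polyhedral calculus rule from Theorem \ref{PNIR}.

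First I would introduce the polyhedral convex set
\begin{equation*}
    P = \big\{w \in \mathcal{H} \;\big|\; \la v_j, w\ra \leq b_j\; \mbox{\rm for all }j=1,\ldots,p\big\}
\end{equation*}
and set $C = \Theta \cap P$. Then \eqref{Q3} is precisely the optimization problem \eqref{Q4} with $\Theta$ replaced by $C$. The Slater condition in Definition \ref{MixCon} provides $u \in \qri(\Theta) \subset \Theta$ with $u \in P$ and $g_i(u)<0$ for all $i$, which in particular gives $u \in C$ with $g_i(u) < 0$; hence the Slater condition of Definition \ref{Slater Inequality} holds for the reduced problem. Applying Corollary \ref{KKT_Inequality} then characterizes $w_0$ as an optimal solution to \eqref{Q3} by the existence of $\lambda_i \geq 0$ with $\lambda_i g_i(w_0) = 0$ satisfying
\begin{equation*}
    0 \in \partial f(w_0) + \sum_{i=1}^m \lambda_i \partial g_i(w_0) + N(w_0; C).
\end{equation*}

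Next, because $u \in \qri(\Theta) \cap P$, Theorem \ref{PNIR} yields
\begin{equation*}
    N(w_0; C) = N(w_0; \Theta) + N(w_0; P).
\end{equation*}
For the polyhedral piece, I would invoke the standard representation of the normal cone to a polyhedral set as the conic hull of the active constraint gradients: every element of $N(w_0; P)$ can be written as $\sum_{j=1}^p \mu_j v_j$ with $\mu_j \geq 0$ and $\mu_j h_j(w_0) = 0$ (where $h_j(w) = \la v_j, w\ra - b_j$), and conversely every such combination belongs to $N(w_0; P)$. Substituting this into the previous inclusion produces exactly \eqref{kkt3} together with the complementary slackness conditions $\lambda_i g_i(w_0) = 0$ and $\mu_j h_j(w_0) = 0$.

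For the converse implication, no Slater-type hypothesis is needed: given multipliers $\lambda_i, \mu_j \geq 0$ satisfying \eqref{kkt3} together with both complementary slackness relations, the argument is a direct convexity computation analogous to the reverse direction of Theorem \ref{KKT}. Taking any feasible $w$, one pairs an element $v \in N(w_0;\Theta)$ with $w - w_0$ (nonpositive since $w \in \Theta$), and uses the subgradient inequalities for $f$ and each $g_i$ together with the equalities $\la v_j, w - w_0\ra \leq h_j(w) - h_j(w_0) = h_j(w)$ (so $\mu_j \la v_j, w - w_0\ra \leq 0$ by feasibility of $w$), to conclude $f(w_0) \leq f(w)$. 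The main subtlety I would watch out for is the polyhedral normal cone representation — it requires slight care in infinite dimensions, but it follows from an elementary Farkas-type argument that can be invoked as standard.
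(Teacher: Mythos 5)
Your proposal is correct and follows essentially the same route as the paper: both reduce \eqref{Q3} to the general KKT result of Theorem \ref{KKT} with the indicator of $\Theta\cap P$ (your detour through Corollary \ref{KKT_Inequality} is just that specialization), then split $N(w_0;\Theta\cap P)$ via Theorem \ref{PNIR} using the Slater point $u\in\qri(\Theta)\cap P$, and finish with the conic-hull representation of $N(w_0;P)$ over the active indices. Your separately written converse is harmless but redundant, since the normal-cone decomposition is an equality of sets and the equivalence already carries through the chain.
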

\begin{proof} Let $w_0$ be a feasible solution to \eqref{Q3}. Consider the polyhedral convex set \begin{equation}\label{Poly1}
P=\big\{w\in \mathcal H\; \big |\; \;  h_j(w)\leq 0\; \mbox{\rm for }j=1, \ldots, p\big\},
\end{equation}
and define the function $h\colon \mathcal H\to \oR$ by
\begin{equation*}
    h(w)=\delta(w; \Theta\cap P), \ \; w\in \mathcal H.
\end{equation*}
Then, the optimization \eqref{Q3} reduces to \eqref{eq:cvx_opt}. We have $\partial h(w_0)=N(w_0; \Theta\cap P)\neq\emptyset$. By Theorem \ref{KKT}, the element $w_0$ is an optimal solution to \eqref{Q3} if and only if there exist $\lambda_i\geq 0$ for $i=1, \ldots, m$ such that
\begin{equation*}
    0\in \partial f(w_0)+\sum_{i=1}^m \lambda_i\partial g_i(w_0)+N(w_0; \Theta\cap P),
\end{equation*}
where $\lambda_i g_i(w_0)=0$ for $i=1, \ldots, m$. 
Observe that $\mbox{\rm qri}(\Theta)\cap P\neq\emptyset$ under the Slater condition. 
Hence, by Theorem \ref{PNIR}, we have
\begin{equation*}
    N(w_0; \Theta\cap P)=N(w_0; \Theta)+N(w_0; P).
\end{equation*}
Using basic properties of linear algebra, we have
  \begin{equation*}
    N(w_0; P)={\rm cone}\big\{v_j\;\big|\;j\in J(w_0)\big\}=\big\{\sum_{j\in J(w_0)}\mu_j v_j\; \big |\; \mu_j\geq 0\; \mbox{\rm for }j\in J(w_0)\big\},
\end{equation*}
where $J(w_0)=\{j=1, \ldots, p\; |\; h_j(w_0)=0\}$.
Therefore, we can easily show that $w_0$ is an optimal solution to \eqref{Q3} iff \eqref{KKT2} holds with $\lambda_i g_i(w_0)=0$ for $i=1, \ldots, m$ and and $\mu_j h_j(w) = 0$ for $j=1, \ldots, p$. This completes the proof. 
\end{proof}
\begin{Remark}\label{KKTRn} {\rm 
Following the proof of Theorem \ref{kttgpc}, we see that this theorem is valid under  weaker Slater conditions (a) and (b) below:
\begin{enumerate}
\item There exists $u\in \Theta\cap P$ such that
\begin{equation*}
    g_i(u)<0\; \ \mbox{\rm for }i=1, \ldots, m.
\end{equation*}
    \item There exists $\tilde u\in \qri(\Theta)$ such that
    \begin{equation*}
        h_j(\tilde u)\leq 0\; \; \mbox{\rm for }j=1, \ldots, p.
    \end{equation*}
\end{enumerate}}
\end{Remark}

To conclude this subsection, we discuss another consequence of the general result from Theorem \ref{KKT}.

\begin{Definition}\label{SlaterCQ2}
Consider the problem \eqref{Q2}.  We say that the Slater condition is satisfied for  \eqref{Q2} if there exists $u\in \mathcal H$ such that:
 \begin{enumerate}
 \item $u\in \qri(\Theta)$,
 \item $g_i(u)<0$ for all $i=1, \ldots, m$,
 \item $\mathcal A u=b$.
 \end{enumerate}
 \end{Definition}
 
Define the  Lagrange function for \eqref{Q2}  by
\begin{equation}\label{LF1}
\mathcal{L}(w, \lambda, \eta)=f(w)+\sum_{i=1}^m \lambda_i g_i(w)+\la \eta, \mathcal Aw-b\ra,\; w\in \mathcal H,
\end{equation}
where $\lambda=(\lambda_1, \ldots, \lambda_m)\in \R^m$ and $\eta \in \R^q$.

The theorem below can be proved by repeating the proof of Theorem \ref{kttgpc} using the polyhedral convex set
$$P=\big\{w\in \mathcal{H}\;\big |\; \mathcal{A}w=b\big\}$$
instead of the set $P$ defined in \eqref{Poly1}. For this new set $P$, by basic properties of linear algebra we observe that
\begin{equation*}
    N(w_0; P)=(\mbox{\rm ker}(\mathcal{A}))^\perp=\mbox{\rm im}(\mathcal{A}^*).
\end{equation*}
In fact, this theorem can also be derived from Theorem \ref{kttgpc}.
\begin{Theorem} \label{KKT_2} 
Consider the convex optimization problem \eqref{Q2} in a Hilbert space $\mathcal{H}$.
Suppose that the Slater condition in Definition \ref{SlaterCQ2} is satisfied for \eqref{Q2} and $w_0$ is a feasible solution to this problem. Then, $w_0$ is an optimal solution to \eqref{Q2} if and only if there exist  $\lambda=(\lambda_1,\ldots,\lambda_m)\in\R_+^m$ and $\eta\in \R^q$ such that
\begin{equation}\label{KKT2}
0\in \partial_w\mathcal{L}(w_0, \lambda, \eta)+N(w_0; \Theta)=\partial f(w_0)+\sum_{i=1}^m\lambda_i\partial g_i(w_0)+\mathcal{A}^*\eta+N(w_0; \Theta),
\end{equation}
with $\lambda_i g_i(w_0)=0$ for $i=1,\ldots,m$.
Here, $\mathcal{A}^{*}$ is the adjoint mapping of $\mathcal{A}$.
\end{Theorem}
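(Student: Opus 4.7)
The plan is to derive Theorem \ref{KKT_2} by reducing problem \eqref{Q2} to the format \eqref{eq:cvx_opt} and applying Theorem \ref{KKT}, exactly paralleling the proof of Theorem \ref{kttgpc} but with the polyhedral set encoding equalities instead of inequalities. First I would introduce
\begin{equation*}
P=\big\{w\in \mathcal H\; \big |\; \mathcal A w=b\big\}
\end{equation*}
and observe that, since $\mathcal A\colon \mathcal H\to \R^q$ is continuous and linear, writing $\mathcal A w=(\la v_1, w\ra, \ldots, \la v_q, w\ra)$ shows that $P$ is a polyhedral convex set (intersection of finitely many halfspaces given by $\la v_j, w\ra \leq b_j$ and $\la -v_j, w\ra \leq -b_j$). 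Then I would set $h(w)=\delta(w; \Theta\cap P)$ so that \eqref{Q2} becomes a special case of \eqref{eq:cvx_opt}, noting that $\partial h(w_0)=N(w_0;\Theta\cap P)\neq\emptyset$ since $0$ always lies in this normal cone.

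Next, I would verify that the Slater condition of Theorem \ref{KKT} holds for this reformulation. The element $u$ provided by Definition \ref{SlaterCQ2} satisfies $u\in \qri(\Theta)\subset \Theta$ and $\mathcal A u=b$, hence $u\in \Theta\cap P=\dom(h)$, while the strict inequalities $g_i(u)<0$ are retained. Applying Theorem \ref{KKT} yields $\lambda_i\geq 0$ with $\lambda_i g_i(w_0)=0$ and
\begin{equation*}
0\in \partial f(w_0)+\sum_{i=1}^m \lambda_i\,\partial g_i(w_0)+N(w_0; \Theta\cap P).
\end{equation*}

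The next step is to split the normal cone to $\Theta\cap P$. Since the Slater condition supplies $u\in \qri(\Theta)\cap P$, Theorem \ref{PNIR} applies and gives
\begin{equation*}
N(w_0; \Theta\cap P)=N(w_0; \Theta)+N(w_0; P).
\end{equation*}
I would then identify $N(w_0; P)$. As $P$ is an affine subspace translated from $\ker(\mathcal A)$, one has $N(w_0; P)=(\ker \mathcal A)^\perp$, and the standard Hilbert-space identity $(\ker \mathcal A)^\perp=\overline{\im(\mathcal A^*)}$ applies. Because $\mathcal A^*\colon \R^q\to \mathcal H$ has finite-dimensional domain, its range is automatically closed, so $N(w_0; P)=\im(\mathcal A^*)$. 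Thus every element of $N(w_0; P)$ has the form $\mathcal A^*\eta$ for some $\eta\in \R^q$, which produces the desired inclusion \eqref{KKT2}.

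For the converse implication, I would argue exactly as in the last part of the proof of Theorem \ref{KKT}: given multipliers $\lambda$ and $\eta$ satisfying \eqref{KKT2} with complementary slackness, write $0=v_0+\sum_i\lambda_i v_i+\mathcal A^*\eta+\nu$ with $v_0\in\partial f(w_0)$, $v_i\in \partial g_i(w_0)$, and $\nu\in N(w_0; \Theta)$; then for any feasible $w$ (so $w\in\Theta$, $\mathcal A w=b$, $g_i(w)\leq 0$) pair this identity with $w-w_0$ and use subgradient inequalities, $\la \mathcal A^*\eta, w-w_0\ra=\la \eta, \mathcal A w-\mathcal A w_0\ra=0$, $\la \nu, w-w_0\ra\leq 0$, and $\lambda_i g_i(w)\leq 0$ to conclude $f(w)\geq f(w_0)$. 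The only genuine subtlety in the whole argument is the identification $N(w_0; P)=\im(\mathcal A^*)$, which hinges on the closedness of $\im(\mathcal A^*)$ coming from the finite-dimensionality of $\R^q$; otherwise the proof is a direct transcription of the scheme already used for Theorem \ref{kttgpc}.
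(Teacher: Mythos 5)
Your proposal is correct and follows essentially the same route as the paper: the paper likewise proves Theorem \ref{KKT_2} by repeating the argument of Theorem \ref{kttgpc} with the polyhedral set $P=\{w\in\mathcal H \mid \mathcal A w=b\}$ and the identification $N(w_0;P)=(\ker\mathcal A)^\perp=\mbox{\rm rge}\,(\mathcal A^*)$. Your added remark that closedness of $\mbox{\rm rge}\,(\mathcal A^*)$ follows from the finite-dimensionality of $\R^q$ is a worthwhile detail the paper leaves implicit.
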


\begin{Remark}\label{KKTR} {\rm 
Similar to Remark \ref{KKTRn}, the conclusion of Theorem \ref{KKT_2}  remains valid under weaker  Slater conditions:
\begin{enumerate}
\item There exists $u\in \Theta\cap P$ such that
\begin{equation*}
    g_i(u)<0\; \ \mbox{\rm for }i=1, \ldots, m.
\end{equation*}
    \item There exists $\tilde u\in \qri(\Theta)$ such that
    \begin{equation*}
       \mathcal A(\tilde u)=b.
    \end{equation*}
\end{enumerate}
}
\end{Remark}

\subsection{Lagrangian  Duality in Constrained Convex Optimization}\label{sec:Lag_duality_for_cvx}
This subsection presents a unified weak and strong duality theory for the constrained convex optimization problem \eqref{Q2}. 

Using the Lagrange function \eqref{LF1}, we define the dual function $\Hat{\mathcal{L}}\colon \R^m_+\times \R^q\to [-\infty, \infty)$ by
\begin{equation*}
\Hat{\mathcal{L}}(\lambda,\eta)=\inf \big\{\mathcal{L}(w, \lambda, \eta)\; \big |\; w\in \Theta\big\}.
\end{equation*}
\begin{Proposition} The function $\Hat{\mathcal{L}}\colon \R^m_+\times \R^q\to [-\infty, \infty)$ is a concave function.
\end{Proposition}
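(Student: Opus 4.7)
The plan is to exploit the fact that, for each fixed $w \in \Theta$, the map $(\lambda,\eta)\mapsto \mathcal{L}(w,\lambda,\eta)$ is affine, and then invoke the standard fact that the pointwise infimum of a family of affine (hence concave) functions is concave.

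More concretely, I would first fix an arbitrary $w\in\Theta$ and observe from the definition of the Lagrange function
\begin{equation*}
    \mathcal{L}(w,\lambda,\eta) = f(w) + \sum_{i=1}^m \lambda_i g_i(w) + \la \eta, \mathcal{A}w - b\ra
\end{equation*}
that, viewed as a function of the pair $(\lambda,\eta)\in\R^m_+\times\R^q$, this is the sum of the constant $f(w)$ and a linear form in $(\lambda,\eta)$; in particular it is affine, and therefore both convex and concave on $\R^m_+\times\R^q$.

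Next, I would carry out the direct verification of concavity of $\Hat{\mathcal{L}}$. Given two pairs $(\lambda^1,\eta^1), (\lambda^2,\eta^2)\in \R^m_+\times\R^q$ and $t\in(0,1)$, set $(\lambda,\eta)=t(\lambda^1,\eta^1)+(1-t)(\lambda^2,\eta^2)$, which still lies in $\R^m_+\times\R^q$ by convexity of $\R^m_+$. For every $w\in\Theta$, affineness of $\mathcal{L}(w,\cdot,\cdot)$ yields
\begin{equation*}
    \mathcal{L}(w,\lambda,\eta) = t\,\mathcal{L}(w,\lambda^1,\eta^1) + (1-t)\,\mathcal{L}(w,\lambda^2,\eta^2) \geq t\,\Hat{\mathcal{L}}(\lambda^1,\eta^1) + (1-t)\,\Hat{\mathcal{L}}(\lambda^2,\eta^2).
\end{equation*}
Taking the infimum of the left-hand side over $w\in\Theta$ gives $\Hat{\mathcal{L}}(\lambda,\eta) \geq t\,\Hat{\mathcal{L}}(\lambda^1,\eta^1)+(1-t)\,\Hat{\mathcal{L}}(\lambda^2,\eta^2)$, which is the concavity inequality.

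There is essentially no obstacle here; the only mild subtlety to acknowledge is that $\Hat{\mathcal{L}}$ takes values in $[-\infty,\infty)$, so the inequality must be interpreted in the extended reals. This is harmless because the inequality $\Hat{\mathcal{L}}(\lambda,\eta)\geq \alpha$ derived above makes sense even if $\Hat{\mathcal{L}}(\lambda^j,\eta^j)=-\infty$ for some $j$ (the right-hand side is then $-\infty$), and the finiteness of the upper bound $+\infty$ is ensured by the fact that $\mathcal{L}(w,\lambda,\eta)<+\infty$ for any $w\in\Theta$ since $f$ is real-valued. No Slater-type condition or properness assumption is needed for the concavity claim itself.
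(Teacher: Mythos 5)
Your proof is correct and follows exactly the paper's approach: the paper's one-line argument is that $\Hat{\mathcal{L}}$ is the pointwise infimum over $w\in\Theta$ of functions that are affine (hence concave) in $(\lambda,\eta)$, and you simply spell out the standard verification of that fact together with the harmless extended-real-value bookkeeping. No difference in substance.
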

\begin{proof} By definition, $\Hat{\mathcal{L}}$ is concave, as it is the infimum of a collection of affine (and therefore concave) functions defined on $\mathbb{R}^m_+ \times \mathbb{R}^q$.
\end{proof}

Now, we consider the maximization problem:
\begin{equation}\label{LD}
\left\{\begin{array}{ll}
{\displaystyle\max}& \Hat{\mathcal{L}}(\lambda, \eta), \vspace{1ex}\\
\mathrm{s.t.} &  \lambda\in \R^m_+, \;\eta\in \R^q.
\end{array}\right.
\end{equation}
This problem is referred to as the Lagrange dual problem of the primal problem \eqref{Q2}.

To continue, we define the constraints of the primal problem and the dual problem, respectively, by
\begin{eqnarray*}
\begin{array}{ll}
&\Omega_p=\big\{w\in \mathcal H\; \big|\; g_i(w)\leq 0\; \mbox{\rm for }i=1, \ldots, m,\; \mathcal A(w)=b\big\}\cap \Theta, \vspace{1ex}\\
&\Omega_d=\R^m_+\times \R^q.
\end{array}
\end{eqnarray*}

\begin{Theorem}\label{LWD} 
Define the optimal value of the primal problem \eqref{Q2} and the optimal value of the dual problem \eqref{LD}, respectively, as follows:
\begin{eqnarray*}
\begin{array}{ll}
&p=\inf\{f(w)\; |\; w\in \Omega_p\}, \vspace{1ex}\\
&d=\sup\{\Hat{\mathcal{L}}(\lambda, \eta)\; |\; (\lambda,\eta)\in \Omega_d\}.
\end{array}
\end{eqnarray*}
Then, we always have
\begin{equation}\label{supinf}
\inf_{w\in \Theta} \sup_{(\lambda,\eta)\in \Omega_d}\mathcal{L}(w, \lambda, \eta)=p\quad \mbox{\rm and } \sup_{(\lambda, \eta)\in \Omega_d}\inf_{w\in \Theta} \mathcal{L}(w, \lambda, \eta)=d.
\end{equation}
Moreover, weak duality $d\leq p$ holds.
\end{Theorem}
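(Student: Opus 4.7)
The plan is to separately verify the two identities in \eqref{supinf} and then derive weak duality from the general minimax inequality (or, equivalently, directly from a pointwise estimate).

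First, I would compute the inner supremum in the left-hand identity. Fix $w \in \Theta$ and consider $\sup_{(\lambda,\eta)\in \Omega_d} \mathcal{L}(w,\lambda,\eta) = f(w) + \sup_{\lambda \geq 0}\sum_{i=1}^m \lambda_i g_i(w) + \sup_{\eta \in \R^q}\langle \eta, \mathcal{A}w-b\rangle$. I split into two cases. If $w \in \Omega_p$, then every $g_i(w) \leq 0$ forces $\lambda_i g_i(w) \leq 0$ (with the supremum attained at $\lambda_i = 0$), and $\mathcal{A}w - b = 0$ makes the linear term identically zero, yielding the supremum value $f(w)$. If $w \notin \Omega_p$, then either some $g_{i_0}(w) > 0$ (drive $\lambda_{i_0}\to \infty$) or $\mathcal{A}w - b \neq 0$ (take $\eta = t(\mathcal{A}w-b)$ with $t\to\infty$); in either case the supremum equals $+\infty$. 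Taking the infimum over $w \in \Theta$ therefore restricts to $w \in \Omega_p$, giving $\inf_{w\in\Theta}\sup_{(\lambda,\eta)\in\Omega_d}\mathcal{L}(w,\lambda,\eta) = \inf_{w\in\Omega_p} f(w) = p$.

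The right-hand identity is essentially a restatement of definitions: by construction $\Hat{\mathcal{L}}(\lambda,\eta) = \inf_{w\in\Theta}\mathcal{L}(w,\lambda,\eta)$, so $\sup_{(\lambda,\eta)\in\Omega_d}\inf_{w\in\Theta}\mathcal{L}(w,\lambda,\eta) = \sup_{(\lambda,\eta)\in\Omega_d}\Hat{\mathcal{L}}(\lambda,\eta) = d$.

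For the weak duality inequality, I would give a direct pointwise argument. For any $w \in \Omega_p$ and any $(\lambda,\eta) \in \Omega_d$, the feasibility of $w$ yields $\sum_{i=1}^m \lambda_i g_i(w) \leq 0$ and $\langle \eta, \mathcal{A}w - b\rangle = 0$, so $\Hat{\mathcal{L}}(\lambda,\eta) \leq \mathcal{L}(w,\lambda,\eta) \leq f(w)$. Taking the supremum over $(\lambda,\eta) \in \Omega_d$ on the left and the infimum over $w \in \Omega_p$ on the right gives $d \leq p$. (Equivalently, this is just the standard $\sup\inf \leq \inf\sup$ inequality combined with the two identities already established.)

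There is no real obstacle here — everything reduces to manipulating the definition of $\mathcal{L}$ and using nonnegativity of the multipliers together with feasibility. The only subtlety is handling the two possible failure modes of feasibility (inequality versus equality constraints) when computing the inner supremum, but both are standard and produce $+\infty$ in exactly the way needed to confine the outer infimum to $\Omega_p$.
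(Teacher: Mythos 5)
Your proposal is correct and follows essentially the same route as the paper: both compute the inner supremum by the same case analysis on $w\in\Omega_p$ versus $w\notin\Omega_p$, and read off the second identity from the definition of $\Hat{\mathcal{L}}$. The only cosmetic difference is that for $d\leq p$ the paper invokes the general $\sup\inf\leq\inf\sup$ minimax inequality, whereas you give the equivalent direct pointwise estimate $\Hat{\mathcal{L}}(\lambda,\eta)\leq\mathcal{L}(w,\lambda,\eta)\leq f(w)$ for feasible $w$; both are fine.
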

\begin{proof} Observe that for any $w\in \Theta$, which contains $\Omega_p$, we have
\begin{equation*}
\sup_{(\lambda,\eta)\in \Omega_d}\mathcal{L}(w, \lambda, \eta)=
\begin{cases}
f(w)&\; \mbox{\rm if }\;w\in \Omega_p,\\
\infty &\; \mbox{\rm if }\;w\notin \Omega_p.
\end{cases}
\end{equation*}
It follows that
\begin{equation*}
\inf_{w\in \Theta} \sup_{(\lambda,\eta)\in \Omega_d}\mathcal{L}(w, \lambda, \eta)=\inf_{w\in \Omega_p}f(w)=p.
\end{equation*}
By the definition of the dual function and $d$, we get
\begin{equation*}
\sup_{(\lambda, \eta)\in \Omega_d}\inf_{w\in \Theta} \mathcal{L}(w, \lambda, \eta)=\sup_{(\lambda, \eta)\in \Omega_d}\Hat{\mathcal{L}}(\lambda,\eta)=d.
\end{equation*}
The last conclusion now follows from the well-known minimax theorem; see, e.g., \cite{bmn24}. 
\end{proof}

\begin{Definition} An element $(\bar w, \bar \lambda, \bar \eta)\in \Theta\times \Omega_d$ is called a saddle point of the Lagrange function $\mathcal{L}$ defined in \eqref{LF1}  if
\begin{equation}\label{eq:saddle_point}
\mathcal{L}(\bar w, \lambda, \eta)\leq \mathcal{L}(\bar w, \bar \lambda, \bar \eta)\leq \mathcal{L}(w, \bar \lambda, \bar \eta) \quad \mbox{\rm for all }w\in \Theta, (\lambda, \eta)\in \Omega_d.
\end{equation}
\end{Definition}
Now, we state conditions for attaining strong duality in the following theorem.

\begin{Theorem}\label{lsd1} Consider the primal problem \eqref{Q2}  and its Lagrange dual problem \eqref{LD}. \\[1ex]
{\rm(a)} Suppose that the KKT conditions  hold for $\bar w\in \Omega_p$ and $(\bar{\lambda}, \bar{\eta})\in \Omega_d$, i.e. we have
\begin{equation}\label{ldsub}
0\in \partial_w\mathcal{L}(\bar w, \bar \lambda, \bar{\eta})+N(\bar w; \Theta)=\partial f(\bar w)+\sum_{i=1}^m \bar{\lambda}_i \partial g_i(\bar w)+\mathcal A^*\bar{\eta}+N(\bar w; \Theta)
\end{equation}
and complementary slackness $\bar{\lambda}_i g_i(\bar w)=0$ holds for all $i=1,\ldots,m$. 
Then, we obtain Lagrangian strong duality
\begin{equation*}
p=d. 
\end{equation*}
Furthermore, $\bar w$ is an optimal solution to the primal problem \eqref{Q2}, and $(\bar\lambda, \bar \eta)$ is an optimal solution to the dual problem \eqref{LD}. \\[1ex]
{\rm (b)} Under Lagrangian strong duality $p=d$, if $\bar w$ is an optimal solution to the primal problem \eqref{Q2}  and $(\bar{\lambda},\bar\eta)$ is an optimal solution to the dual problem \eqref{LD}, then the KKT conditions stated in {\rm(a)} are satisfied.  Furthermore, $(\bar w, \bar \lambda, \bar \eta)$ is a saddle point of $\mathcal{L}$.
\end{Theorem}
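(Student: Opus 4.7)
The plan is to prove (a) by first deducing that $\bar w$ minimizes $\mathcal L(\cdot,\bar\lambda,\bar\eta)$ over $\Theta$, and then chaining this against weak duality. Specifically, the inclusion \eqref{ldsub} rewrites via Fermat's rule and the convex subdifferential sum rule as
\begin{equation*}
0 \in \partial\big(\mathcal L(\cdot,\bar\lambda,\bar\eta) + \delta(\cdot;\Theta)\big)(\bar w),
\end{equation*}
so $\bar w \in \argmin_{w\in\Theta}\mathcal L(w,\bar\lambda,\bar\eta)$. Hence $\Hat{\mathcal L}(\bar\lambda,\bar\eta) = \mathcal L(\bar w,\bar\lambda,\bar\eta) = f(\bar w)$, where the last equality uses $\mathcal A\bar w = b$ together with complementary slackness $\bar\lambda_i g_i(\bar w) = 0$. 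Combining this with $\bar w \in \Omega_p$ and the weak duality inequality from Theorem \ref{LWD} yields the sandwich
\begin{equation*}
\Hat{\mathcal L}(\bar\lambda,\bar\eta) = f(\bar w) \geq p \geq d \geq \Hat{\mathcal L}(\bar\lambda,\bar\eta),
\end{equation*}
which forces $p = d = f(\bar w) = \Hat{\mathcal L}(\bar\lambda,\bar\eta)$ and thereby delivers all three conclusions of (a) simultaneously.

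For part (b), I would set up the reverse chain
\begin{equation*}
\Hat{\mathcal L}(\bar\lambda,\bar\eta) \leq \mathcal L(\bar w,\bar\lambda,\bar\eta) = f(\bar w) + \sum_{i=1}^m \bar\lambda_i g_i(\bar w) + \la\bar\eta,\mathcal A\bar w - b\ra \leq f(\bar w),
\end{equation*}
where the rightmost inequality uses $\bar\lambda_i \geq 0$, $g_i(\bar w)\leq 0$, and $\mathcal A\bar w = b$. By the optimality of $\bar w$ and $(\bar\lambda,\bar\eta)$ together with $p=d$, the two endpoints agree, so every intermediate inequality is an equality. Equality on the right combined with the sign constraints $\bar\lambda_i g_i(\bar w)\leq 0$ yields complementary slackness $\bar\lambda_i g_i(\bar w) = 0$ for all $i$, while equality on the left shows $\bar w$ minimizes $\mathcal L(\cdot,\bar\lambda,\bar\eta)$ over $\Theta$. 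Applying Fermat's rule and then the convex subdifferential sum rule recovers the KKT inclusion \eqref{ldsub}.

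The saddle point conclusion then follows by inspection: the right inequality in \eqref{eq:saddle_point} is exactly the minimization property just derived, while the left inequality reduces, after canceling $f(\bar w)$, to
\begin{equation*}
\sum_{i=1}^m \lambda_i g_i(\bar w) + \la\eta,\mathcal A\bar w - b\ra \leq 0 = \sum_{i=1}^m \bar\lambda_i g_i(\bar w) + \la\bar\eta,\mathcal A\bar w - b\ra,
\end{equation*}
which holds because $\bar w\in\Omega_p$ and $(\lambda,\eta)\in\R^m_+\times\R^q$. The only delicate step I foresee is the passage in (b) from "$\bar w$ minimizes the convex function $\mathcal L(\cdot,\bar\lambda,\bar\eta) + \delta(\cdot;\Theta)$" to the split subdifferential form \eqref{ldsub}; this is clean here because the Lagrangian in $w$ is a continuous real-valued convex function on all of $\mathcal H$, so the interior-based qualification condition of Theorem \ref{NIR}(a) is satisfied automatically and no quasi-relative interior refinement is required.
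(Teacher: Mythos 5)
Your proof is correct, and part (b) follows the paper's argument essentially verbatim: the chain $\Hat{\mathcal L}(\bar\lambda,\bar\eta)\le\mathcal L(\bar w,\bar\lambda,\bar\eta)\le f(\bar w)$ collapses under $p=d$ and the two optimality assumptions, yielding complementary slackness from the sign conditions and then the KKT inclusion from Fermat's rule. In part (a), however, you take a genuinely different and more self-contained route. The paper first verifies both saddle-point inequalities \eqref{eq:saddle_point} for $(\bar w,\bar\lambda,\bar\eta)$, then invokes the minimax theorem of \cite{bmn24} together with \eqref{supinf} to deduce $p=d$, and separately appeals to the converse implication of Theorem \ref{KKT} to obtain primal optimality of $\bar w$. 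You instead observe that the KKT inclusion makes $\bar w$ a minimizer of $\mathcal L(\cdot,\bar\lambda,\bar\eta)$ over $\Theta$, so that $\Hat{\mathcal L}(\bar\lambda,\bar\eta)=\mathcal L(\bar w,\bar\lambda,\bar\eta)=f(\bar w)$, and then the sandwich $f(\bar w)\ge p\ge d\ge \Hat{\mathcal L}(\bar\lambda,\bar\eta)$ forces all four quantities to coincide, delivering strong duality and both optimality claims in one stroke using only weak duality. This buys you independence from the external minimax citation and from Theorem \ref{KKT}; what it gives up is that the saddle-point property is not established along the way in (a), but since the theorem only asserts that property in (b) --- where you do verify it directly --- nothing is lost. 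Your closing remark, that the passage from ``$\bar w$ minimizes $\mathcal L(\cdot,\bar\lambda,\bar\eta)+\delta(\cdot;\Theta)$'' to the split form \eqref{ldsub} is licensed by the continuity of the real-valued Lagrangian via Theorem \ref{NIR}(a), correctly handles the one step the paper leaves implicit.
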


\begin{proof} 
(a) Suppose that the KKT conditions hold for $\bar w\in \Omega_p$ and $(\bar \lambda, \bar\eta)\in \Omega_d$. 
First, by \eqref{ldsub}, we have
\begin{equation}\label{sdpr}
    \mathcal{L}(\bar w, \bar \lambda,\bar\eta)\leq \mathcal{L}(w, \bar \lambda, \bar\eta)\ \; \mbox{\rm for all }w\in \Theta.
\end{equation}
This proves the second inequality in \eqref{eq:saddle_point}.

Next, under the complementary slackness condition we also have $\mathcal{L}(\bar w, \bar \lambda, \bar\eta)=f(\bar w)$. 
Using this fact, $\lambda_ig_i(\bar w) \leq 0$ for all $i=1,\ldots, m$ and $\lambda\in\R^m$, and $\mathcal{A}\bar{w} - b = 0$, we can show that
\begin{equation}\label{sdpl}
    \mathcal{L}(\bar w, \lambda, \eta)=f(\bar w)+\sum_{i=1}^m \lambda_i g_i(\bar w)+\la \eta, A\bar w-b\ra\leq f(\bar w)=\mathcal{L}(\bar w, \bar \lambda, \bar \eta)\ \; \mbox{\rm for all }(\lambda, \eta)\in \Omega_d.
\end{equation}
This proves the first inequality of \eqref{eq:saddle_point}.
Thus $(\bar w, \bar \lambda, \bar \eta)$ is a saddle point of $\mathcal{L}$.
Consequently, $p=d$ by \eqref{supinf} from Theorem~\ref{LWD}; see  \cite{bmn24}. 

Using the KKT conditions and the converse implication of Theorem~\ref{KKT}, which does not require the Slater condition, we see that $\bar w$ is an optimal solution to the primal problem \eqref{Q2}. It follows from \eqref{sdpr}
that
\begin{equation*}
   \mathcal{L}(\bar w, \bar \lambda, \bar \eta)\leq \inf_{w\in \Theta}\mathcal{L}(w, \bar \lambda, \bar \eta)=\Hat{\mathcal{L}}(\bar \lambda, \bar \eta).
\end{equation*}
Hence, from \eqref{sdpl}, we can see that
\begin{equation*}
    \Hat{\mathcal L}(\lambda, \eta)\leq \mathcal{L}(\bar w, \lambda, \eta)\leq \mathcal{L}(\bar w, \bar \lambda, \bar \eta)\leq \Hat{\mathcal{L}}(\bar \lambda, \bar \eta)\; \mbox{\rm for all }(\lambda, \eta)\in \Omega_d,
\end{equation*}
which means that $(\bar \lambda, \bar \eta)$ is an optimal solution to the dual problem \eqref{LD}. 

(b) Under Lagrangian strong duality $p=d$, suppose that $\bar w$ is an optimal solution to the primal problem \eqref{Q2} and $(\bar{\lambda}, \bar \eta)$ is an optimal solution to the dual problem \eqref{LD}. Then, we have
\begin{equation*}
\begin{aligned}
    f(\bar w)=p=d=\Hat{\mathcal{L}}(\bar \lambda, \bar \eta)&=\inf_{w\in \Theta}\mathcal{L}(w, \bar \lambda, \eta)\leq \mathcal{L}(\bar w, \bar \lambda, \bar \eta)\\
    &=f(\bar w)+\sum_{i=1}^m \bar \lambda_i g_i(\bar w)+\la \bar \eta, A\bar w-b\ra\leq f(\bar w).
    \end{aligned}
\end{equation*}
Since $\mathcal{A}\bar{w} - b = 0$, the last inequality implies that $\sum_{i=1}^m \bar \lambda_i g_i(\bar w)=0$.
Since each term in this sum is non-positive, we obtain $\bar \lambda_i g_i(\bar w)=0$ for all $i=1, \ldots, m$. 
Consequently, we get
\begin{equation*}
    \inf_{w\in \Theta}\mathcal{L}(w, \bar \lambda, \bar \eta)=f(\bar w)=f(\bar w)+\sum_{i=1}^m \bar \lambda_i g_i(\bar w)=\mathcal{L}(\bar w, \bar \lambda, \bar \eta).
\end{equation*}
This condition means that $\mathcal{L}(\cdot, \bar \lambda, \bar \eta)$ has an absolute minimum on $\Theta$ at $\bar w$. 
By Fermat's rule, we have
\begin{equation*}
    0\in \partial_w\mathcal{L}(\bar w, \bar \lambda, \bar \eta)+N(\bar w; \Theta)=\partial f(\bar w)+\sum_{i=1}^m \bar \lambda_i\partial g_i(\bar w)+\mathcal A^*\bar \eta+N(\bar w; \Theta).
\end{equation*}
Therefore, the KKT conditions are satisfied. 
Finally, it is straightforward to prove that $(\bar w, \bar \lambda, \bar \eta)$ is a saddle point of the Lagrange function $\mathcal{L}$. 
\end{proof}

Our next goal is to establish a Lagrangian strong duality theorem for \eqref{Q2} under the Slater condition in Definition \ref{SlaterCQ2}.  To proceed, consider the following system with the unknown $w$:
\begin{eqnarray}\label{SP21}
\begin{cases}
f(w)<0, \\
g_i(w)\leq 0\; \ \mbox{\rm for all }i=1, \ldots, m, \\
\mathcal A w=b,\\
w\in \Theta.
\end{cases}
\end{eqnarray}
We also consider the following system  with the unknowns $\lambda=(\lambda_1, \ldots, \lambda_m)$ and $\eta$:
\begin{eqnarray}\label{SD21}
\begin{cases}
 f(w)+\sum_{i=1}^m\lambda_ig_i(w)+\la \eta, \mathcal Aw-b\ra\geq 0 \;  \ \mbox{\rm for all }w\in \Theta,\\
 \lambda_i\geq 0\; \mbox{\rm for all }i=1, \ldots, m, \;\; \eta\in \R^q.
\end{cases}
\end{eqnarray}

\begin{Lemma} \label{SPD2} Consider the systems \eqref{SP21} and \eqref{SD21}. 
Suppose that the Slater condition in Definition \ref{SlaterCQ2} is satisfied for \eqref{Q2}. 
Then, \eqref{SP21} has a solution if and only if \eqref{SD21} has no solution.
\end{Lemma}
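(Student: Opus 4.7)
The forward direction is a direct weak-duality computation: if $w$ solves \eqref{SP21} and $(\lambda,\eta)$ solves \eqref{SD21}, then substituting $w$ into the dual inequality yields $f(w)+\sum_{i=1}^m\lambda_i g_i(w)+\la\eta,\mathcal{A}w-b\ra<0$ (using $f(w)<0$, each $\lambda_i g_i(w)\leq 0$, and $\mathcal{A}w=b$), contradicting \eqref{SD21}.

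For the converse, my plan is a proper separation argument in the finite-dimensional space $\R^{1+m+q}$, with the degenerate case ruled out via the Slater point and the quasi-relative interior. Introduce the convex set
\[
C=\big\{(f(w)+r,\,g_1(w)+s_1,\ldots,g_m(w)+s_m,\,\mathcal{A}w-b):\,w\in\Theta,\,r>0,\,s_i\geq 0\big\}.
\]
Failure of \eqref{SP21} is equivalent to $0\notin C$, and since $\ri(C)\subseteq C$, also $0\notin\ri(C)$. By the proper separation theorem in finite dimensions, there exists a nonzero vector $(\lambda_0,\lambda_1,\ldots,\lambda_m,\eta)$ such that
\[
\lambda_0(f(w)+r)+\sum_{i=1}^m\lambda_i(g_i(w)+s_i)+\la\eta,\mathcal{A}w-b\ra\geq 0
\]
for every $(w,r,s)$ parametrizing $C$, and with strict inequality at some such point. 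Sending the slacks $r,s_i\to+\infty$ forces $\lambda_0\geq 0$ and $\lambda_i\geq 0$; sending them to $0^+$ produces the cleaner inequality
\[
(\ast)\qquad \lambda_0 f(w)+\sum_{i=1}^m\lambda_i g_i(w)+\la\eta,\mathcal{A}w-b\ra\geq 0\quad\text{for all }w\in\Theta.
\]
If $\lambda_0>0$, dividing by $\lambda_0$ immediately yields a solution of \eqref{SD21}.

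The crux is to rule out the degenerate case $\lambda_0=0$, and this is exactly where the quasi-relative interior assumption on the Slater point $u$ becomes decisive. Evaluating $(\ast)$ at $u$ with $\lambda_0=0$ gives $\sum_{i=1}^m\lambda_i g_i(u)\geq 0$; since $\lambda_i\geq 0$ and $g_i(u)<0$, this forces $\lambda=0$. We are then left with $\eta\neq 0$ and $\la\eta,\mathcal{A}w-b\ra\geq 0$ on $\Theta$, which, using $\mathcal{A}u=b$, rewrites as $\la\mathcal{A}^\ast\eta,w-u\ra\geq 0$ on $\Theta$, i.e., $-\mathcal{A}^\ast\eta\in N(u;\Theta)$. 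But $u\in\qri(\Theta)$ means $V:=\overline{\cone(\Theta-u)}$ is a linear subspace, so $N(u;\Theta)=V^\perp$ is itself a subspace, forcing $\mathcal{A}^\ast\eta\perp V\supseteq \Theta-u$ and consequently $\la\eta,\mathcal{A}w-b\ra=0$ identically on $\Theta$. The separating functional $(0,0,\eta)$ therefore vanishes on all of $C$, contradicting the strict-inequality clause of the proper separation. Hence $\lambda_0>0$, completing the proof. The main obstacle is this final step: without the quasi-relative interior hypothesis one cannot eliminate a nontrivial $\eta$ that is in the annihilator of $\mathcal{A}(\Theta-u)$, and the argument would stall at a useless multiplier.
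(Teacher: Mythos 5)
Your proof is correct, and its skeleton coincides with the paper's: the same convex ``image'' set (your $C$ is exactly the paper's $\Omega$, written with explicit slacks $r>0$, $s_i\ge 0$ instead of inequalities $f(w)<\gamma_0$, $g(w)\preceq\gamma$), the same proper separation of that set from the origin in $\R^{1+m+q}$, the same slack-direction argument for $\lambda_0,\lambda_i\ge 0$, and the same use of the Slater point to force $\lambda=0$ once $\lambda_0=0$ is assumed. The one place where you genuinely diverge is the final elimination of $\eta$. The paper pushes forward to the image space: it invokes \cite[Proposition~2.10]{bl-qri} to get $b=\mathcal A u\in\mathcal A(\qri\Theta)=\qri(\mathcal A(\Theta))$, so that $\overline{\cone(\mathcal A(\Theta)-b)}$ is a subspace on which the nonnegative functional $\la\eta,\cdot\ra$ must vanish. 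You instead pull back through the adjoint: $\la\eta,\mathcal Aw-b\ra\ge 0$ on $\Theta$ means $-\mathcal A^{*}\eta\in N(u;\Theta)$, and since $u\in\qri(\Theta)$ the set $\overline{\cone(\Theta-u)}$ is a subspace, so $N(u;\Theta)$ is its orthogonal complement and $\mathcal A^{*}\eta$ annihilates $\Theta-u$. Both routes land on $\la\eta,\mathcal Aw-b\ra\equiv 0$ on $\Theta$ and then contradict the properness clause. Your version buys self-containedness --- it uses only the definition of quasi-relative interior and the elementary identification of the normal cone with the polar of $\cone(\Theta-u)$, avoiding the Borwein--Lewis result on images of quasi-relative interiors under linear maps --- at the cost of no generality. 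A small stylistic plus on your side: you explicitly note that $0\notin C$ already gives $0\notin\ri(C)$, which is the hypothesis the finite-dimensional proper separation theorem actually requires.
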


\begin{proof} Suppose that \eqref{SP21} has a solution $w_0$.  Let us show that \eqref{SD21} has no solution. By contradiction, suppose that it has a solution $\lambda=(\lambda_1, \ldots, \lambda_m)\in \R^m_+$, $\eta\in \R^q$. 

On the one hand, from \eqref{SP21}, we have
\begin{equation*}
f(w_0)+\sum_{i=1}^m\lambda_ig_i(w_0)+\la \eta, Aw_0-b\ra\leq f(w_0)<0.
\end{equation*}
On the other, the first line of \eqref{SD21} shows that $f(w_0)+\sum_{i=1}^m\lambda_ig_i(w_0)+\la \eta, Aw_0-b\ra \geq 0$.
Both inequalities lead to a contradiction. Note that in this proof, the Slater condition is not required.

Now, suppose that \eqref{SD21} has no solution. Let us show that \eqref{SP21} has a solution. By contradiction,  suppose that \eqref{SP21} has no solution. Consider the set
\begin{equation*}
\Omega=\big\{(\gamma_0, \gamma, z)\in \R\times \R^m\times \R^q\; \big|\; f(w)< \gamma_0, g(w)\preceq  \gamma, \mathcal Aw-b= z\; \mbox{\rm for some }w\in \Theta\big\},
\end{equation*}
where $\gamma_0\in \R$, $\gamma=(\gamma_1, \ldots, \gamma_m)\in \R^m$, $z\in \R^q$, and $g(w)=(g_1(w), \ldots, g_m(w))$ for $w\in \mathcal H$. Here, the notation $g(w)\preceq  \gamma$ means that $g_i(w)\leq \gamma_i$ for all $i=1, \ldots, m$. 
Clearly, $\Omega$ is nonempty by the Slater condition, and  $(0, 0, 0)\notin\Omega$ because \eqref{SP21} has no solution. 
Moreover, the convexity of $\Omega$ follows from the convexity of $f,g_i$, and the linearity of $\mathcal A$.

By the convex proper separation theorem in the finite-dimensional space (see, e.g., \cite[Theorem 2.92]{bmn2022}), there exist $(\lambda_0, \lambda_1, \ldots, \lambda_m)\in \R^{m+1}_+$ and $\eta\in \R^q$ such that
\begin{equation}\label{dual separation2}
\lambda_0\gamma_0+\sum_{i=1}^m\lambda_i\gamma_i+\la \eta, z\ra\geq 0
\end{equation}
whenever $(\gamma_0, \gamma, z)\in \Omega$ with $\gamma=(\gamma_1, \ldots, \gamma_m)$. In addition, there exists $(\tilde{\gamma}_0, \tilde{\gamma}, \tilde{z})\in \Omega$ with $\tilde{\gamma}=(\tilde{\gamma}_1, \ldots, \tilde{\gamma}_m)$ such that
\begin{equation}\label{proper separation}
\lambda_0\tilde{\gamma}_0+\sum_{i=1}^m\lambda_i\tilde{\gamma}_i+\la \eta, \tilde{z}\ra > 0.
\end{equation}
By passing to the limit, observe  that \eqref{dual separation2} also holds whenever $(\gamma_0, \gamma, z)\in \Bar\Omega$. 
We can show that $\lambda_i\geq 0$ for all $i=0, 1, \ldots, m$. 
Indeed, fix any $\gamma_0>\max\{0,f(u)\}$, where $u$ is an element that satisfies the Slater condition, and observe that $(\gamma_0,0,\ldots,0)\in \Omega$. By  \eqref{dual separation2} we have $\lambda_0\gamma_0\geq 0$, and thus $\lambda_0\geq 0.$ To show $\lambda_1\geq 0,$ by a contradiction, suppose that $\lambda_1<0.$ If $\lambda_0=0$, then for any $\gamma_1>0>g_1(u),$ we can see that $(\gamma_0,\gamma_1,0,\ldots, 0)\in \Omega$ but \eqref{dual separation2} is not satisfied. 
If $\lambda_0>0,$  choosing $\gamma_1>-\frac{\lambda_0\gamma_0}{\lambda_1}>g_1(u),$ we have $(\gamma_0,\gamma_1,0,\ldots, 0)\in\Omega$ but  \eqref{dual separation2} is not satisfied. This contradiction shows that $\lambda_1\geq 0$.
By a similar way, we can show that $\lambda_i\geq0$ for  $i=2,\ldots,m .$ 

Now, taking any $w\in \Theta$, we see that $(f(w), g_1(w), \ldots, g_m(w), \mathcal Aw-b)\in \Bar{\Omega}$. 
Thus we have
\begin{equation}\label{csi1}
\lambda_0 f(w)+\sum_{i=1}^m \lambda_ig_i(w)+\la \eta,  \mathcal Ax-b\ra\geq 0\; \mbox{\rm for all }w\in \Theta.
\end{equation}
If $\lambda_0=0$, then we can use the element $u\in \mbox{\rm qri}(\Theta)$ such that $g_i(u)<0$ for all $i=1, \ldots, m$ and $\mathcal Au=b$ to get
\begin{equation*}
\sum_{i=1}^m \lambda_ig_i(u)+\la \eta, \mathcal Au-b\ra=\sum_{i=1}^m\lambda_ig_i(u)\geq 0.
\end{equation*}
This implies that $\lambda_i=0$ for all $i=1, \ldots, m$. 
Then, we get
\begin{equation}\label{sepc}
\la \eta, \mathcal Aw-b\ra\geq 0\; \mbox{\rm for all }w\in \Theta.
\end{equation}
Next, we will show that $\eta=0$. By \cite[Proposition 2.10]{bl-qri} we have
\begin{equation*}
    b=\mathcal Au\in \mathcal A(\mbox{\rm qri}\, \Theta)=\mbox{\rm qri}(\mathcal A(\Theta)).
\end{equation*}
This and \eqref{sepc} imply that
\begin{equation*}
    \la \eta, y\ra\geq 0\; \mbox{\rm for all }y \in \overline{\cone (\mathcal A(\Theta)-b)},
\end{equation*}
where the latter set is a linear subspace. 
Consequently, we obtain $\la \eta, y\ra=0$ for all $y\in \overline{\cone (\mathcal A(\Theta)-b)}$. By the definition of $\tilde{z},$ there exists $\tilde{w}\in\Theta\subset \mathcal H$ such that $\tilde{z}=\mathcal A\tilde{w}-b$. Then, we get
\begin{equation*}
\la \eta, \tilde{z}\ra=\la \eta , \mathcal A\tilde w-b\ra=0,
\end{equation*}
which yields a contradiction by \eqref{proper separation}.  Therefore, $\lambda_0>0$, and we can divide both sides of \eqref{csi1} to show that \eqref{SD21} has a solution, which is a contradiction. This completes the proof. 
\end{proof}

\begin{Theorem}\label{LSDE} 
Consider the convex optimization problem \eqref{Q2}.  
Suppose that the   Slater condition in Definition \ref{SlaterCQ2} is satisfied for this problem.  Then, Lagrangian strong duality holds, i.e., $p=d$.
\end{Theorem}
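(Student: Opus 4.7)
The plan is to combine weak duality (Theorem \ref{LWD}) with the alternative-type result of Lemma \ref{SPD2}. Since weak duality already gives $d \leq p$, the task reduces to establishing the reverse inequality $p \leq d$ under the Slater condition of Definition~\ref{SlaterCQ2}.

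First I would dispose of the trivial case $p = -\infty$: weak duality forces $d \leq p = -\infty$, so $d = p$. Observe also that the Slater condition ensures $\Omega_p \neq \emptyset$, so $p < +\infty$. Thus it remains to treat the case $p \in \mathbb R$. Here my plan is to approximate $p$ from below: fix any $\alpha \in \mathbb{R}$ with $\alpha < p$, and consider the modified function $\tilde f(w) = f(w) - \alpha$, which is still continuous and convex. By the very definition of $p$ as an infimum, there is no $w \in \Theta$ with $\tilde f(w) < 0$, $g_i(w) \leq 0$ for all $i$, and $\mathcal{A}w = b$; that is, the system \eqref{SP21} has no solution when $f$ is replaced by $\tilde f$.

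Next I would apply Lemma \ref{SPD2} to $\tilde f$ in place of $f$. The Slater condition from Definition \ref{SlaterCQ2} does not involve the objective, so it remains in force for this substituted problem. Lemma \ref{SPD2} then produces $(\bar\lambda,\bar\eta)\in \R^m_+\times \R^q = \Omega_d$ such that
\begin{equation*}
f(w) - \alpha + \sum_{i=1}^m \bar\lambda_i g_i(w) + \langle \bar\eta, \mathcal{A}w - b\rangle \geq 0\ \mbox{\rm for all } w\in \Theta.
\end{equation*}
Rearranging, $\mathcal{L}(w,\bar\lambda,\bar\eta) \geq \alpha$ for every $w \in \Theta$, so taking the infimum over $w \in \Theta$ gives $\hat{\mathcal{L}}(\bar\lambda,\bar\eta) \geq \alpha$, and therefore $d \geq \alpha$.

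Since $\alpha$ was any real number strictly less than $p$, letting $\alpha \uparrow p$ yields $d \geq p$, which together with weak duality gives $p = d$. The main subtlety, and the only non-routine step, is verifying that Lemma \ref{SPD2} is applicable to $\tilde f = f - \alpha$; this is immediate once one checks that the Slater condition is a statement purely about $\Theta$, the $g_i$'s, and $\mathcal{A}$, none of which are affected by shifting the objective by a constant. All remaining work is algebraic manipulation of the Lagrangian.
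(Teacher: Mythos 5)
Your proposal is correct and follows essentially the same route as the paper: dispose of $p=-\infty$ via weak duality, observe the infeasibility of the shifted system, and invoke Lemma \ref{SPD2} to produce multipliers giving $d\geq p$. The only (harmless) difference is that the paper applies the lemma directly to $f-p$, whose associated system already has no solution since $p$ is the infimum, so your detour through $\alpha<p$ and the limit $\alpha\uparrow p$ is not needed.
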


\begin{proof} 
If $p=-\infty$, then $d=-\infty$ by Theorem \ref{LWD}. Thus, we can assume that $p\in \R$. 
Then, the system
\begin{equation*}
\begin{cases}
f(w)-p<0,\\
g_i(w)\leq 0 \; \mbox{\rm for all }i=1, \ldots, m,\\
\mathcal Aw=b,\\
w\in \Theta
\end{cases}
\end{equation*}
has no solution. By Lemma \ref{SPD2},  there exist $\lambda=(\lambda_1, \ldots, \lambda_m)\in \R^m_{+}$ and $\eta\in \R^q$ such that
\begin{equation*}
f(w)-p+\sum_{i=1}^m \lambda_i g_i(w)+\la \eta, \mathcal Aw-b\ra\geq \; \mbox{\rm for all }w\in \Theta.
\end{equation*}
This implies that
\begin{equation*}
d\geq \Hat{\mathcal{L}}(\lambda, \eta)=\inf_{w\in \Theta}\big(f(w)+\sum_{i=1}^m \lambda_i g_i(w)+\la \eta, \mathcal Aw-b\ra\big) \geq p.
\end{equation*}
Combining this with Lagrangian weak duality from Theorem \ref{LWD} justifies the conclusion. 
\end{proof}

\begin{Remark}\label{KKTR1} {\rm 
Similar to Corollary \ref{KKT_Inequality}, if the equality constraint $\mathcal Aw=b$ is absent, then the conclusion of Theorem \ref{LSDE} holds under the Slater condition: there 
exists $u\in \mathcal H$ such that
 \begin{enumerate}
 \item $u\in \Theta$,
 \item $g_i(u)<0$ for all $i=1, \ldots, m$.
 \end{enumerate}
Furthermore, we can use a similar way to obtain a Lagrangian strong duality theorem for the optimization problem }\eqref{Q3} with convex inequality constraints and affine inequality constraints. 

\end{Remark}

\section{The Support Vector Machine Problem and Its Extensions}\label{svma}

\setcounter{equation}{0}

In this section, we utilize the results presented in the previous sections to study extensions of the classical SVM model as a case study. Given that our theory applies to Hilbert spaces, it enables us to analyze SVM models employing kernel representations in a Hilbert space.

\subsection{Hard-margin constrained SVM via Lagrangian duality}\label{svma hard margin}
To model the constrained SVM, we first present the well-known formulas for the distance from a point to a hyperplane and the metric projection of the point onto the hyperplane in a Hilbert space. These formulas can be derived using the method of Lagrange multipliers or Lagrangian duality.

\begin{Lemma}\label{dist hp} Let $\mathcal H$ be a  Hilbert space, and let $A$ be a hyperplane in $\mathcal H$ given by
\begin{equation*}
    A=\big\{x\in \mathcal H\; \big |\; \la w, x\ra+b=0\big\},
\end{equation*}
where $w\in \mathcal H\setminus\{0\}$ and $b\in \R.$ Then for any $x_0\in \mathcal H$ we have
\begin{equation*}
    d(x_0; A)=\frac{|\la w, x_0\ra+b|}{\|w\|}\;\ \mbox{\rm and }\ \mathcal P(x_0; A)=\big\{x_0-\left(\frac{\la w, x_0\ra+b}{\|w\|}\right)\frac{w}{\|w\|}\big\}.
\end{equation*}
\end{Lemma}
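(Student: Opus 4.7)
The plan is to derive both formulas simultaneously by casting the projection as the convex optimization problem
\begin{equation*}
\min_{x\in\mathcal H}\; \tfrac{1}{2}\|x-x_0\|^2 \quad \text{subject to}\quad \langle w,x\rangle+b=0,
\end{equation*}
which fits the framework of \eqref{Q2} with $f(x)=\tfrac{1}{2}\|x-x_0\|^2$, $\Theta=\mathcal H$, no inequality constraints, and the single linear equation $\mathcal A x := \langle w,x\rangle = -b$. The Slater condition of Definition \ref{SlaterCQ2} is trivially satisfied: take any $u\in A$, and since $w\neq 0$, such a $u$ exists, while $\qri(\mathcal H)=\mathcal H$. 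Therefore Theorem \ref{KKT_2} applies, yielding a multiplier $\eta\in\R$ such that the minimizer $\bar x$ satisfies $0 = \nabla f(\bar x) + \eta w = (\bar x-x_0) + \eta w$, together with the primal feasibility $\langle w,\bar x\rangle + b = 0$.

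From the stationarity condition I would read off $\bar x = x_0 - \eta w$. Substituting into the constraint gives $\langle w,x_0\rangle - \eta\|w\|^2 + b = 0$, so
\begin{equation*}
\eta = \frac{\langle w,x_0\rangle+b}{\|w\|^2}, \qquad \bar x = x_0 - \frac{\langle w,x_0\rangle+b}{\|w\|^2}\,w.
\end{equation*}
Computing the norm of the residual,
\begin{equation*}
\|x_0-\bar x\| = \frac{|\langle w,x_0\rangle+b|}{\|w\|^2}\,\|w\| = \frac{|\langle w,x_0\rangle+b|}{\|w\|},
\end{equation*}
gives the distance formula. Since $f$ is strictly convex on the affine constraint set, $\bar x$ is the unique minimizer, so $\mathcal P(x_0;A)=\{\bar x\}$, and rewriting $\bar x = x_0 - \tfrac{\langle w,x_0\rangle+b}{\|w\|}\cdot\tfrac{w}{\|w\|}$ matches the stated projection expression (with the convention that the scalar in front of $w/\|w\|$ carries the sign of $\langle w,x_0\rangle + b$, which is absorbed into the orientation of the unit normal).

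There is really no serious obstacle here: the Slater condition is free, strict convexity guarantees uniqueness, and the KKT system is a single linear equation in $\eta$. The only point requiring mild care is the sign convention in the projection formula, which I would address by noting that the displayed expression is to be read as $\bar x = x_0 - \tfrac{\langle w,x_0\rangle + b}{\|w\|^2}w$, whose magnitude in the $w/\|w\|$ direction is exactly $d(x_0;A)$.
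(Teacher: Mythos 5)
Your proof is correct and follows exactly the route the paper indicates (the lemma is stated without proof, only with the remark that the formulas ``can be derived using the method of Lagrange multipliers or Lagrangian duality''), namely applying Theorem \ref{KKT_2} to the projection problem $\min \tfrac12\|x-x_0\|^2$ over the hyperplane, where the Slater condition is automatic and the KKT system is a single linear equation in the multiplier. Your closing remark on the sign is apt: as displayed, the projection formula only literally matches when $\la w, x_0\ra+b\ge 0$, and the unambiguous form your derivation produces, $\bar x=x_0-\frac{\la w, x_0\ra+b}{\|w\|^2}\,w$, is the correct one.
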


Next, we present another formula for computing the distance between two parallel hyperplanes in a Hilbert space. 

Recall that the distance between two sets $\Omega_1$ and $\Omega_2$ in a Hilbert space $\mathcal H$ is defined by
\begin{equation*}
    d(\Omega_1, \Omega_2)=\inf_{x_1, \in \Omega_1, \, x_2\in \Omega_2}\|x_1-x_2\|.
\end{equation*}

\begin{Corollary}\label{twohp} Let $A_1$ and $A_2$ be two hyperplanes in a Hilbert space $\mathcal H$ given by
\begin{equation*}
    A_1=\big\{x\in \mathcal H\; \big |\; \la w, x\ra+b_1=0\big\}\ \; \mbox{\rm and } \ A_2=\big\{x\in \mathcal H\; \big |\; \la w, x\ra+b_2=0\big\},
\end{equation*}
where $w\in \mathcal{H}\setminus\{0\}$ and $b_1, b_2\in \R$. Then we have
\begin{equation*}
    d(A_1, A_2)=\frac{|b_1-b_2|}{\|w\|}.
\end{equation*}
\end{Corollary}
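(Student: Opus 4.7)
The plan is to reduce the corollary directly to Lemma \ref{dist hp} (the point-to-hyperplane distance formula) by exploiting the fact that $A_1$ and $A_2$ share the same normal vector $w$.

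First I would unpack the definition of $d(A_1,A_2)$ as a double infimum and observe
\begin{equation*}
d(A_1,A_2) \;=\; \inf_{x_1\in A_1}\inf_{x_2\in A_2}\|x_1-x_2\| \;=\; \inf_{x_1\in A_1} d(x_1;A_2).
\end{equation*}
Then for any fixed $x_1\in A_1$, the defining equation $\langle w,x_1\rangle + b_1 = 0$ gives $\langle w,x_1\rangle = -b_1$, so Lemma \ref{dist hp} applied to the hyperplane $A_2$ yields
\begin{equation*}
d(x_1;A_2) \;=\; \frac{|\langle w,x_1\rangle + b_2|}{\|w\|} \;=\; \frac{|b_2-b_1|}{\|w\|}.
\end{equation*}
Since this value is independent of the choice of $x_1\in A_1$, taking the infimum over $x_1\in A_1$ gives exactly $|b_1-b_2|/\|w\|$, which is the desired formula.

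There is essentially no obstacle here; the only minor points are to note that $A_1$ is nonempty (any point $x_1 = -\frac{b_1}{\|w\|^2} w$ works since $w\neq 0$) so the infimum is taken over a nonempty set, and to justify the swap of the double infimum with the point-to-set distance, which is immediate from its definition. The corollary is really a one-line consequence of Lemma \ref{dist hp} once one remarks that on $A_1$ the quantity $\langle w,x_1\rangle$ is constantly equal to $-b_1$.
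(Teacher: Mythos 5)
Your proof is correct, and it is a slightly cleaner route than the one in the paper. Both arguments obtain the upper estimate $d(A_1,A_2)\leq |b_1-b_2|/\|w\|$ by applying Lemma \ref{dist hp} to a point of $A_1$ and the hyperplane $A_2$. Where you differ is in the lower estimate: the paper proves it from scratch, taking arbitrary $x_1\in A_1$, $x_2\in A_2$ and using the Cauchy--Schwarz inequality $|b_1-b_2|=|\la w, x_1-x_2\ra|\leq\|w\|\,\|x_1-x_2\|$ before passing to the infimum. You instead observe that $d(x_1;A_2)$ is \emph{constant} as $x_1$ ranges over $A_1$ (because $\la w,x_1\ra\equiv -b_1$ there), so writing $d(A_1,A_2)=\inf_{x_1\in A_1} d(x_1;A_2)$ collapses both inequalities into a single computation. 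Your version leans entirely on the exact distance formula of Lemma \ref{dist hp} and is shorter; the paper's version is marginally more self-contained for the lower bound, needing only Cauchy--Schwarz rather than the full strength of the lemma. Your remarks on the nonemptiness of $A_1$ and the identification of the double infimum with an iterated one are the right things to check and are handled correctly.
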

\begin{proof}
Take any $x_0\in A_1$. 
Then,  $\la w, x_0\ra+b_1=0$, leading to $\la w, x_0\ra=-b_1$. By Lemma~\ref{dist hp} we have
\begin{equation*}
    d(x_0; A_2)=\frac{|\la w, x_0\ra+b_2|}{\|w\|}=\frac{|-b_1+b_2|}{\|w\|}=\frac{|b_1-b_2|}{\|w\|}.
\end{equation*}
 It follows that
\begin{equation*}
    d(A_1, A_2)=\inf_{x_1\in A_1, x_2\in A_2}\|x_1-x_2\|\leq d(x_0; A_2)=\frac{|b_1-b_2|}{\|w\|}.
\end{equation*}
Now, for any $x_1\in A_1$ and $x_2\in A_2$ we have
\begin{equation*}
    \la w, x_1\ra+b_1=0\; \ \mbox{\rm and }\ \la w, x_2\ra+b_2=0.
\end{equation*}
It follows that
\begin{equation*}
    |b_1-b_2|=|\la w, x_1\ra-\la w, x_2\ra|=|\la w, x_1-x_2\ra|\leq \|w\|\,\|x_1-x_2\|,
\end{equation*}
which yields the upper estimate
 \begin{equation*}
     \frac{|b_1-b_2|}{\|w\|}\leq \|x_1-x_2\|.
 \end{equation*}
Taking the infimum with respect to $x_1\in A_1$ and $x_2\in A_2$ gives us
\begin{equation*}
     \frac{|b_1-b_2|}{\|w\|}\leq \inf_{x_1\in A_1, x_2\in A_2}\|x_1-x_2\|=d(A_1, A_2).
\end{equation*}
Combining both inequalities completes our proof. 
\end{proof}

Let $\{\mathbf x_i\}_{i=1}^m$ be elements of $\mathcal H$ that belong to two groups. Here, we can treat $\mathbf{x}_i$ as the kernel representation of a data point $x_i$ through a feature mapping $\varphi$ as $\mathbf{x}_i = \phi(x_i)$, which often lies in a Hilbert space.
If $\mathbf x_i$ belongs to the first group, we label it by setting $y_i=1$, and if $\mathbf x_i$ belongs to the second group, we label it by setting $y_i=-1$. In this way, we form a training set $S=\{(\mathbf x_i, y_i)\}_{i=1}^m$, where $\mathbf x_i\in \mathcal H$ and $y_i\in\{-1, 1\}$. 
The classical SVM task seeks a hyperplane $\la w, x\ra+b=0$ with the \emph{largest margin} that separates two groups.  The margin is determined by two parallel hyperplanes with equal distance from the hyperplane $\la w, x\ra+b=0$ given by $\la w, x\ra+b=c$ and $\la w, x\ra+b=-c$, where $c\neq 0$.
Dividing each side of these equations by $c$, we can always assume that $c=1$. By Corollary~\ref{twohp}, the margin is computed as half of the distance between these two hyperplanes and is given by $m=\frac{1}{2}d=\frac{1}{\|w\|}$.

In order to satisfy the separation requirement, one can impose  the following conditions:
\begin{equation*}
y_i(\la \mathbf x_i, w\ra +b)\geq 1\; \mbox{\rm for all }i=1,\ldots,m.
\end{equation*}
The SVM task is now formulated as the following constrained maximization problem:
\begin{equation*}
\left\{\begin{array}{ll}
{\displaystyle\max_{ w \in \mathcal{H},\, b\in \R}} & \frac{1}{\|w\|}, \vspace{1ex}\\
\mathrm{s.t.} &  y_i(\la \mathbf x_i,w\ra+b)\geq 1\; \mbox{\rm for all }i=1,\ldots,m,
\end{array}\right.
\end{equation*}
which is equivalent to the minimization problem:
\begin{equation}\label{SVM0}
\left\{\begin{array}{ll}
{\displaystyle\min_{ w \in \mathcal{H},\, b\in \R}} & \frac{1}{2}\|w\|^2, \vspace{1ex}\\
\mathrm{s.t.} &  y_i(\la \mathbf x_i,w\ra+b)\geq 1\; \mbox{\rm for all }i=1,\ldots,m.
\end{array}\right.
\end{equation}
Note that classical SVM models do not require any constraint on $w$, making them easier to develop numerical methods for solving them.

To study the SVM problem \eqref{SVM0}, we often combine the bias term with $w$ to obtain a strongly convex problem.
The follow lemma shows that, by doing so, we obtain an approximate model up to any accuracy.

\begin{Lemma}\label{SVM0_approx}
Given any $\gamma > 0$, consider the following problem:
\begin{equation}\label{SVM0a}
\left\{\begin{array}{ll}
{\displaystyle\min_{ \hat{w} \in \hat{\mathcal{H}} }} & \frac{1}{2}\|\hat{w}\|^2, \vspace{1ex}\\
\mathrm{s.t.} &  y_i \la \hat{\mathbf x}_i, \hat{w} \ra \geq 1\; \mbox{\rm for all }i=1,\ldots,m,
\end{array}\right.
\end{equation}
where $\hat{w} = (w, z)$, $\hat{\mathcal{H}} = \mathcal{H}\times \R$, and $\hat{\mathbf{x}}_i = \big[\mathbf{x}_i, \frac{1}{\gamma}\big]$ for all $i=1,\ldots, m$.
Then, $(w, b)$ is a feasible solution to \eqref{SVM0} iff $\hat{w} = (w, \gamma b)$ is a feasible solution to \eqref{SVM0a}.
In addition, if the bias term $b$ in \eqref{SVM0} is in a compact interval $l_b \leq b \leq u_b$, then we have 
\begin{equation*}
\frac{1}{2}\Vert w \Vert^2 \leq \frac{1}{2}\Vert \hat{w}\Vert^2 \leq \frac{1}{2}\Vert w \Vert^2 + \frac{\gamma^2}{2}\max\{l_b^2, u_b^2\}.
\end{equation*}
\end{Lemma}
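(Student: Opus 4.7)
The plan is to verify both assertions by a direct unpacking of the definitions, since everything reduces to the inner product on the product Hilbert space $\hat{\mathcal{H}} = \mathcal H \times \R$ recalled in Section 2, namely $\la (w_1,\lambda_1),(w_2,\lambda_2)\ra = \la w_1,w_2\ra + \lambda_1\lambda_2$.

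For the first assertion, I would substitute $\hat{w} = (w,\gamma b)$ and $\hat{\mathbf{x}}_i = (\mathbf{x}_i, 1/\gamma)$ into the product inner product to get
\begin{equation*}
\la \hat{\mathbf{x}}_i, \hat{w}\ra = \la \mathbf{x}_i, w\ra + \tfrac{1}{\gamma}\cdot \gamma b = \la \mathbf{x}_i, w\ra + b.
\end{equation*}
Multiplying by $y_i$ shows that the constraint $y_i\la \hat{\mathbf{x}}_i,\hat{w}\ra \geq 1$ of \eqref{SVM0a} is literally the constraint $y_i(\la \mathbf{x}_i,w\ra + b)\geq 1$ of \eqref{SVM0}. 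Since the correspondence $(w,b)\leftrightarrow (w,\gamma b)$ is a bijection (because $\gamma>0$), this yields the claimed equivalence of feasibility.

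For the second assertion, I would use the same inner product formula on the diagonal to compute the norm:
\begin{equation*}
\Vert \hat{w}\Vert^2 = \la (w,\gamma b),(w,\gamma b)\ra = \Vert w\Vert^2 + \gamma^2 b^2,
\end{equation*}
so $\tfrac{1}{2}\Vert\hat{w}\Vert^2 = \tfrac{1}{2}\Vert w\Vert^2 + \tfrac{\gamma^2}{2} b^2$. The lower bound $\tfrac{1}{2}\Vert w\Vert^2 \leq \tfrac{1}{2}\Vert\hat{w}\Vert^2$ is immediate from $\gamma^2 b^2 \geq 0$. For the upper bound, the hypothesis $l_b \leq b \leq u_b$ forces $b^2 \leq \max\{l_b^2, u_b^2\}$ (by a trivial case analysis on the sign of $b$), giving $\tfrac{\gamma^2}{2}b^2 \leq \tfrac{\gamma^2}{2}\max\{l_b^2,u_b^2\}$, which is exactly what is claimed.

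There is really no serious obstacle here; the lemma is essentially a bookkeeping observation showing that absorbing the bias into an enlarged feature vector changes the objective only by the additive term $\tfrac{\gamma^2}{2}b^2$, which can be made arbitrarily small by choosing $\gamma$ small whenever $b$ lies in a bounded interval. The only minor subtlety is making sure that the direction of the bijection $(w,b)\leftrightarrow (w,\gamma b)$ is clearly spelled out so that feasibility transfers both ways, and that the final estimate is stated uniformly in $b$ over $[l_b,u_b]$ rather than pointwise.
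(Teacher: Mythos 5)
Your proof is correct and follows essentially the same route as the paper: expand the product-space inner product to identify the two constraint sets, compute $\|\hat{w}\|^2=\|w\|^2+\gamma^2b^2$, and bound $b^2$ by $\max\{l_b^2,u_b^2\}$ on the interval. No gaps; your version is in fact slightly more careful than the paper's, whose final display omits the $\frac{1}{2}\|w\|^2$ term on the right-hand side.
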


\begin{proof}
It is obvious to write $\la \mathbf{x}_i, w \ra + b = \la \mathbf{x}_i, w \ra + \frac{1}{\gamma} (\gamma b) = \la \big( \mathbf{x}_i, \frac{1}{\gamma}\big), (w, \gamma b) \ra = \la \hat{\mathbf{x}}_i, \hat{w} \ra$.
Therefore, $(w, b)$ satisfies the constraint $y_i(\la \mathbf{x}_i, w \ra + b) \geq 1$ iff $\hat{w} = (w, \gamma b)$ satisfies the constraint $y_i\la \hat{\mathbf{x}}_i, \hat{w} \ra \geq 1$.

In addition, we have $\Vert \hat{w} \Vert^2 = \Vert w \Vert^2 + \gamma^2b^2$. Thus, if  $b \in [l_b, u_b]$, then 
\begin{equation*}
\frac{1}{2}\Vert w \Vert^2 \leq \frac{1}{2}\Vert \hat{w} \Vert^2 = \frac{1}{2}\Vert w \Vert^2 + \frac{\gamma^2b^2}{2} \leq \frac{\gamma^2}{2}\max\{l_b^2, u_b^2\},
\end{equation*}
which completes the proof. 
\end{proof}

In this section, we consider the {\em constrained SVM} given by

\begin{equation}\label{SVMP1}
\left\{\begin{array}{ll}
{\displaystyle\min_{ w \in \mathcal{H}}} & \frac{1}{2}\|w\|^2, \vspace{1ex}\\
\mathrm{s.t.} & y_i(\la \mathbf x_i,w\ra)\geq 1\; \mbox{\rm for all }i=1,\ldots,m, \vspace{1ex}\\
&w\in \Theta,
\end{array}\right.
\end{equation}
where $\Theta$ is a nonempty closed convex set in $\mathcal H$.
For the hard-margin constrained SVM problem, we assume $\Theta$ has a 
non-empty intersection with the set of points satisfying the inequality 
constraints in~\eqref{SVMP1};  this assumption may be dropped in the next 
subsection when we turn to the soft-margin constrained SVM model.
As mentioned earlier, this new model not only captures the optional projection
step in the \emph{Pegasos} \cite{Pegasos}, but also provides greater control
over the weight vector $w$.

To continue, we apply Lagrangian duality from Subsection 
\ref{sec:Lag_duality_for_cvx} to study the constrained SVM \eqref{SVMP1}. The 
Lagrange function is given by
\begin{equation*}
\mathcal{L}(w, \lambda)=\frac{1}{2}\|w\|^2+\sum_{i=1}^m \lambda_i (1-y_i\la w, \mathbf x_i\ra), \; w\in \Theta, \; \lambda\in \R^m.
\end{equation*}
The Lagrange dual function is
\begin{equation*}
\begin{aligned}
    \Hat{\mathcal{L}}(\lambda)=\inf_{w\in \Theta}\mathcal{L}(w, \lambda), \; \lambda\in \R^m_+.
      \end{aligned}
\end{equation*}
Solving the optimality condition of this problem
\begin{equation*}
    0\in  \nabla_w\mathcal{L}(w, \lambda)+N(w; \Theta)=w-\sum_{i=1}^m\lambda_i y_i\mathbf x_i+N(w; \Theta),
\end{equation*}
we obtain (see \cite[Proposition 1.69]{bmn} which holds in Hilbert spaces)
\begin{equation}\label{PDR}
    w = w(\lambda) = \mathcal P\Big(\sum_{i=1}^m \lambda_iy_i\mathbf x_i; \Theta \Big).
\end{equation}
Thus, the Lagrange dual function is given by
\begin{equation*}
\arraycolsep=0.2em
\begin{array}{lcl}
 \Hat{\mathcal{L}}(\lambda) &= & \mathcal{L}(w(\lambda), \lambda) \vspace{1ex}\\
 &= & \frac{1}{2}\|w(\lambda)\|^2 - \sum_{i=1}^m\lambda_i y_i\la\mathbf{x}_i, w(\lambda)\ra + \sum_{i=1}^m\lambda_i \vspace{1ex}\\
 & = & \frac{1}{2}\| \sum_{i=1}^m\lambda_iy_i\mathbf{x}_i - w(\lambda)\|^2 - \frac{1}{2}\big\| \sum_{i=1}^m\lambda_iy_i\mathbf{x}_i \big\|^2 + \sum_{i=1}^m\lambda_i \vspace{1ex}\\
 & = & - \frac{1}{2} \sum_{i=1}^m\sum_{j=1}^m\lambda_i\lambda_j y_iy_j\la \mathbf{x}_i, \mathbf{x}_j\ra + \sum_{i=1}^m\lambda_i + \frac{1}{2}\Big\| \sum_{i=1}^m\lambda_iy_i\mathbf{x}_i - \mathcal P\Big(\sum_{i=1}^m \lambda_iy_i\mathbf x_i; \Theta \Big) \Big\|^2 \vspace{1ex}\\
 & = & -\frac{1}{2}\sum_{i=1}^m \sum_{j=1}^m \lambda_i\lambda_j y_i y_j\la \mathbf x_i, \mathbf x_j\ra+\sum_{i=1}^m \lambda_i+ \frac{1}{2}\Big ( d\Big(\sum_{i=1}^m \lambda_i y_i \mathbf x_i; \Theta \Big)\Big)^2.
 \end{array}
\end{equation*}
It follows that the Lagrange dual problem  of \eqref{SVMP1} is the 
maximization problem
\begin{equation}\label{SVMD1}
\left\{\begin{array}{ll}
{\displaystyle\max}&\Hat{\mathcal L}(\lambda)= -\frac{1}{2}\sum_{i=1}^m \sum_{j=1}^m \lambda_i\lambda_j y_i y_j\la \mathbf x_i, \mathbf x_j\ra+\sum_{i=1}^m \lambda_i + \frac{1}{2} \Big ( d \Big(\sum_{i=1}^m \lambda_i y_i \mathbf x_i; \Theta \Big)\Big)^2, \vspace{1ex}\\
\mathrm{s.t.} & \lambda=(\lambda_1, \ldots, \lambda_m)\in \R^m_+.
\end{array}\right.
\end{equation}
This problem is equivalent to the minimization problem
\begin{equation}\label{SVMD11}
\left\{\begin{array}{ll}
{\displaystyle\min}&\ph(\lambda)= \frac{1}{2}\sum_{i=1}^m \sum_{j=1}^m \lambda_i\lambda_j y_i y_j\la \mathbf x_i, \mathbf x_j\ra-\sum_{i=1}^m \lambda_i - \frac{1}{2} \Big ( d \Big(\sum_{i=1}^m \lambda_i y_i \mathbf x_i; \Theta \Big)\Big)^2, \vspace{1ex}\\
\mathrm{s.t.} & \lambda=(\lambda_1, \ldots, \lambda_m)\in \R^m_+.
\end{array}\right.
\end{equation}
If we let $\mathbf{e}$ be a vector of all ones in $\R^m$ and
$\mathcal{A}:\R^m\to\mathcal H$ be the linear mapping defined by
\begin{equation*}
    \mathcal{A}(\lambda)=\sum_{i=1}^m \lambda_i y_i \mathbf x_i, \ \; \lambda=(\lambda_1, \ldots, \lambda_m)\in \R^m,
\end{equation*}
we can express~\eqref{SVMD11} in the more compact form
\begin{equation}\label{SVMD12}
\left\{\begin{array}{ll}
{\displaystyle\min}&\ph(\lambda)= \frac{1}{2}\|\mathcal A\lambda\|^2-\la\mathbf{e},\lambda\ra - \frac{1}{2} \big ( d(\mathcal A\lambda; \Theta)\big)^2, \vspace{1ex}\\
\mathrm{s.t.} & \lambda=(\lambda_1, \ldots, \lambda_m)\in \R^m_+.
\end{array}\right.
\end{equation}

\begin{Remark} \label{Dual_grad}{\rm (a) In the case where there is no 
constraint $\Theta$, i.e., $\Theta=\mathcal{H}$, it is obvious that
\begin{equation*}
d\Big(\sum_{i=1}^m \lambda_i y_i \mathbf x_i; \Theta \Big)=0 
\end{equation*}
in \eqref{SVMD1}. Thus, this problem reduces to the well-known Lagrange dual problem of the classical SVM. To the best of our knowledge, the Lagrange dual problem \eqref{SVMD1} has not been introduced in the literature.

(b) The gradient of the dual function $\ph$ in \eqref{SVMD11}  is given by 
\begin{equation*}
\nabla\ph(\lambda) = \mathcal A^*\circ \mathcal A\lambda- \mathbf{e}-\mathcal{A}^*\big(\mathcal A\lambda-\mathcal{P}(\mathcal A\lambda; \Theta)\big)=\mathcal A^*(\mathcal P(\mathcal A\lambda;\Theta))-\mathbf{e}.
\end{equation*}
We can show that $\nabla \ph$ is Lipschitz continuous on $\R^m$. 
This property is important to develop gradient and accelerated gradient-type methods for solving \eqref{SVMD1}, and hence the primal problem \eqref{SVMP1}.}
\end{Remark}

We say that a set $\{a_1, \ldots, a_m\}$ in $\mathcal H$ is positively linearly independent if we have the implication
\begin{equation*}
    \big[\sum_{i=1}^m\lambda_ia_i=\sum_{i=1}^m \gamma_ia_i]\Longrightarrow \big[\lambda_i=\gamma_i\; \mbox{\rm for all }i=1, \ldots, m]
\end{equation*}
whenever $\lambda_i, \gamma_i\geq 0$ for all $i=1, \ldots, m$. Obviously, if $\{a_1, \ldots, a_m\}$ is linearly independent, then this set is positively linearly independent.

In what follows, we use the  separability assumption below for the constrained SVM model:
\begin{equation}\label{SVMS}
    \exists u\in \qri(\Theta)\; \mbox{\rm such that }y_i\la u, \mathbf x_i\ra\geq 1\; \mbox{\rm for all }i=1, \ldots, m.
\end{equation}

\begin{Theorem} \label{SVMA} Consider the primal SVM problem \eqref{SVMP1} and its dual form \eqref{SVMD1} under the separability assumption \eqref{SVMS}. 
Then, we have the following conclusions:
\begin{enumerate}
\item The primal SVM problem has a unique optimal solution.
\item Lagrangian strong duality holds.
\item The dual problem always has an optimal solution.
\item If $\Theta=\mathcal H$, then the dual problem has a unique optimal solution if the set $\{y_1\mathbf x_1, \ldots, y_m\mathbf x_m\}$ is positively linearly independent.
\end{enumerate}
\end{Theorem}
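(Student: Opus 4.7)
The plan is to address the four assertions in order, each following from machinery developed earlier in the excerpt.

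For (a), I would observe that the objective $\tfrac{1}{2}\|w\|^2$ is continuous, strongly convex, and coercive on $\mathcal{H}$, while the feasible set is the intersection of finitely many closed half-spaces with the closed convex set $\Theta$, hence itself closed and convex. The separability assumption \eqref{SVMS} places an element of $\qri(\Theta)\subset\Theta$ in this feasible set, so it is nonempty. Coercivity forces any minimizing sequence to be bounded; extracting a weakly convergent subsequence (the feasible set is weakly closed since closed and convex) and using the weak lower semicontinuity of $\tfrac{1}{2}\|\cdot\|^2$ produces a minimizer, and strong convexity forces uniqueness.

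For (b) and (c), I would recognize \eqref{SVMP1} as an instance of \eqref{Q3} in which the convex inequality constraints are absent and the $m$ affine inequalities are $h_i(w) = 1 - y_i\langle \mathbf{x}_i, w\rangle \leq 0$. The separability assumption \eqref{SVMS} is precisely the weakened Slater condition of Remark \ref{KKTRn}(b), which allows nonstrict inequalities when the constraints are affine, while condition (a) is vacuous. Invoking the strong-duality analogue discussed in Remark \ref{KKTR1} for \eqref{Q3} yields $p = d$, proving (b). Combining the primal minimizer $\bar w$ from (a) with Theorem \ref{kttgpc} produces Lagrange multipliers $\bar\lambda \in \R^m_+$ satisfying the KKT conditions, and Theorem \ref{lsd1}(a) then certifies that $\bar\lambda$ solves the dual, proving (c).

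For (d), set $\Theta = \mathcal{H}$ so that the squared-distance term in \eqref{SVMD11} vanishes and $\varphi(\lambda) = \tfrac{1}{2}\lambda^\top Q\lambda - \mathbf{e}^\top\lambda$ with $Q_{ij} = y_iy_j\langle \mathbf{x}_i, \mathbf{x}_j\rangle$. Writing $\mathcal{A}\lambda = \sum_i \lambda_iy_i\mathbf{x}_i$, one has $\lambda^\top Q\lambda = \|\mathcal{A}\lambda\|^2$. The decomposition $\mu_i = \max(\mu_i, 0) - \max(-\mu_i, 0)$ shows that positive linear independence of $\{y_i\mathbf{x}_i\}$ coincides with ordinary linear independence, so $\ker\mathcal{A} = \{0\}$ and $Q$ is positive definite on $\R^m$. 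Then $\varphi$ is strictly convex, so it admits at most one minimizer on $\R^m_+$; combined with existence from (c), the dual optimum is unique.

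The one delicate point is parts (b) and (c): a naive reading of Definition \ref{SlaterCQ2} demands strict inequality $g_i(u) < 0$, which \eqref{SVMS} does not supply. One must carefully invoke the affine-constraint relaxation of Slater from Remark \ref{KKTRn}, which treats affine $h_j \leq 0$ without strictness, and its Lagrangian-duality counterpart in Remark \ref{KKTR1}. Once that subtlety is handled, the remaining work is routine convex-analytic bookkeeping.
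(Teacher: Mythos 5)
Your proposal is correct, and parts (a)--(c) follow essentially the paper's own route: uniqueness of the primal solution from strong convexity over a nonempty closed convex feasible set (you spell out the weak-compactness existence argument that the paper leaves implicit), and then strong duality plus dual attainment obtained by treating the affine constraints $1-y_i\la \mathbf{x}_i,w\ra\leq 0$ under the relaxed Slater condition of Remark \ref{KKTRn}, feeding the resulting KKT conditions from Theorem \ref{kttgpc} into Theorem \ref{lsd1}(a). You correctly flag the one subtlety --- that \eqref{SVMS} does not give strict inequalities and so the affine-constraint relaxation is essential --- which is exactly the point the paper's phrase ``because the functions defining the constraint set are affine'' is meant to cover. (Your primary citation for (b), the strong-duality analogue for \eqref{Q3} gestured at in Remark \ref{KKTR1}, is only sketched in the paper; but your argument for (c) via KKT and Theorem \ref{lsd1}(a) already yields $p=d$, so nothing is lost.)

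Where you genuinely diverge is part (d). The paper takes two dual solutions $\lambda,\gamma\in\R^m_+$, uses Theorem \ref{lsd1}(b) and the recovery formula \eqref{PDR} with $\Theta=\mathcal H$ to get $w_0=\sum_i\lambda_iy_i\mathbf{x}_i=\sum_i\gamma_iy_i\mathbf{x}_i$, and then applies the definition of positive linear independence directly to these two nonnegative coefficient vectors. You instead observe that positive linear independence is equivalent to ordinary linear independence (splitting $\mu_i=\mu_i^{+}-\mu_i^{-}$ and applying the definition to $\sum_i\mu_i^{+}a_i=\sum_i\mu_i^{-}a_i$), deduce that the Gram matrix $Q$ is positive definite, and conclude uniqueness from strict convexity of the dual objective. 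Both arguments are valid. The paper's is shorter and uses the hypothesis in its stated form without any conversion; yours establishes the stronger structural fact that the dual objective is strictly convex (useful, for instance, for algorithmic purposes), at the modest cost of proving the equivalence of the two independence notions. Your equivalence argument is correct as stated.
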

\begin{proof}  (a) Consider the primal SVM problem \eqref{SVMD1}. 
Since the set of feasible solutions is nonempty under \eqref{SVMS} and the objective function $f(w)=\frac{1}{2}\|w\|^2$, $w\in \mathcal H$, is strongly convex, this problem has a unique optimal solution.

(b) Under the separability assumption \eqref{SVMS},  because the functions defining the constraint set are affine, the KKT conditions are satisfied by Theorem \ref{kttgpc}. Using Theorem \ref{lsd1}(a), we see that the Lagrangian strong duality holds. 

(c) Since the KKT conditions are satisfied as stated in (b), by Theorem~\ref{lsd1}(a), the dual problem \eqref{SVMD1} also has an optimal solution. 

(d) Let $\Theta=\mathcal H$, and assume that $\{y_1\mathbf x_1, \ldots, y_m\mathbf x_m\}$ is positively linearly independent. Let $w_0$ be the unique optimal solution to the primal SVM problem in this case. Suppose that $\lambda=(\lambda_1, \ldots, \lambda_m), \gamma=(\gamma_1, \ldots, \gamma_m)\in \R^m_+$ are  solutions to the dual problem \eqref{SVMD1}. By Theorem~\ref{lsd1}(b), we have
\begin{equation*}
    w_0=\sum_{i=1}^m \lambda_i y_i\mathbf x_i=\sum_{i=1}^m \gamma_i y_i\mathbf x_i.
\end{equation*}
Using the positively linear independence assumption, we deduce that $\lambda_i=\gamma_i$ for all $i=1, \ldots, m$, which justifies the uniqueness of an optimal solution to the dual problem \eqref{SVMD1}. 
\end{proof}

\begin{Theorem}\label{SVMA1} Under the same setting as in Theorem~\ref{SVMA}, suppose that $w_0$ is the unique optimal solution to the primal problem \eqref{SVMP1}, and $\lambda=(\lambda_1, \ldots, \lambda_m)\in \R^m_+$ is an optimal solution to the dual problem \eqref{SVMD1}. Then, we have
\begin{enumerate}
\item $w_0=\mathcal P\Big(\sum_{i=1}^m \lambda_i y_i \mathbf x_i; \Theta \Big)$.
\item If $\lambda_i>0$, then $y_i\la w_0, \mathbf x_i\ra =1$.
\end{enumerate}
\end{Theorem}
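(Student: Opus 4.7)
The plan is to derive both statements directly from the ``strong duality implies KKT'' direction already proved as Theorem~\ref{lsd1}(b). Under the separability assumption \eqref{SVMS}, Theorem~\ref{SVMA}(b) guarantees that the Lagrangian strong duality $p=d$ holds, so Theorem~\ref{lsd1}(b) applies to the pair $(w_0,\lambda)$ formed by the unique primal optimal solution and the given dual optimal solution. This produces two pieces of information simultaneously: the stationarity inclusion
\begin{equation*}
0 \in \nabla_w\mathcal{L}(w_0,\lambda) + N(w_0;\Theta) = w_0 - \sum_{i=1}^m \lambda_i y_i \mathbf{x}_i + N(w_0;\Theta),
\end{equation*}
together with the complementary slackness relations $\lambda_i\bigl(1 - y_i\langle w_0,\mathbf{x}_i\rangle\bigr) = 0$ for every $i=1,\ldots,m$, where I have used the explicit gradients $\nabla f(w_0)=w_0$ and $\nabla g_i(w_0)=-y_i\mathbf{x}_i$ associated with $f(w)=\frac{1}{2}\|w\|^2$ and $g_i(w)=1-y_i\langle \mathbf{x}_i,w\rangle$.

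For part (a), I would rearrange the stationarity inclusion into the form
\begin{equation*}
\sum_{i=1}^m \lambda_i y_i \mathbf{x}_i - w_0 \in N(w_0;\Theta).
\end{equation*}
Since $w_0\in\Theta$ by primal feasibility, this is precisely the normal-cone characterization of the metric projection onto a closed convex subset of a Hilbert space, namely the property already invoked in the derivation of \eqref{PDR} via \cite[Proposition~1.69]{bmn}: for any $u\in\mathcal{H}$, a point $z\in\Theta$ equals $\mathcal{P}(u;\Theta)$ if and only if $u-z\in N(z;\Theta)$. Applying this equivalence with $u=\sum_{i=1}^m \lambda_i y_i \mathbf{x}_i$ and $z=w_0$ yields the identity in (a).

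Part (b) is then immediate from the complementary slackness equality $\lambda_i\bigl(1-y_i\langle w_0,\mathbf{x}_i\rangle\bigr)=0$: whenever $\lambda_i>0$, the second factor must vanish, giving $y_i\langle w_0,\mathbf{x}_i\rangle=1$. There is no serious obstacle in the argument; the only step that requires a moment's care is recognizing that the KKT stationarity inclusion supplied by Theorem~\ref{lsd1}(b), once written out with the explicit gradients of $f$ and $g_i$, is already in exactly the shape of the projection characterization, so (a) follows without any additional variational or minimization argument.
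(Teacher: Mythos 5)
Your proposal is correct and follows essentially the same route as the paper: invoke Theorem~\ref{lsd1}(b) (justified by the strong duality from Theorem~\ref{SVMA}(b)) to obtain the KKT stationarity inclusion and complementary slackness, then read off (a) from the normal-cone characterization of the projection underlying \eqref{PDR} and (b) from complementary slackness. Your write-up is merely more explicit than the paper's about the projection equivalence $z=\mathcal{P}(u;\Theta)\iff u-z\in N(z;\Theta)$, which is a welcome clarification rather than a deviation.
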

\begin{proof} Suppose that $w_0$ is the unique optimal solution to the primal problem \eqref{SVMP1} and $\lambda=(\lambda_1, \ldots, \lambda_m)\in \R^m_+$ is an optimal solution to the dual problem \eqref{SVMD1}. 
Applying Theorem \eqref{lsd1}(b), we see that the KKT conditions are satisfied. Then (a) follows from \eqref{PDR}, and we also have $\lambda_i (1-y_i\la w_0, \mathbf x_i\ra)=0$ for all $i=1, \ldots, m$. Conclusion (b) is now straight forward. 
\end{proof}

\begin{Remark}
{\rm Theorem \ref{SVMA1} allows us to reconstruct a solution $w_0$ to the primal problem \eqref{SVMP1} from the dual solution $\lambda$ to the dual problem \eqref{SVMD1}.
Moreover, conclusion $\mathrm{(b)}$ shows us which element $\mathbf{x}_i$ lies on the marginal hyperplane.}
\end{Remark}

\subsection{Soft-margin constrained SVM formulation via Lagrange duality}\label{svma soft margin}

In this subsection, we consider an extension of the \emph{soft-margin SVM} to the case of the constrained SVM model. The problem under consideration is
\begin{equation}\label{SVMP2}
\left\{\begin{array}{ll}
{\displaystyle\min_{ w \in \mathcal{H}, \; \xi_i\in \R_{+}}} & f(w, \xi)=\frac{1}{2}\|w\|^2+\frac{C}{m}\sum_{i=1}^m \xi_i, \vspace{1ex}\\
\mathrm{s.t.} & y_i\la w, \mathbf x_i\ra\geq 1-\xi_i,\text{ for all }i=1, \ldots, m,\vspace{1ex}\\
& \xi_i\geq 0\text{ for all }i=1,\ldots,m,\vspace{1ex}\\
&w\in \Theta,
\end{array}\right.
\end{equation}
where $C>0$ is a given constant. 

We will study this problem in relation to the following constrained minimization problem:
\begin{equation}\label{SVMP21}
\left\{\begin{array}{ll}
{\displaystyle\min_{ w \in \mathcal{H}}} & g(w)=\frac{1}{2}\|w\|^2+\frac{C}{m}\sum_{i=1}^m \max\{0, 1-y_i\la w, \mathbf x_i\ra\}, \vspace{1ex}\\
\mathrm{s.t.} &w\in \Theta.
\end{array}\right.
\end{equation}
The following proposition shows the relation between two problems.

\begin{Proposition}\label{smgp} 
Consider the minimization problems \eqref{SVMP2} and \eqref{SVMP21}. Then, we have the following conclusions:
\begin{enumerate}
    \item Problem \eqref{SVMP21} has a unique optimal solution.
    \item $w_0 \in \mathcal{H}$ is an  optimal solution to \eqref{SVMP21} if and only if there exists  $\bar \xi\in \R^m_+$ such that $(w_0, \bar \xi)$ is an optimal solution to \eqref{SVMP2}. Consequently, problem \eqref{SVMP2} also has an optimal solution.
       \end{enumerate}
  \end{Proposition}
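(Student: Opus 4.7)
For part (a), I would argue by the standard Hilbert-space strong convexity recipe. The objective
\[
g(w) = \tfrac{1}{2}\|w\|^2 + \tfrac{C}{m}\sum_{i=1}^m \max\{0,\, 1 - y_i\langle w, \mathbf{x}_i\rangle\}
\]
is the sum of a $1$-strongly convex quadratic and a finite sum of continuous convex (piecewise-affine) hinge terms, hence $g$ is continuous, $1$-strongly convex, and coercive on $\mathcal{H}$. Since $\Theta$ is nonempty, closed, and convex, coercivity together with weak lower semicontinuity of the convex continuous $g$ yields existence of a minimizer via weak compactness of sublevel sets, and strict convexity (from the strongly convex quadratic piece) gives uniqueness.

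For part (b), the backbone is the componentwise elimination of $\xi$. For any fixed $w$, the constraints $\xi_i \geq 0$ and $\xi_i \geq 1 - y_i\langle w, \mathbf{x}_i\rangle$ are minimized componentwise by $\xi^{\star}_i(w) = \max\{0,\, 1 - y_i\langle w, \mathbf{x}_i\rangle\}$, yielding the identity $f(w, \xi^{\star}(w)) = g(w)$ and the inequality $f(w, \xi) \geq g(w)$ for every feasible pair $(w, \xi)$. This immediately identifies the optimal values of \eqref{SVMP2} and \eqref{SVMP21}.

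The direction $(\Leftarrow)$ is clean: if $(w_0, 0)$ is feasible and optimal for \eqref{SVMP2}, feasibility forces $y_i\langle w_0, \mathbf{x}_i\rangle \geq 1$ for all $i$, so $\xi^{\star}(w_0) = 0$ and the chain $g(w_0) = f(w_0, 0) \leq f(w, \xi^{\star}(w)) = g(w)$ for every $w \in \Theta$ shows $w_0$ minimizes $g$, with uniqueness delivered by (a).

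The main obstacle is $(\Rightarrow)$: starting from $w_0$ optimal for \eqref{SVMP21}, the pair $(w_0, 0)$ is feasible for \eqref{SVMP2} only if every hinge loss at $w_0$ vanishes, which is not a formal consequence of optimality for the penalized problem. My plan is to discharge this by invoking the separability assumption \eqref{SVMS} carried over from the hard-margin subsection: the element $u \in \qri(\Theta)$ with $y_i\langle u, \mathbf{x}_i\rangle \geq 1$ certifies that $(u, 0)$ is feasible for \eqref{SVMP2} with value $\tfrac{1}{2}\|u\|^2$, so the infimum over \eqref{SVMP2} is finite. I would then apply the KKT characterization of Theorem~\ref{kttgpc} to \eqref{SVMP21} (treating each hinge term via Theorem~\ref{MaxRule}) and use the complementary slackness together with the \emph{a priori} zero-slack certificate from (SVMS) to conclude that the active hinge terms at $w_0$ are inactive, hence $y_i\langle w_0, \mathbf{x}_i\rangle \geq 1$ for all $i$. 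Once this zero-slack feasibility of $(w_0, 0)$ is in hand, the chain $f(w_0, 0) = g(w_0) \leq g(w) = f(w, \xi^{\star}(w)) \leq f(w, \xi)$ over feasible $(w, \xi)$ delivers optimality of $(w_0, 0)$ for \eqref{SVMP2}, and existence of an optimal solution to \eqref{SVMP2} follows as a corollary.
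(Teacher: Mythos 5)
Your part (a) and the reverse direction of (b) are correct and essentially match the paper's argument: the componentwise elimination $\xi_i^\star(w)=\max\{0,1-y_i\la w,\mathbf x_i\ra\}$ and the resulting chain $f(w,\xi)\ge f(w,\xi^\star(w))=g(w)$ are exactly the paper's mechanism. You have also put your finger on a genuine issue with the forward implication: for $(w_0,\bar\xi)$ with $\bar\xi=0$ to be an \emph{optimal} solution of \eqref{SVMP2} it must first be \emph{feasible}, i.e. $y_i\la w_0,\mathbf x_i\ra\ge 1$ for all $i$, and this is not a consequence of $w_0$ minimizing $g$. The paper's own proof runs the inequality chain $f(w_0,\bar\xi)\le g(w_0)\le g(w)\le f(w,\xi)$ without ever checking this feasibility, so it is silently incomplete at precisely the point you flagged.

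However, your proposed repair does not work. The separability condition \eqref{SVMS} (which is not even a hypothesis of this proposition) guarantees the existence of \emph{some} feasible point with zero slack; it does not force the minimizer of the penalized problem \eqref{SVMP21} to have vanishing hinge losses. Concretely, take $\mathcal H=\R$, $m=1$, $\mathbf x_1=1$, $y_1=1$, $\Theta=\R$, and $C=\tfrac1{10}$: the data are separable ($u=1$ works), yet $g(w)=\tfrac12 w^2+\tfrac1{10}\max\{0,1-w\}$ is uniquely minimized at $w_0=\tfrac1{10}$, where the hinge loss equals $\tfrac9{10}>0$, so $(w_0,0)$ is infeasible for \eqref{SVMP2}. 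No KKT or complementary-slackness argument can rescue this, because the forward implication with $\bar\xi=0$ is simply false in general. The repair consistent with your own elimination identity is to replace $\bar\xi=0$ by $\xi^\star(w_0)$: then $(w_0,\xi^\star(w_0))$ is feasible by construction, the chain $f(w_0,\xi^\star(w_0))=g(w_0)\le g(w)=f(w,\xi^\star(w))\le f(w,\xi)$ over feasible pairs gives optimality, and the converse follows the same way. The statement with $\bar\xi=0$ holds only under the additional hypothesis that the minimizer of $g$ satisfies all margin constraints, which is what the reverse direction quietly assumes via feasibility.
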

  \begin{proof} (a) Since $g$ in \eqref{SVMP21} is strongly convex and continuous, \eqref{SVMP21} has a unique optimal solution. \\[1ex]
  (b) Suppose that $w_0$ is an optimal solution to \eqref{SVMP21}. Let $\bar \xi_i=\max\{0, 1-y_i\la w_0, \mathbf x_i\ra\}$ for $i=1, \ldots, m$. Obviously, $(w_0, \bar \xi_i)$, where $\bar\xi=(\bar \xi_1, \ldots, \bar \xi_m)$, is a feasible solution to \eqref{SVMP2}. Take any $w\in \Theta$ and $\xi=(\xi_1, \ldots, \xi_m)\in\R^m_+$ such that
  \begin{equation*}
      y_i\la w, \mathbf x_i\ra\geq 1-\xi_i\; \mbox{\rm for all }i=1, \ldots, m.
  \end{equation*}
Then, we have
\begin{equation*}
    \xi_i\geq 1-y_i\la w, \mathbf x_i\ra,
\end{equation*}
which implies that $\xi_i\geq \max\{0, 1-y_i\la w, \mathbf x_i\ra\} \geq 0$ for all $i=1, \ldots, m$. 
Therefore, we can show that
\begin{equation*}
\begin{aligned}
    f(w_0, \bar \xi)&= \frac{1}{2}\|w_0\|^2+\frac{C}{m}\sum_{i=1}^m \max\{0, 1-y_i\la w_0, \mathbf x_i\ra\}\\
   & =g(w_0)\leq g(w)= \frac{1}{2}\|w\|^2+\frac{C}{m}\sum_{i=1}^m \max\{0, 1-y_i\la w, \mathbf x_i\ra\}\\
   &\leq \frac{1}{2}\|w\|^2+\frac{C}{m}\sum_{i=1}^m \xi_i=f(w, \xi).
        \end{aligned}
\end{equation*}
This shows that $(w_0, \bar \xi)$ is an optimal solution to \eqref{SVMP2}.\\[1ex]
Now, suppose $(w_0, \bar \xi)$ is optimal solution to 
\eqref{SVMP2}, and let $\widetilde\xi_i\in\R^m_+$ be given by
$\widetilde\xi_i=\max\{0,1-y_i\la w_0,\mathbf x_i\ra\}$. Since
$y_i\la w_0,\mathbf x_i\ra\geq 1-\bar\xi_i$, or
$1-y_i\langle w_0,\mathbf x_i\rangle\leq\bar\xi_i$, and $0\leq\bar\xi_i$ for 
all $i=1,\ldots,m$, we have
\begin{equation*}
\widetilde\xi_i=\max\{0,1-y_i\la w_0,\mathbf x_i\ra\}\leq\bar\xi_i\textrm{ for 
all }i=1,\ldots,m.
\end{equation*}
From this inequality, we get
$\sum_{i=1}^m\widetilde\xi_i\leq\sum_{i=1}^m\bar\xi_i$, and thus
\begin{equation}\label{g_w0}
g(w_0)=\frac{1}{2}\|w_0\|^2+\frac{C}{m}\sum_{i=1}^m\widetilde\xi_i\leq
\frac{1}{2}\|w_0\|^2+\frac{C}{m}\sum_{i=1}^m\bar\xi_i=f(w_0,\bar\xi).
\end{equation}

Now take any $w\in\Theta$ and let $\xi\in\R^m_+$ be given by
$\xi_i=\max\{0,1-y_i\langle w,\mathbf x_i\rangle\}$. With $(w,\xi)$ being 
feasible for \eqref{SVMP2} and $(w_0,\bar\xi)$ optimal, \eqref{g_w0} gives
\begin{equation*}
g(w_0)\leq f(w_0,\bar\xi)\leq f(w,\xi)=\frac{1}{2}\|w\|^2+\frac{C}{m}\sum_{i=1}^m\xi_i=g(w).
\end{equation*}
Thereofore, we conclude that $w_0$ is an optimal solution to 
\eqref{SVMP21}.
\end{proof}

Now, we consider the following maximization problem:
\begin{equation}\label{SVMD2}
\left\{\begin{array}{ll}
{\displaystyle\max_{ \lambda \in \R^m}} & \Hat{\mathcal{L}}_1(\lambda)=-\frac{1}{2}\sum_{i=1}^m \sum_{j=1}^m \lambda_i\lambda_j y_i y_j\la \mathbf x_i, \mathbf x_i\ra+\sum_{i=1}^m \lambda_i+ \frac{1}{2}\Big ( d\Big(\sum_{i=1}^m \lambda_i y_i \mathbf x_i; \Theta \Big)\Big)^2, \vspace{1ex}\\
\mathrm{s.t.} &\lambda_i \in \left[0, \frac{C}{m} \right] \ \; \mbox{\rm for all }  i=1,\ldots, m.
\end{array}\right.
\end{equation}

\begin{Theorem}\label{SVM XI BOUNDS} Consider problems \eqref{SVMP2} and 
\eqref{SVMD2}.   Then, we have the following conclusions:
\begin{enumerate}
\item Each problem has a nonempty optimal solution set. 
\item These two problems have the same optimal value.
\item If $(w_0, \bar \xi)$, where $\bar \xi\in \R^m_+$, is an optimal 
solution to \eqref{SVMP2}, and $\lambda=(\lambda_1, \ldots, \lambda_m)$ is an 
optimal solution to \eqref{SVMD2}, then
\begin{equation}\label{PRJS}
    w_0=\mathcal{P}\Big(\sum_{i=1}^m \lambda_i y_i\mathbf x_i; \Theta\Big).
\end{equation}
 Furthermore, if $0<\lambda_i<\frac{C}{m}$, then $y_i\la\bar  w, \mathbf x_i\ra=1$.
\end{enumerate}

\end{Theorem}
\begin{proof} 
(a) By Proposition~\ref{smgp},  the optimization problem \eqref{SVMP2} has an optimal solution. 
Since the constraint set in \eqref{SVMD2} is a nonempty compact set, this problem has an optimal solution due to the continuity of the objective function.

(b) Let $p$ be the optimal value of \eqref{SVMP2}, and let $d$ be the optimal value of \eqref{SVMD2}. The Lagrange function associated with \eqref{SVMP2} is
\begin{equation*}
\begin{aligned}
    \mathcal{L}(w, \xi, \lambda, \mu)&=\frac{1}{2}\|w\|^2+\sum_{i=1}^m \frac{C}{m}\xi_i+\sum_{i=1}^m \lambda_i (1-\xi_i-y_i\la w, \mathbf x_i\ra)+\sum_{i=1}^m\mu_i(-\xi_i)\\
    &=\frac{1}{2}\|w\|^2+\sum_{i=1}^m\Big(\frac{C}{m}-\lambda_i-\mu_i\Big)\xi_i -\sum_{i=1}^m \lambda_i y_i\la w, \mathbf x_i\ra,
    \end{aligned}
\end{equation*}
where $(w, \xi)\in \mathcal H\times \R^m$ and $(\lambda, \mu)\in \R^m_+\times\R^m_+$. Given any $(\lambda, \mu)\in \R^m_+\times \R^m_+$, we have
\begin{equation*}
    \Hat{\mathcal{L}}(\lambda, \mu)=\inf_{(w, \xi)\in \Theta\times \R^m} \mathcal{L}(w, \xi, \lambda, \mu).
\end{equation*}
We observe that if there exists $i=1, \ldots, m$ such that $\frac{C}{m}-\lambda_i-\mu_i\neq 0$, i.e. $\lambda_i+\mu_i\neq \frac{C}{m}$, then $\Hat{\mathcal{L}}(\lambda, \mu)=-\infty$ by letting $\xi_i\to\infty$ or $-\infty$. 

Now, consider the case where $\lambda_i+\mu_i=\frac{C}{m}$ for all $i=1, \ldots, m$. 
In this case, we use the optimality conditions:
\begin{equation*}
   0\in  \nabla_w\mathcal{L}(w, \xi, \lambda, \mu)+N(w; \Theta)=w-\sum_{i=1}^m \lambda_i y_i\mathbf x_i +N(w; \Theta) \ \;  \mbox{\rm and }\ \;  \nabla_{\xi}\mathcal{L}(w, \xi, \lambda, \mu)=0.
\end{equation*}
Following the justification before \eqref{SVMD1} gives us
\begin{equation*}
    \Hat{\mathcal{L}}(\lambda, \mu)=-\frac{1}{2}\sum_{i=1}^m \sum_{j=1}^m \lambda_i\lambda_j y_i y_j\la \mathbf x_i, \mathbf x_i\ra+\sum_{i=1}^m \lambda_i+ \frac{1}{2}\Big ( d\Big(\sum_{i=1}^m \lambda_i y_i \mathbf x_i; \Theta \Big)\Big)^2.
\end{equation*}
Obviously, the Slater condition is satisfied for problem \eqref{SVMP2} by using sufficiently large $\xi_i$ to have strict inequalities; see Remark \ref{KKTR}. Thus, by Lagrangian strong duality from Theorem~\ref{LSDE} we have
\begin{equation*}
\begin{aligned}
    &p=\sup_{(\lambda, \mu)\in \R^m_+\times \R^m_+}\Hat{\mathcal L}(\lambda, \mu)\\
    &=\sup\big\{\Hat{\mathcal{L}}(\lambda, \mu)\; \big|\; (\lambda, \mu)\in \R^m_+\times \R^m_+, \; \lambda_i+\mu_i=\tfrac{C}{m}\; \mbox{\rm for all }i=1, \ldots, m\big\}\\
    &=\sup\big\{-\frac{1}{2}\sum_{i=1}^m \sum_{j=1}^m \lambda_i\lambda_j y_i y_j\la \mathbf x_i, \mathbf x_i\ra+\sum_{i=1}^m \lambda_i+\frac{1}{2}\Big ( d\Big(\sum_{i=1}^m \lambda_i y_i \mathbf x_i; \Theta \Big)\Big)^2\; \big|\; 0\leq \lambda_i \leq \tfrac{C}{m}, \ \forall i = 1,\ldots, m \big\}\\
    &=\sup\big\{\Hat{\mathcal{L}}_1(\lambda)\; \big |\; 0\leq \lambda_i \le \tfrac{C}{m} \; \mbox{\rm for all } i = 1,\ldots, m\big\} =d.
    \end{aligned}
\end{equation*}
(c) Suppose that $(w_0, \bar \xi)$, where $\bar \xi\in \R^m_+$, is an optimal 
solution to \eqref{SVMP2}, and $\lambda=(\lambda_1, \ldots, \lambda_m)$ is an 
optimal solution to \eqref{SVMD2}. By Theorem~\ref{lsd1}, the KKT conditions 
hold. In particular,
\begin{equation*}
0\in \nabla_w\mathcal{L}(w_0, \xi, \lambda, \mu)+N(w_0; \Theta)=w_0-\sum_{i=1}^m \lambda_i y_i\mathbf x_i+N(w_0; \Theta).
\end{equation*}
Thus, we obtain \eqref{PRJS}.

Furthermore, we have the complementary slackness conditions:
\begin{equation*}
    \lambda_i(1-\xi_i-y_i\la w_0, \mathbf x_i\ra)=0\; \mbox{\rm and }\lambda_i(-\xi_i)=0\; \mbox{\rm for all }i=1, \ldots, m.
\end{equation*}
Thus, if $0<\lambda_i<\frac{C}{m}$, then $\xi_i=0$ and hence $y_i\la w_0, \mathbf x_i\ra=1$. 
\end{proof}

\subsection{Regularized hard-margin SVM formulation} via Lagrange duality
In this subsection, we consider  the following regularized hard-margin SVM model:
\begin{equation}\label{SVMP1Q}
\left\{\begin{array}{ll}
{\displaystyle\min_{ w \in \mathcal{H}}} & \frac{1}{2}\|w\|^2 + h(w), \vspace{1ex}\\
\mathrm{s.t.} & y_i(\la \mathbf x_i,w\ra)\geq 1\; \mbox{\rm for }i=1,\ldots,m,
\end{array}\right.
\end{equation}
where $h \colon  \mathcal{H} \to \oR$ is an l.s.c. proper convex function. In particular, if $h = \delta_{\Theta}$, the indicator of a nonempty closed convex set $\Theta$ in $\mathcal{H}$, then \eqref{SVMP1Q} reduces to the constrained SVM \eqref{SVMP1}. 

Recall that for a function $h\colon \mathcal H\to \oR$, the proximal mapping of $h$ is the set-valued mapping $\prox_{h}\colon \mathcal H\tto \mathcal H$ defined by
\begin{equation*}
\prox_{h}(w)=\argmin\big\{h(y)+\Ts\frac{1}{2}\|w-y\|^2\;\big|\;y\in \mathcal H\big\}.
\end{equation*}
Note that if $h\colon \mathcal H\to \oR$ is an l.s.c. proper convex function, then $\prox_{h}$ is single-valued. The Moreau envelope of $h$ is the associated optimal value function, i.e., 
\begin{equation*}
M_{h}(w)=\inf\big\{h(y)+\Ts\frac{1}{2}\|w-y\|^2\;\big|\;y\in \mathcal H\big\}, \ \; w\in \mathcal H.
\end{equation*}

The Lagrange function is given by
\begin{equation*}
\mathcal{L}(w, \lambda)=\frac{1}{2}\|w\|^2 + h(w) +\sum_{i=1}^m \lambda_i (1-y_i\la w, \mathbf x_i\ra), \; w\in \mathcal H, \; \lambda\in \R^m_+.
\end{equation*}
The Lagrange dual function is
\begin{equation*}
\begin{aligned}
    \Hat{\mathcal{L}}(\lambda)=\inf_{w\in \mathcal{H}}\mathcal{L}(w, \lambda), \; \lambda\in \R^m_+.
      \end{aligned}
\end{equation*}
Solving the inclusion
\begin{equation*}
    0\in  \nabla_w\mathcal{L}(w, \lambda)+ \partial{h}(w) =w-\sum_{i=1}^m\lambda_i y_i\mathbf x_i + \partial{h}(w),
\end{equation*}
we obtain
\begin{equation*}
    w= \mathrm{prox}_h(z), \ \;  \textrm{where} \;  z = \sum_{i=1}^m \lambda_iy_i\mathbf x_i.
\end{equation*}

Thus, the Lagrange dual function is given by
\begin{equation*}
 \Hat{\mathcal{L}}(\lambda)=-\frac{1}{2}\sum_{i=1}^m \sum_{j=1}^m \lambda_i\lambda_j y_i y_j\la \mathbf x_i, \mathbf x_j\ra+\sum_{i=1}^m \lambda_i + \frac{1}{2}\Vert \mathrm{prox}_h(z) - z \Vert^2 + h(\mathrm{prox}_h(z) ).
\end{equation*}
Let us denote
\begin{equation*}
M_h(z) = h(\mathrm{prox}_h(z)) + \frac{1}{2}\Vert \mathrm{prox}_h(z) - z \Vert^2.
\end{equation*}
Clearly, $M_h$ is the Moreau envelope of $h$, and hence it is $1$-smooth, i.e. its gradient $\nabla{M_h}(z) = z - \mathrm{prox}_h(z)$ is $1$-Lipschitz continuous.

Then, the Lagrange dual problem  of \eqref{SVMP1Q} is equivalent to the minimization problem:
\begin{equation*}
\left\{\begin{array}{ll}
{\displaystyle\min}& \mathcal{D}(\lambda) = \frac{1}{2}\sum_{i=1}^m \sum_{j=1}^m \lambda_i\lambda_j y_i y_j\la \mathbf x_i, \mathbf x_j\ra-\sum_{i=1}^m \lambda_i - M_{h}\Big(\sum_{i=1}^m \lambda_iy_i\mathbf x_i\Big), \vspace{1ex}\\
\mathrm{s.t.} & \lambda=(\lambda_1, \ldots, \lambda_m)\in \R^m_+.
\end{array}\right.
\end{equation*}

Let us consider 
\begin{equation*}
D_h(z) = \frac{1}{2}\Vert z \Vert^2 - h(\mathrm{prox}_h(z)) - \frac{1}{2}\Vert \mathrm{prox}_h(z) - z \Vert^2.
\end{equation*}
Then, it is straightforward to show that $D_h$ is convex and $\nabla{D_h}(z) = \mathrm{prox}_h(z)$, which is $1$-Lipschitz continuous.
Since the dual function $\mathcal{D}$ in \eqref{SVMD1} is $\mathcal{D}(\lambda) = D_h\big(\sum_{i=1}^m\lambda_i y_i\mathbf x_i\big) - \sum_{i=1}^m\lambda_i$, it is also convex. 
Moreover, its gradient $\nabla\mathcal{D}(\lambda) = [y_1\mathbf{x}_1\mathrm{prox}_h(z), \ldots, y_m\mathbf{x}_m\mathrm{prox}_h(z)] - \mathbf{e}$ is $L_D$-Lipschitz continuous with $L_D = \lambda_{\max}(A^TA)$, where $A$ is the matrix formed by $y_1\mathbf{x}_1, \ldots, y_m\mathbf{x}_m$, and $\mathbf{e}$ is the vector of all ones.

\subsection{Subgradient methods for solving constrained SVM problems}\label{SMCP}
Since the primal problem \eqref{SVMP21} is convex but nonsmooth, we present a subgradient method to solve it.
To make our analysis transparent, consider the  constrained problem formulated as
\begin{equation}\label{e:constraint}
\left\{\begin{array}{ll}
{\displaystyle\min_{ w \in \mathcal{H}}} & f(w) = f_0(w) + R(w), \vspace{1ex}\\
\mathrm{s.t.} &w\in \Theta,
\end{array}\right.
\end{equation}
where the function $f_0\colon \mathcal H\to\R$ is convex and possibly nonsmooth but subdifferentiable on its domain, the function $R \colon \mathcal{H}\to\R$ is $\gamma$-strongly convex and $L$-smooth (i.e., its gradient is $L$-Lipschitz continuous), and $\Theta$ is a nonempty closed convex set. 

Let $\{\alpha_k\}$  be a sequence of positive numbers. The projected subgradient algorithm is presented as follows. Given a starting point $w_1\in \Theta$, consider the {\em iterative procedure} constructed by
\begin{equation}\label{e:projected sequence}
w_{k+1}=\mathcal P(w_k-\al_k (v_k + \nabla{R}(w_k)); \Theta)\;\mbox{ where }\;v_k\in\partial f_0(w_k),\; k\in\N.
\end{equation}

We also define
\begin{equation}\label{pvk}
    V_k=\min\{f(w_1), \ldots, f(w_k)\}, \; \ k\in \N.
\end{equation}

Recall that a function $f\colon\mathcal  H \to \R$ is said to be strongly convex with parameter $\gamma>0$ if the function $\psi\colon \mathcal H\to \R$ given by
\begin{equation*}
    \psi(w)=f(w)-\frac{\gamma}{2}\|w\|^2, \ \; w\in H,
\end{equation*}
is convex.   

The following two propositions are well known. However, we provide the detailed proof for the convenience of the reader.
\begin{Proposition}\label{scs} Let $f\colon \mathcal H\to \R$ be a strongly convex function with parameter $\gamma>0$. 
Then for any $v\in \partial f(w_0)$ we have
\begin{equation}\label{sconvexsub}
    \la v, w-w_0\ra +\frac{\gamma}{2}\|w-w_0\|^2\leq f(w)-f(w_0)\; \mbox{\rm for all }w\in \mathcal H.
\end{equation}
\end{Proposition}

\begin{proof} 
Since $f$ is strongly convex with parameter $\gamma>0$, there exists a convex function $\psi \colon \mathcal H\to \R$ such that
\begin{equation*}
    f(w) = \psi(w)+\frac{\gamma}{2}\|w\|^2\ \; \mbox{\rm for all }w\in \mathcal H.
\end{equation*}
Fix any $v\in \partial f(w_0)$, by the subdifferential sum rule (see, e.g., \cite{bmn2022}), we have the representation
\begin{equation*}
    v=v_1+\gamma w_0, \; \ \mbox{\rm where }v_1\in \partial \psi(w_0).
\end{equation*}
It follows that
\begin{equation*}
\begin{aligned}
    \la v, w-w_0\ra &=\la v_1, w-w_0\ra+\gamma \la w_0, w-w_0\ra\\
    &\leq \psi(w)- \psi(w_0)+\gamma \la w_0, w-w_0\ra\\
    &=(\psi(w)+\frac{\gamma}{2}\|w\|^2)-(\psi(w_0)+\frac{\gamma}{2}\|w_0\|^2)-\frac{\gamma}{2}\|w-w_0\|^2\\
    &=f(w) - f(w_0)-\frac{\gamma}{2}\|w-w_0\|^2.
    \end{aligned}
\end{equation*}
This implies \eqref{sconvexsub}. 
\end{proof}

\begin{Lemma}\label{HNE} For any $r \geq 0$, we have the upper estimate
\begin{equation}\label{Hn}
    \sum_{i=1}^k \frac{1}{i+r}\leq \ln(k + r + 1)\ \; \mbox{\rm for all }k\in \N.
    \end{equation}
\end{Lemma}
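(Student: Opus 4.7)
The plan is a standard integral-comparison argument. The function $t \mapsto 1/(t+r)$ is positive and strictly decreasing on its domain of positivity, so for every integer $i$ in the summation range one has the pointwise estimate
\begin{equation*}
    \frac{1}{i+r} \;\le\; \int_{i-1}^{i} \frac{1}{t+r}\, dt,
\end{equation*}
obtained by bounding the integrand from below by its minimum value $1/(i+r)$ on the interval $[i-1, i]$. This is the only analytic input the argument really needs.

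Summing this inequality from $i=1$ to $k$ produces a telescoping integral, yielding
\begin{equation*}
    \sum_{i=1}^{k} \frac{1}{i+r} \;\le\; \int_{0}^{k} \frac{1}{t+r}\, dt \;=\; \ln(k+r) - \ln(r),
\end{equation*}
whenever the right-hand side is finite. The proof would then conclude with an elementary algebraic comparison of $\ln(k+r) - \ln(r)$ against $\ln(k+r+1)$, which reduces after exponentiation to checking an inequality between rational expressions in $k$ and $r$.

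The main obstacle is the degenerate endpoint $r=0$, where $\ln(r)$ is undefined and $\int_0^1 1/t\, dt$ diverges, so the telescoping step above cannot be applied directly to the interval $[0,1]$. To handle this I would split off the $i=1$ term, apply the integral bound only for $i = 2, \ldots, k$ (where the intervals $[i-1, i]$ sit strictly inside $(0, \infty)$ and the integrand is integrable), and then recombine the separated term $1/(1+r)$ before matching the result against $\ln(k+r+1)$. An equivalent termwise route, which avoids splitting at all, is to use the scalar inequality $\ln(1+x) \ge x/(1+x)$ with $x = 1/(i+r-1)$ to obtain $\frac{1}{i+r} \le \ln(i+r) - \ln(i+r-1)$ directly, and then telescope the sum; this avoids the integral but requires $i+r > 1$, so the same boundary care is needed.
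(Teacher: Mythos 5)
Your integral comparison is set up in the right direction for an upper bound (left-hand intervals $[i-1,i]$, on which the decreasing integrand dominates its value at the right endpoint), but the concluding step is where the argument breaks: the comparison $\ln(k+r)-\ln r \le \ln(k+r+1)$ exponentiates to $\frac{k+r}{r}\le k+r+1$, i.e.\ $k(1-r)\le r^2$, which holds for all $k$ only when $r\ge 1$; for $0<r<1$ it fails as soon as $k>\frac{r^2}{1-r}$. The patch you propose for $r=0$ fares no better, since splitting off the first term yields the bound $1+\ln k$, which exceeds $\ln(k+1)$ for every $k\in\N$. This is not a defect of your method but of the statement itself: the lemma is false as written. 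At $r=0$, $k=1$ the left side is $1$ while the right side is $\ln 2\approx 0.693$; more generally $\sum_{i=1}^k\frac{1}{i+r}=\ln(k+r+1)-\psi(r+1)+o(1)$ in terms of the digamma function $\psi$, so the inequality fails for all large $k$ whenever $\psi(r+1)<0$, i.e.\ roughly for $r<0.46$. Under the additional hypothesis $r\ge 1$ your argument closes immediately, since then $\ln(k+r)-\ln r\le\ln(k+r)\le\ln(k+r+1)$; and for general $r\ge 0$ the estimate your computation actually delivers is $\sum_{i=1}^k\frac{1}{i+r}\le\frac{1}{1+r}+\ln\frac{k+r}{1+r}\le 1+\ln(k+r+1)$, whose extra additive constant is harmless where the lemma is invoked in Theorem \ref{SGMC}.

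For comparison, the paper's own proof uses the opposite decomposition, asserting $\sum_{i=1}^k\frac{1}{i+r}\le\sum_{i=1}^k\int_i^{i+1}\frac{dx}{x+r}=\ln(k+r+1)-\ln(r+1)$. But with a decreasing integrand the term $\frac{1}{i+r}$ is the \emph{maximum} of $\frac{1}{x+r}$ on $[i,i+1]$, so in fact $\int_i^{i+1}\frac{dx}{x+r}\le\frac{1}{i+r}$, and the paper's computation establishes the lower bound $\sum_{i=1}^k\frac{1}{i+r}\ge\ln(k+r+1)-\ln(r+1)$ rather than the claimed upper bound. So neither your proposal nor the paper's argument proves \eqref{Hn} as stated; the statement needs either the hypothesis $r\ge 1$ or the weakened conclusion $\sum_{i=1}^k\frac{1}{i+r}\le 1+\ln(k+r+1)$, both of which your approach handles correctly.
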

\begin{proof} Since $r\geq 0$, we have
\begin{equation*}
    \int_1^{k+1} \frac{1}{(x+r)}\; dx=\ln(k+r+1) - \ln(r+1) \leq \ln(k+r+1).
\end{equation*}
By the fact that the function $f(x)=\frac{1}{x+r}$ is decreasing on $(0, \infty)$ we have   
\begin{equation*}
    \int_1^{k+1} \frac{1}{x+r}\; dx=\sum_{i=1}^{k}\int_{i}^{i+1}\frac{1}{x+r}\; dx\geq \sum_{i=1}^{k}\frac{1}{i+r}.
\end{equation*}
Combining both inequalities, we obtain \eqref{Hn}. 
\end{proof}

Recall that a function $\psi\colon \mathcal H\to \R$ is said to be $L$-smooth, where $L\geq 0$, if $\psi$ is Fr\'echet differentiable on $\mathcal H $ and its gradient is Lipschitz continuous with constant $L$, i.e.,
\begin{equation*}
    \|\nabla \psi (x)-\nabla \psi (y)\|\leq L\|x-y\|\ \; \mbox{\rm for all }x, y\in \mathcal H.
\end{equation*}

The theorem below discusses the convergence of the projected subgradient method for \eqref{e:constraint}.

\begin{Theorem} \label{SGMC}
Consider the constrained convex optimization problem \eqref{e:constraint}.
Suppose that $f$ is $\gamma$-strongly convex with $\gamma > 0$, $R$ is $L$-smooth,  and there exists $M > 0$ such that 
\begin{equation*}
    \sup\big\{ \| v \| \; \big |\; v\in\partial f_0(w), w \in \Theta \big\} \leq M.
    \end{equation*}
Let $\{x_k\}$ and $\{V_k\}$ be two sequences given by  \eqref{e:projected sequence} and \eqref{pvk}, respectively. Consider the sequence $\{\alpha_k\}$ of stepsizes given by  $\alpha_k= \frac{2}{\gamma (k+r)}$, where  $r \geq \frac{16L^2}{\gamma^2}$,  for $k\in \N$. 
Define 
\begin{equation*}
    u_k=\frac{1}{k}\sum_{i=1}^k x_i, \ \; k\in \N.
\end{equation*}
Then, we have
\begin{equation}\label{Subgrad_conv1}
    f(u_k)- f^{\star} \leq \frac{\gamma r}{4 k}d(w_1;S)^2 + \frac{\ell^2\ln(k+r+1)}{\gamma k} \ \ \mbox{\rm for all }k\in \N,
\end{equation}
where $\ell^2 = 4\|\nabla{R}(w^{*})\|^2 + 2M^2$ and $f^{\star} = \inf_{w\in \Omega}f(w)$.

Furthermore, we also have 
\begin{equation*}
  0\leq   V_k- f^{\star} \leq \frac{\gamma r}{4 k}d(w_1;S)^2 + \frac{\ell^2\ln(k+r+1)}{\gamma k} \ \ \mbox{\rm for all }k\in \N.
\end{equation*}
Consequently, the convergence rate of \eqref{e:projected sequence} using a diminishing step-size $\alpha_k$ is $\mathcal{O}(\ln(k)/k)$.
\end{Theorem}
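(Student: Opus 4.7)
The plan is to follow a standard projected subgradient analysis, adapted to separate out the smooth part $R$ and to exploit the strong convexity of $f$ with a carefully tuned step-size. First, strong convexity guarantees that the solution set $S$ consists of a single point $w^\star$, so $d(w_1;S) = \|w_1 - w^\star\|$. I would begin with the nonexpansiveness of $\mathcal{P}(\cdot;\Theta)$, which applied to the update \eqref{e:projected sequence} and expanded yields
\begin{equation*}
\|w_{k+1} - w^\star\|^2 \leq \|w_k - w^\star\|^2 - 2\alpha_k \langle v_k + \nabla R(w_k), w_k - w^\star \rangle + \alpha_k^2 \|v_k + \nabla R(w_k)\|^2.
\end{equation*}
Since $v_k + \nabla R(w_k) \in \partial f(w_k)$ by the sum rule, Proposition~\ref{scs} applied with $w = w^\star$ gives $\langle v_k + \nabla R(w_k), w_k - w^\star \rangle \geq f(w_k) - f^\star + \tfrac{\gamma}{2}\|w_k - w^\star\|^2$. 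For the squared-norm term I split $\|v_k + \nabla R(w_k)\|^2 \leq 2\|v_k\|^2 + 2\|\nabla R(w_k)\|^2$, use $\|v_k\| \leq M$, and apply $L$-smoothness as $\|\nabla R(w_k)\| \leq \|\nabla R(w^\star)\| + L\|w_k - w^\star\|$ to obtain $\|v_k + \nabla R(w_k)\|^2 \leq \ell^2 + 4L^2\|w_k - w^\star\|^2$ with $\ell^2 = 4\|\nabla R(w^\star)\|^2 + 2M^2$. Combining yields the one-step recursion
\begin{equation*}
\|w_{k+1} - w^\star\|^2 \leq \bigl(1 - \alpha_k \gamma + 4\alpha_k^2 L^2\bigr)\|w_k - w^\star\|^2 + \alpha_k^2 \ell^2 - 2\alpha_k\bigl(f(w_k) - f^\star\bigr).
\end{equation*}

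Next, I plug in $\alpha_k = 2/(\gamma(k+r))$, which gives $\alpha_k\gamma = 2/(k+r)$ and $4\alpha_k^2 L^2 = 16L^2/[\gamma^2(k+r)^2]$. The hypothesis $r \geq 16L^2/\gamma^2$ forces $16L^2/\gamma^2 \leq k+r$ for every $k \geq 1$, and therefore $4\alpha_k^2 L^2 \leq 1/(k+r)$, so the contraction factor is bounded by $(k+r-1)/(k+r)$. Multiplying the resulting inequality by $k+r$ produces the telescoping form
\begin{equation*}
\tfrac{4}{\gamma}\bigl(f(w_k) - f^\star\bigr) + (k+r)\|w_{k+1} - w^\star\|^2 \leq (k+r-1)\|w_k - w^\star\|^2 + \tfrac{4\ell^2}{\gamma^2(k+r)}.
\end{equation*}
Summing from $k = 1$ to $k$, the positional terms telescope to at most $r\|w_1 - w^\star\|^2$ (the residual $(k+r)\|w_{k+1}-w^\star\|^2$ being nonnegative is dropped), while $\sum_{i=1}^{k} 1/(i+r) \leq \ln(k+r+1)$ from Lemma~\ref{HNE}. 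Dividing by $4k/\gamma$ and using Jensen's inequality via convexity of $f$ (so that $f(u_k) \leq k^{-1}\sum_{i=1}^k f(w_i)$) delivers \eqref{Subgrad_conv1}.

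The bound on $V_k$ is then immediate: since $V_k = \min_{1 \leq i \leq k} f(w_i) \leq k^{-1}\sum_{i=1}^k f(w_i)$, the same upper estimate applies, and $V_k \geq f^\star$ follows from $f^\star$ being the infimum. The delicate point, and what I expect to be the main obstacle, is the interaction between the smoothness constant $L$ and the strong-convexity constant $\gamma$: the coefficient $1 - \alpha_k\gamma + 4\alpha_k^2 L^2$ must be bounded by $(k+r-1)/(k+r)$, not merely by $1 - \alpha_k\gamma/2$, because only the former produces a clean telescoping sum after multiplication by $k+r$. The threshold $r \geq 16L^2/\gamma^2$ is exactly what makes the quadratic-in-$\alpha_k$ perturbation absorb into the linear-in-$\alpha_k$ contraction for every $k \geq 1$, giving the $\mathcal{O}(\ln k/k)$ rate rather than a cruder estimate.
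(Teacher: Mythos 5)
Your proposal is correct and follows essentially the same route as the paper's proof: the same one-step expansion, the same use of Proposition~\ref{scs} applied to $v_k+\nabla R(w_k)\in\partial f(w_k)$, the same splitting $\|v_k+\nabla R(w_k)\|^2\le \ell^2+4L^2\|w_k-w^\star\|^2$, the same absorption of the $4L^2\alpha_k^2$ term into the contraction via $r\ge 16L^2/\gamma^2$, and the same telescoping after multiplying by a multiple of $(k+r)$, followed by Lemma~\ref{HNE} and Jensen. The only cosmetic differences are that you invoke uniqueness of the minimizer to identify $d(w_1;S)$ directly (the paper instead takes the infimum over $w^*\in S$ at the end) and you handle the projection via nonexpansiveness, whereas the paper restricts to $\Theta=\mathcal H$ and leaves the projected case to the reader.
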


\begin{proof} 
For simplicity, we only consider the case where $\Theta=\mathcal  H$ and leave the general case for the reader. Observe that the set $S$ of all optimal solutions to \eqref{e:constraint} is nonempty because $f$ is continuous and strongly convex with parameter $\gamma>0$. 
Fix any $w^* \in S$, let $k\in \N$,  and let $d_i = v_i + \nabla{R}(w_i)$. Using the estimate in Proposition~\ref{scs}, we have
\begin{equation*}
\begin{aligned}
    \|w_{i+1}-w^*\|^2& =\|w_i-\alpha_i d_i-w^*\|^2\\  
    &=\|w_i-w^*\|^2-2\alpha_i\la d_i, w_i-w^*\ra+\alpha_i^2\|d_i\|^2\\   
& \leq \|w_i - w^*\|^2-2\alpha_i(f(w_i)-f(w^*)+\frac{\gamma}{2}\|w_i-w^*\|^2)+\alpha_i^2\Vert d_i\Vert^2\\
    &\leq \|w_i-w^*\|^2-2\alpha_i(f(w_i)- f^{\star} +\frac{\gamma}{2}\|w_i-w^*\|^2)+ 2\alpha_i^2\Vert v_i\Vert^2 + 2\alpha_i^2\Vert\nabla{R}(w_i)\Vert^2.
    \end{aligned}
\end{equation*}
By the $L$-smoothness of $R$, we have
\begin{equation*}
\Vert \nabla{R}(w_i)\Vert^2 \leq 2\Vert \nabla{R}(w^*)\Vert^2 + 2\Vert \nabla{R}(w_i) - \nabla{R}(w^*)\Vert^2 \leq 2\Vert \nabla{R}(w^*)\Vert^2 + 2L^2\Vert w_i - w^*\Vert^2.
\end{equation*}
Substituting this inequality into the first one, and using $\|v_i\| \leq M$, we get
\begin{equation*}
\begin{aligned}
    \|w_{i+1}-w^*\|^2 &\leq \|w_i-w^*\|^2-2\alpha_i(f(w_i)- f^{\star}) -\gamma \alpha_i \|w_i-w^*\|^2 + 2\alpha_i^2M^2\\
    &+ 4\alpha_i^2\| \nabla{R}(w^*)\|^2 + 4L^2\alpha_i\| w_i-w^0\|^2 \vspace{1ex}\\
    & = -2\alpha_i(f(w_i)- f^{\star})+ \big(1 - \gamma\alpha_i + 4L^2\alpha_i^2\big)\Vert w_i - w^*\Vert^2 + 2\alpha_i^2\big(2\Vert\nabla{R}(w^*)\Vert^2 + M^2).
    \end{aligned}
\end{equation*}
By the choice of $\alpha_i$, we have $\alpha_i \leq \frac{\gamma}{8L^2}$. Thus, we have the estimate
\begin{equation*}
\begin{aligned}
    \|w_{i+1}-w^*\|^2  &\leq -2\alpha_i(f(w_i)-f^*)+  \big(1 - \frac{\gamma}{2}\alpha_i\big)\Vert w_i - w^*\Vert^2 + \alpha_i^2 \ell^2,
    \end{aligned}
\end{equation*}
where $\ell= \sqrt{ 2\big(2\Vert\nabla{R}(w^*)\Vert^2 + M^2)}$. Since $\alpha_i = \frac{2}{\gamma(i+r)}$, the last inequality leads to
\begin{equation*}
\begin{aligned}
    \frac{4}{\gamma(i+r)}(f(w_i)-f^{\star})  &\leq \Big(1 - \frac{1}{i+r} \Big)\| w_i - w^*\|^2 - \|w_{i+1}-w^*\|^2 + \frac{4}{\gamma^2(i+r)^2} \ell^2.\\
    \end{aligned}
\end{equation*}
Multiplying both sides by $\frac{\gamma(i+r)}{2}$, we obtain
\begin{equation*}
\begin{aligned}
    f(w_i)-f^{\star} &\leq \frac{\gamma(i+r-1)}{4} \| w_i - w^*\|^2 - \frac{\gamma(i+r)}{4} \|w_{i+1}-w^*\|^2 + \frac{ \ell^2}{\gamma(i+r)} .
    \end{aligned}
\end{equation*}
Since $\alpha_i=\frac{2}{\gamma(i+r)} \leq \frac{\gamma}{8L^2}$, we get $i+r \geq \frac{16L^2}{\gamma^2}$. Summing up the above inequalities for  $i=1, \ldots, k$ and using Lemma \ref{HNE}, we get
\begin{equation*}
\begin{aligned}
\sum_{i=1}^k\big( f(w_i) - f^{\star} \big)&\leq \frac{\gamma r}{4} \| w_1 - w^*\|^2 - \frac{\gamma(k+r)}{4} \|w_k - w^*\|^2 + \frac{\ell^2}{\gamma} \sum_{i=1}^k \frac{1}{(i+r)} \\
&\leq \frac{\gamma r}{4} \| w_1 - w^*\|^2+\frac{\ell^2}{\gamma^2} \sum_{i=1}^k \frac{1}{(i+r)}  \leq  \frac{\gamma r}{4} \| w_1 - w^*\|^2+\frac{\ell^2\ln(k+r+1)}{\gamma^2}.
\end{aligned}
\end{equation*}
This inequality leads to
\begin{equation*}
    \frac{1}{k}\sum_{i=1}^k f(w_i)- f^{\star} \leq \frac{\gamma r}{4k} \| w_1 - w^*\|^2+\frac{\ell^2}{\gamma k}\ln(k+r+1).
\end{equation*}
Since $x^*\in S$ is arbitrary, we se that
\begin{equation}\label{SCE1}
\frac{1}{k}\sum_{i=1}^k f(w_i)- f^{\star} \leq \frac{\gamma r}{4k} d(w_1; S)^2+\frac{\ell^2}{\gamma k}\ln(k+r+1).
\end{equation}
Finally, by the convexity of $f$, we have $f(u_k) \leq \frac{1}{k}\sum_{i=1}f(w_i)$.
Substituting this expression into \eqref{SCE1}, we obtain \eqref{Subgrad_conv1}.
We can also deduce from \eqref{SCE1} that
\begin{equation*}
    V_k- f^{\star} \leq \frac{1}{k}\sum_{i=1}^k f(w_i)- f^{\star} \leq \frac{\gamma r}{4k} d(w_1; S)^2+\frac{\ell^2}{\gamma k}\ln(k+r+1).
\end{equation*}
This completes the proof of the theorem.  
\end{proof}

Note that the condition $\sup\{ \Vert g \Vert : g\in\partial{f_0(w)}, w \in \mathcal H \} \leq M$ in Theorem \ref{SGMC} is equivalent to the $M$-Lipschitz continuity of $f_0$ on $\mathcal H$.
However, this assumption can be relaxed by a weaker condition: the boundedness of subgradients on a compact set.
Now, we establish a sufficient condition which guarantees this bounded property of the subgradients.

\begin{Lemma} \label{bdv} Let $f_0 \colon \mathcal H\to \R$ be a continuous convex function. Suppose that $K$ is a nonempty compact set in $\mathcal H$. Then the set
\begin{equation*}
    A=\bigcup_{w\in K}\partial f_0(w)
\end{equation*}
is bounded. 
\end{Lemma}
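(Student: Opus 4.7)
The plan is to reduce the boundedness of $A$ to a uniform Lipschitz bound for $f_0$ on a neighborhood of $K$, and then invoke the subgradient inequality to bound subgradient norms by the Lipschitz constant. First, I would recall the classical fact (available in bmn2022) that a convex function $f_0\colon\mathcal H\to\R$ that is continuous on $\mathcal H$ is automatically locally Lipschitz: for each $w\in\mathcal H$ there exist $r_w>0$ and $L_w\geq 0$ such that
\begin{equation*}
    |f_0(u)-f_0(v)|\leq L_w\|u-v\|\ \; \mbox{\rm for all }u,v\in B(w;r_w).
\end{equation*}

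Next, the collection $\{B(w;r_w/2)\;|\;w\in K\}$ is an open cover of the compact set $K$, so I would extract a finite subcover $B(w_i;r_{w_i}/2)$, $i=1,\ldots,N$. Setting $\delta:=\min_i r_{w_i}/2>0$ and $M:=\max_i L_{w_i}$, any $w\in K$ lies in some $B(w_i;r_{w_i}/2)$, and therefore $B(w;\delta)\subset B(w_i;r_{w_i})$, on which $f_0$ is $M$-Lipschitz.

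Finally, for any $w\in K$ and any nonzero $v\in\partial f_0(w)$, the subgradient inequality applied at the test point $u:=w+\delta v/\|v\|\in B(w;\delta)$ yields
\begin{equation*}
    \delta\|v\|=\langle v,u-w\rangle\leq f_0(u)-f_0(w)\leq M\|u-w\|=M\delta,
\end{equation*}
hence $\|v\|\leq M$. Since the bound $M$ depends on neither $w\in K$ nor $v\in\partial f_0(w)$, the set $A$ is bounded by $M$.

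The main (and mild) obstacle is the passage from pointwise continuity of $f_0$ to the local Lipschitz property. This rests on the classical result that a convex function bounded above on a neighborhood of a point is automatically locally Lipschitz there, which is available in bmn2022 for convex functions on Hilbert spaces. A contradiction-style alternative — assume $w_k\in K$ and $v_k\in\partial f_0(w_k)$ with $\|v_k\|\to\infty$, pass to a subsequence $w_k\to\bar w\in K$, and test the subgradient inequality at $w_k+\varepsilon v_k/\|v_k\|$ for a fixed small $\varepsilon>0$ — ends up relying on the same local boundedness ingredient, so I would prefer the direct Lipschitz route above.
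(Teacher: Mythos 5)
Your proof is correct and follows essentially the same route as the paper's: local Lipschitz continuity of the continuous convex function $f_0$, a finite subcover of the compact set $K$ to obtain a uniform Lipschitz constant, and the subgradient inequality tested along the direction $v/\|v\|$ to bound $\|v\|$ by that constant. The half-radius refinement you use to guarantee a uniform ball $B(w;\delta)$ around each $w\in K$ is a slightly more explicit version of the same argument and is a fine touch.
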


\begin{proof} Since $f_0$ is convex and continuous, it is locally Lipschitz continuous on $\mathcal H$. Take any $c\in K$ and find $\delta_c>0$ and $\ell_c>0$ such that $f_0$ is Lipschitz continuous on $B(c; \delta_c)$ with Lipschitz constant $\ell_c$. Since $K$ is compact, there exist $c_1, \ldots, c_m\in K$ such that
\begin{equation*}
    K\subset \bigcup_{i=1}^m B(c_i; \delta_{c_i}).
\end{equation*}
Let $\ell=\max\{\ell_{c_i}\; |\; i=1, \ldots, m\}$. 
Now, take any $v\in A$. Then there exist $i\in \{1, \ldots, m\}$ and $\hat{w}\in B(c_i; \delta_{c_i})$ such that $v\in \partial f_0(\hat{w})$. Then, for any $w\in B(c_i; \delta_{c_i})$ we get
\begin{equation*}\label{vk_est}
    \la v, w-\hat w\ra\leq f_0(w) - f_0(\hat w)\leq \ell_{c_i}\|w-\hat w \|\leq \ell \|w-\hat w\|.
\end{equation*}
This implies that $\|v\| \leq\ell$, which completes the proof.
\end{proof}

Following a similar proof as Theorem \ref{SGMC}, but now we treat $f = f_0$ (i.e,. without $R$) as a nonsmooth function, we obtain the following theorem, which generalizes the result from \cite{Pegasos} to Hilbert spaces.

\begin{Theorem} \label{SGMC2}
Consider the constrained convex optimization problem \eqref{e:constraint} in which $f$ is strongly convex with the strong convexity parameter $\gamma>0$ and $R=0$.  
Let $\{w_k\}$ and $\{V_k\}$ be two sequences given by  \eqref{e:projected sequence} and \eqref{pvk}, respectively, and $\alpha_k= \frac{1}{\gamma k}$ for $k\in \N$. Assume that there exists a compact set $K$ such that $w_k\in K$ for all $k\in \N$.  Define 
\begin{equation*}
    u_k=\frac{1}{k}\sum_{i=1}^k x_i, \ \; k\in \N.
\end{equation*}
Then, we have
\begin{equation*}
    f(u_k)- f^{\star} \leq \frac{\ell^2(1+\ln(k))}{2\gamma k}\ \; \mbox{\rm for all }k\in \N.
\end{equation*}
Furthermore, we also have 
\begin{equation*}
  0\leq   V_k- f^{\star} \leq \frac{\ell^2(1+\ln(k))}{2\gamma k}\ \; \mbox{\rm for all }k\in \N,
\end{equation*}
where $f^{\star} = \inf_{w\in \Omega}f(w)$. 
\end{Theorem}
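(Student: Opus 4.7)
The plan is to mimic the proof of Theorem \ref{SGMC}, with simplifications arising from $R=0$ and the replacement of the global bound on $\partial f_0$ by Lemma \ref{bdv} applied to the compact set $K$. First I would fix an optimal solution $w^*$ (which exists by strong convexity and continuity of $f$), invoke Lemma \ref{bdv} to obtain a constant $\ell > 0$ such that $\|v\| \leq \ell$ for every $v \in \partial f_0(w)$ and every $w \in K$, and in particular $\|v_k\| \leq \ell$ for all $k \in \N$ since the iterates remain in $K$ by hypothesis.

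Next, using the nonexpansiveness of the projection onto $\Theta$, the strong convexity inequality from Proposition \ref{scs}, and the bound $\|v_i\| \leq \ell$, I would derive the one-step recursion
\begin{equation*}
\|w_{i+1} - w^*\|^2 \leq (1 - \gamma\alpha_i)\|w_i - w^*\|^2 - 2\alpha_i (f(w_i) - f^{\star}) + \alpha_i^2 \ell^2.
\end{equation*}
Plugging in $\alpha_i = \frac{1}{\gamma i}$ gives $1 - \gamma\alpha_i = \frac{i-1}{i}$, so multiplying through by $i$ yields
\begin{equation*}
i\|w_{i+1} - w^*\|^2 \leq (i-1)\|w_i - w^*\|^2 - \frac{2}{\gamma}(f(w_i) - f^{\star}) + \frac{\ell^2}{\gamma^2 i}.
\end{equation*}

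The key step is then to telescope by summing this inequality over $i=1,\ldots,k$. The term $(i-1)\|w_i - w^*\|^2$ cancels with $i\|w_{i+1} - w^*\|^2$ from the previous index, so the boundary contributions reduce to $0$ at $i=1$ (since the coefficient vanishes) and to $-k\|w_{k+1}-w^*\|^2 \leq 0$ at $i=k$. This gives
\begin{equation*}
\frac{2}{\gamma}\sum_{i=1}^k (f(w_i) - f^{\star}) \leq \frac{\ell^2}{\gamma^2}\sum_{i=1}^k \frac{1}{i} \leq \frac{\ell^2}{\gamma^2}(1 + \ln(k)),
\end{equation*}
where the final bound follows from Lemma \ref{HNE} with $r=0$ (or the standard harmonic estimate $\sum_{i=1}^k \frac{1}{i} \leq 1 + \ln k$). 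Dividing by $k$ and then using the convexity bound $f(u_k) \leq \frac{1}{k}\sum_{i=1}^k f(w_i)$ delivers the estimate for $f(u_k) - f^{\star}$, while the estimate for $V_k - f^{\star}$ follows since $V_k = \min_{1\leq i \leq k} f(w_i) \leq \frac{1}{k}\sum_{i=1}^k f(w_i)$, and $V_k - f^{\star} \geq 0$ is immediate.

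I do not expect any serious obstacle; the only subtlety is recognizing that the compactness assumption on $\{w_k\}$ (not the full domain $\mathcal H$) together with Lemma \ref{bdv} replaces the global Lipschitz-type assumption used in Theorem \ref{SGMC}, and checking that the telescoping with step-size $\alpha_i = 1/(\gamma i)$ indeed produces the exact coefficient $\frac{i-1}{i}$ that makes the boundary terms collapse. The absence of the regularizer $R$ means no extra $4L^2\alpha_i^2$ term appears, so the step-size condition $r \geq 16 L^2/\gamma^2$ from the previous theorem is not needed here.
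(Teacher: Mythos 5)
Your proposal is correct and follows essentially the same route the paper intends: the one-step recursion from Proposition \ref{scs} and nonexpansiveness of the projection, the exact telescoping produced by $\alpha_i=\frac{1}{\gamma i}$, the harmonic-sum bound, and Jensen's inequality, with Lemma \ref{bdv} supplying the subgradient bound $\ell$ on the compact set $K$ in place of the global bound $M$ used in Theorem \ref{SGMC}. The paper itself only sketches this by reference to the proof of Theorem \ref{SGMC} with $R=0$, and your write-up fills in precisely the intended details.
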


Now, we use Proposition \ref{bdv} for the constrained SVM problem. This proposition is applicable in a more general setting, while significantly simplifying the proof of \cite[Theorem~1]{Pegasos}.

\begin{Proposition}\label{SVMbd} Consider the optimization problem \eqref{SVMP21} and define the set
\begin{equation*}
    D=\{y_i\mathbf x_i\; |\; 1\leq i\leq m\}\cup\{0\}.
\end{equation*}
Consider the subgradient method \eqref{e:projected sequence} applied to this problem with $w_1\in K=C\mbox{\rm co}\, D$  and $0\leq \alpha_k\leq 1$ for $k\in \N$. Suppose that one of the following conditions are satisfied:
\begin{enumerate}
    \item $\Theta=\mathcal H$.
    \item $\Theta$ is a compact set.
    \end{enumerate}
Then there is a compact set $K$ such that $w_k\in K$  for all $k\in \N$.
\end{Proposition}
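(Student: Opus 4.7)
The plan is to analyze separately the two cases and in both situations exhibit an explicit compact set containing the entire iterate sequence.

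First, I would work out the structure of an arbitrary subgradient $v_k\in\partial f_0(w_k)$, where $f_0(w)=\frac{C}{m}\sum_{i=1}^m\max\{0,\,1-y_i\langle w,\mathbf x_i\rangle\}$. By the subdifferential of the Hinge loss together with the sum rule, every such $v_k$ has the form $v_k=-\frac{C}{m}\sum_{i=1}^m\mu_i^k\, y_i\mathbf x_i$ for some coefficients $\mu_i^k\in[0,1]$. Setting $\mu_0^k=1-\sum_{i=1}^m\mu_i^k/m\geq 0$, the point $-v_k/C$ is a convex combination of $0$ and $y_1\mathbf x_1,\ldots,y_m\mathbf x_m$, i.e.\ $-v_k\in C\,\mbox{\rm co}\,D=K$, \emph{regardless of the current iterate} $w_k$.

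Next I would handle case (a) where $\Theta=\mathcal H$, so the projection is the identity and the recursion simplifies, using $\nabla R(w_k)=w_k$, to
\begin{equation*}
w_{k+1}=(1-\alpha_k)w_k+\alpha_k(-v_k).
\end{equation*}
Since $\alpha_k\in[0,1]$ and $K=C\,\mbox{\rm co}\,D$ is convex, a straightforward induction shows that $w_k\in K$ for every $k\in\mathbb N$: the base case is the hypothesis $w_1\in K$, and the inductive step follows from $w_k\in K$, $-v_k\in K$, and convexity of $K$. It remains to observe that $K$ is compact in $\mathcal H$, which is the main conceptual point: $D$ consists of finitely many vectors (the $m$ training points together with $0$), so $\mbox{\rm co}\,D$ is the image of the standard simplex in $\R^{m+1}$ under a continuous linear map, hence a compact polytope; scaling by $C$ preserves compactness.

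Finally, case (b) where $\Theta$ is compact is essentially automatic: for every $k\geq 1$ the defining formula \eqref{e:projected sequence} gives $w_{k+1}=\mathcal P(\,\cdot\,;\Theta)\in\Theta$, so the set $K':=\{w_1\}\cup\Theta$ is compact in $\mathcal H$ and contains the whole sequence $\{w_k\}_{k\in\mathbb N}$. I do not expect a real obstacle in this argument: the only nontrivial step is the invariance in case (a), and that reduces to the two ingredients already isolated, namely that every admissible subgradient lives in $K$ and that the update is a convex combination with coefficients in $[0,1]$.
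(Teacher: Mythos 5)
Your proposal is correct and follows essentially the same route as the paper: in case (a) both arguments show that the hinge-loss subgradient satisfies $-v_k\in C\,\mbox{\rm co}\,D=K$, rewrite the update as the convex combination $w_{k+1}=(1-\alpha_k)w_k+\alpha_k(-v_k)$, and conclude by induction using convexity and compactness of the polytope $K$. Your treatment of case (b) (iterates after projection lie in the compact set $\Theta$, so $\{w_1\}\cup\Theta$ works) is actually a cleaner justification than the paper's, which simply defers to its subgradient-boundedness lemma.
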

\begin{proof} It suffices to prove the theorem under (a) because the conclusion under (b) is obvious by Proposition \ref{bdv}. Observe that
\begin{equation*}
v_k\in w_k+\frac{C}{m}\sum_{i=1}^m \co\{0, -y_i\mathbf x_i\}=w_k-\frac{C}{m}\sum_{i=1}^m \co\{0, y_i\mathbf x_i\}\subset w_k-C\,\co\, D=w_k-K.
\end{equation*}
Then we have
\begin{equation*}
    w_{k+1}=w_k-\alpha_k v_k\in (1-\alpha_k)w_k +\alpha_k K.
\end{equation*}
Since $K$ is a convex set, by induction we can easily show that $w_k\in K$ for all $k\in \N$. Thus, the conclusion from Lemma~\ref{bdv} follows because the set $K$ is obviously compact.
\end{proof}

Finally,  we obtain the corollary below using the results of Theorems \ref{SGMC} and \ref{SGMC2}.

\begin{Corollary} Consider the constrained SVM \eqref{SVMP21}. Let $\{x_k\}$ and $\{V_k\}$ be two sequences given by  \eqref{e:projected sequence} and \eqref{pvk}, respectively.
\begin{enumerate}
\item Consider the sequence $\{\alpha_k\}$ of stepsizes given by  $\alpha_k= \frac{2}{\gamma (k+r)}$, where  $r \geq \frac{16L^2}{\gamma^2}$,  for $k\in \N$. Then we obtain the estimates in Theorem \ref{SGMC}.
\item Suppose that $\Theta=\mathcal H$ or $\Theta$ is compact. Consider the sequence $\{\alpha_k\}$ of stepsizes given by  $\alpha_k= \frac{1}{\gamma k}$ and the choice of $w_1$ as in Proposition \ref{SVMbd}. Then we obtain the estimates in Theorem \ref{SGMC2}.
\end{enumerate}

\end{Corollary}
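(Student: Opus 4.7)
The plan is to recognize \eqref{SVMP21} as an instance of the abstract problem \eqref{e:constraint} under two different setups of its objective, so that Theorems~\ref{SGMC} and~\ref{SGMC2} apply verbatim for parts~(a) and~(b), respectively.

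For part~(a), I would split $g = f_0 + R$, where $f_0(w) = \frac{C}{m}\sum_{i=1}^m \max\{0, 1-y_i\la w, \mathbf x_i\ra\}$ is the hinge-loss part and $R(w) = \frac{1}{2}\|w\|^2$. Then $R$ is $1$-smooth and $1$-strongly convex, so $g = f_0 + R$ is strongly convex with $\gamma = 1$ and the smoothness constant of $R$ is $L = 1$. For the bounded-subgradient hypothesis of Theorem~\ref{SGMC}, I would invoke the subdifferential formula for the maximum (each summand is the maximum of the zero function and an affine function, so its subgradient at any point lies in $\co\{0, -y_i\mathbf x_i\}$) to conclude
\begin{equation*}
\sup\big\{\|v\|\; \big|\; v\in \partial f_0(w), w\in \mathcal H\big\}\leq \frac{C}{m}\sum_{i=1}^m \|\mathbf x_i\|=: M.
\end{equation*}
With $\gamma = L = 1$, the lower bound $r\geq 16L^2/\gamma^2 = 16$ in Theorem~\ref{SGMC} matches the corollary's hypothesis, and its conclusion is precisely the claim in part~(a).

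For part~(b), I would instead treat $g$ as a single nonsmooth strongly convex function, identifying $f = g$ and $R = 0$ in the notation of Theorem~\ref{SGMC2}; strong convexity with $\gamma = 1$ is inherited from the $\frac{1}{2}\|\cdot\|^2$ term, and the step size $\alpha_k = 1/(\gamma k) = 1/k$ lies in $(0,1]$. Proposition~\ref{SVMbd} then applies (under either hypothesis $\Theta = \mathcal H$ or $\Theta$ compact, together with $w_1 \in C\,\co\, D$ and $\alpha_k \in [0,1]$) and produces a compact set $K$ containing every iterate $w_k$. Lemma~\ref{bdv} in turn gives $\sup\{\|v\| : v\in \partial g(w), w\in K\}<\infty$, which is the boundedness requirement in Theorem~\ref{SGMC2}, and applying that theorem yields the estimates in part~(b).

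The main obstacle is essentially bookkeeping rather than analysis: one must verify that the subgradient of the hinge loss is globally bounded (so that part~(a) does not require compactness of $\Theta$), and that the step size $1/k$ in part~(b) is compatible with the $\alpha_k \in [0,1]$ restriction of Proposition~\ref{SVMbd}. Both are short checks, and no analytic content beyond Theorems~\ref{SGMC} and~\ref{SGMC2} is needed.
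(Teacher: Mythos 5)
Your proposal is correct and is exactly the argument the paper intends: the paper states this corollary without proof as a direct application of Theorems~\ref{SGMC} and~\ref{SGMC2}, and your decomposition $g=f_0+R$ with $R(w)=\tfrac12\|w\|^2$ (so $\gamma=L=1$), the global bound $M=\tfrac{C}{m}\sum_{i=1}^m\|\mathbf x_i\|$ on the hinge-loss subgradients for part~(a), and the use of Proposition~\ref{SVMbd} together with Lemma~\ref{bdv} to supply the compactness/boundedness hypothesis for part~(b) are precisely the intended verifications.
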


\begin{Remark}{\rm It is straightforward to develop stochastic subgradient methods for the constrained SVM \eqref{SVMP21} and analyze their convergence based on \cite{Pegasos}. }
\end{Remark}

\subsection{Solving constrained SVM problems via dual gradient descent method}

We now turn to an implementation where we apply Lagrangian duality to solve 
the constrained SVM problem. We will provide examples of solving both the 
hard-margin \eqref{SVMP1} and the soft-margin \eqref{SVMP2} constrained SVM 
problems, where we have $m$ data points
$\mathbf x_1,\ldots,\mathbf x_m\in\mathcal H$ and $y_i\in \{-1, 1\}$ for
$i=1,\ldots, m$. Recall the Lagrange dual formulation of the hard-margin 
constrained SVM problem \eqref{SVMD12}
\begin{equation}\label{SVMD1 implement}
\left\{\begin{array}{ll}
{\displaystyle\min}&\ph(\lambda)= \frac{1}{2}\|\mathcal A\lambda\|^2-\la\mathbf{e},\lambda\ra - \frac{1}{2} \big ( d(\mathcal A\lambda; \Theta)\big)^2, \vspace{1ex}\\
\mathrm{s.t.} & \lambda=(\lambda_1, \ldots, \lambda_m)\in \R^m_+
\end{array}\right.
\end{equation}
and the soft-margin constrained SVM problem in the form \eqref{SVMD2} per 
Theorem~\ref{SVM XI BOUNDS}
\begin{equation}\label{SVMD2 implement}
\left\{\begin{array}{ll}
{\displaystyle\min}&\ph(\lambda)= \frac{1}{2}\|\mathcal A\lambda\|^2-\la\mathbf{e},\lambda\ra - \frac{1}{2} \big ( d(\mathcal A\lambda; \Theta)\big)^2, \vspace{1ex}\\
\mathrm{s.t.} & \lambda_i\in[0,\frac{C}{m}]\textrm{ for all }i=1,\ldots,m,
\end{array}\right.
\end{equation}
where $\mathcal A:\R^m\to\mathcal H$ is the linear mapping
$\mathcal A\lambda=\sum_{i=1}^m\lambda_i y_i\mathbf{x}_i$, $\mathbf{e}$ is a 
vector of all ones in $\R^m$, $\Theta$ is a nonempty closed convex subset 
of $\mathcal H$, and $C>0$ is a given constant.

For implementation examples, we use synthetic data in $\R^n$ and define a 
matrix $A\in\R^{n\times m}$ whose $j$th column is $y_j\mathbf{x}_j$. With $A$, 
$\mathbf{e}$, and $\Theta$ as specified,~\eqref{SVMD1 implement} becomes
\begin{equation}\label{SVMD1 matrix}
\left\{\begin{array}{ll}
{\displaystyle\min}&\ph(\lambda)= \frac{1}{2}\|A\lambda\|^2-\la\mathbf{e},\lambda\ra - \frac{1}{2} \big( d(A\lambda;\Theta)\big)^2, \vspace{1ex}\\
\mathrm{s.t.} & \lambda=(\lambda_1, \ldots, \lambda_m)\in \R^m_+
\end{array}\right.
\end{equation}
while \eqref{SVMD2 implement} becomes
\begin{equation}\label{SVMD2 matrix}
\left\{\begin{array}{ll}
{\displaystyle\min}&\ph(\lambda)= \frac{1}{2}\|A\lambda\|^2-\la\mathbf{e},\lambda\ra - \frac{1}{2} \big( d(A\lambda;\Theta)\big)^2, \vspace{1ex}\\
\mathrm{s.t.} & \lambda_i=\in[0,\frac{C}{m}]\text{ for all }i=1,\ldots,m.
\end{array}\right.
\end{equation}

One can solve~\eqref{SVMD1 matrix} and \eqref{SVMD2 matrix} using an iterative 
method, e.g., the projected gradient method in~\eqref{e:projected sequence} or 
Nesterov's accelerated gradient method. In our implementation, we use 
constrained convex optimization solvers from the Python libraries CVXOPT 
\cite{cvxopt} and SciPy \cite{scipy}. As shown in Remark~\ref{Dual_grad} (b), 
we have
\begin{equation}\label{dual gradient}
\nabla \ph(\lambda)= A^T\mathcal P(A\lambda;\Theta) - \mathbf{e},
\end{equation}
where $\mathcal P(A\lambda;\Theta)$ is the Euclidean projection of $A\lambda$ 
onto $\Theta$,
and this gradient is $L$-Lipschitz continuous with Lipschitz constant
$L = \lambda_{\max}(A^TA)$, the largest eigenvalue of $A^TA$.

Suppose that we are applying either a standard gradient or accelerated 
gradient method to solve \eqref{SVMD1 matrix}, then the algorithm generates an 
iterative sequence $\{\lambda_k\}\subset\R^m_+$ to approximate a solution 
$\lambda^{*}$ of \eqref{SVMD1 matrix}. Given $\lambda_k$ at the end of the 
algorithm, we recover \begin{equation*}
w_k = \mathcal{P}\big( A\lambda_k;\Theta \big)
\end{equation*}
as an approximate solution to the primal SVM problem \eqref{SVMP1}.
If we can guarantee $\|\lambda_k - \lambda^{*}\| \leq \epsilon$ for a given tolerance $\epsilon > 0$, then we have
\begin{equation*}
\begin{array}{lcl}
\| w_k - w^{*}\| & = & \| \mathcal{P}\big( A\lambda_k;\Theta \big) - \mathcal{P}\big( A\lambda^{*};\Theta \big) \| \vspace{1ex}\\
&\leq & \|A(\lambda_k - \lambda^{*})\| \vspace{1ex} \\
& \leq & \|A\|\|\lambda_k - \lambda^{*}\| \vspace{1ex}\\
& \leq & \|A\|\epsilon.
\end{array}
\end{equation*}
This shows that $w_k$ is also an $\hat{\epsilon}$-solution of \eqref{SVMP1} with $\hat{\epsilon} = \|A\|\epsilon$.
However, for gradient-type methods, we often have the guarantee $\varphi(\lambda_k) - \varphi(\lambda^{*}) \leq \epsilon$.
In this case, we can also prove that $w_k = \mathcal{P}\big( A\lambda_k;\Theta 
\big)$ is an approximate solution to \eqref{SVMP1}, but we omit this analysis.  
Furthermore, in our implementation we regularize $w$ only and
use constraints corresponding to the indices of the non-zero 
optimal Lagrange multipliers to calculate the corresponding bias term per 
Theorem~\ref{SVMA1}~(b) and Theorem~\ref{SVM XI BOUNDS}~(c).


\subsection{Numerical illustrations}\label{subsec:num_examples}
It has been observed in the literature that the dual gradient descent method has better performance than the subgradient method, both theoretically and practically.
In this section, we provide a few illustrative examples to verify this belief for our model and algorithm.

\subsubsection{Dual gradient method for solving unconstrained SVM}
We first apply our dual gradient descent method to solve the unconstrained SVM.
We generate a sample dataset of 1,000 points in $\R^2$ labeled by two classes.  
We first solve the dual problem~\eqref{SVMD1 matrix} without constraint on 
$w$, i.e.,~$\Theta=\R^2$, and use the methods from earlier sections to produce 
optimal weight vector $w^*$ and bias $b^*$ from optimal Lagrange multipliers 
$\lambda^*$. For our unconstrained hard-margin implementation, we used the 
{\tt cp} function from CVXOPT which uses the gradient of the Lagrangian dual 
cost function \eqref{dual gradient} and its Hessian. For the unconstrained 
problem, these are
\begin{equation*}
\nabla\varphi(\lambda)=A^TA\lambda-\mathbf{e}\text{ and }
\nabla^2\varphi(\lambda)=A^TA.
\end{equation*}

Implementing this using our sample data produces the separating hyperplane 
shown in Figure~\ref{fig:plot-unconstrained} with weight vector
$w_U^*=(1.689, -2.446)$ and bias $b_U^*=3.160$. For the unconstrained, case, 
we have $\|w_U^*\|=2.972$ which provides an optimal value 
$\frac{1}{2}\|w_U^*\|^2=4.4170$ for the hard-margin SVM problem \eqref{SVMP1}.
\begin{figure}[ht]
\centering
\includegraphics[width=0.7\linewidth]{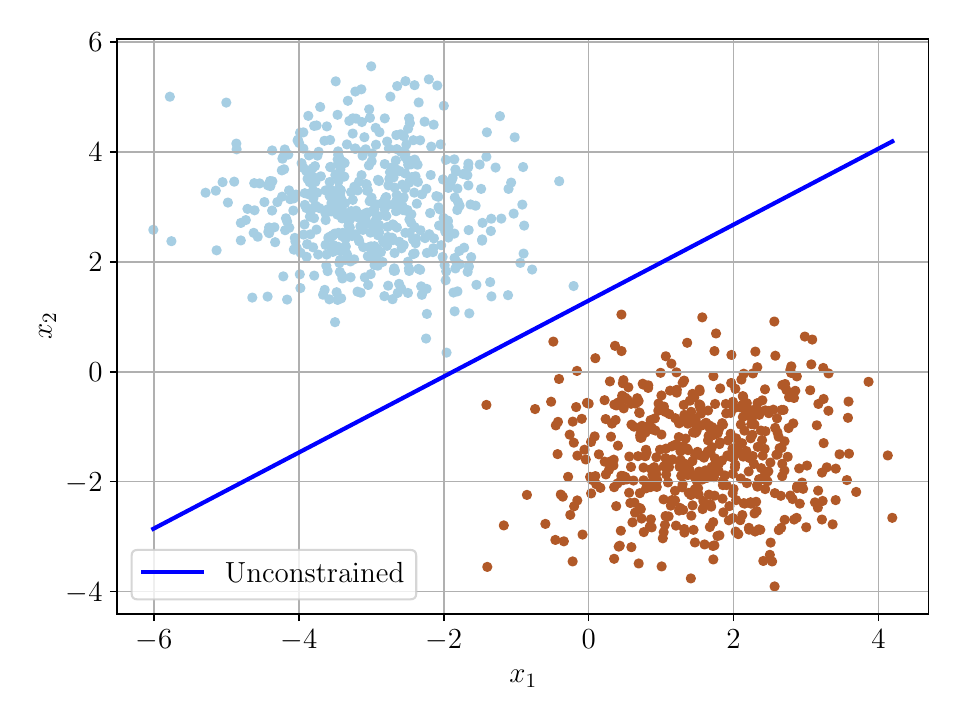}
\caption{Unconstrained solution to SVM problem via Lagrangian duality}\label{fig:plot-unconstrained}
\end{figure}

\clearpage
\subsubsection{Dual gradient descent method for solving SVM with a feasible closed ball constraint}

Our first constrained SVM example simulates a situation where we desire to 
find a weight vector within a given distance $r>0$ to a specified vector
$\bar x$, or $w^*\in\Theta=B^\prime(\bar x;r)$, essentially implementing 
the Pegasos projection using the Lagrangian duality framework, we have 
developed.  We select the constraint vector $\bar x$ and radius $r$ so there 
is a weight vector $w\in\Theta$ and corresponding bias $b$ satisfying the 
hard-margin SVM constraints in problem \eqref{SVMP1}.

Here, we implement the techniques from Subsection \ref{svma hard margin} using 
the first-order constrained optimization method {\tt minimize} from SciPy with 
gradient
$\nabla\varphi(\lambda)=A^T\mathcal P(A\lambda;\Theta)-\mathbf{e}$, where 
$\mathcal P(A\lambda;\Theta)$ is the Euclidean projection of $A\lambda$ onto 
$\Theta$. From the Lagrangian dual solution $\lambda^*$, we obtain the primal 
solution $w^*=A\lambda^*$ and apply complementary slackness from the KKT 
conditions using the constraints corresponding to the indices of the 
non-negative Lagrange multipliers to produce the optimal bias~$b^*$.

Using $\bar x=(2.5,-2.5)$ and $r=0.2$ produces the separating hyperplane shown 
in Figure~\ref{fig:plot-constrained_ball-feasible} produced by weight vector 
$w^*=(2.3455,-2.627)$ and bias $b^*=3.5795$. Margins of 1 are achieved, and in 
this example, where $w^*$ is constrained to have a norm larger than that of 
$w_U^*$, we have an optimal value $\frac{1}{2}\|w^*\|^2=6.2013$ for the 
constrained hard-margin SVM problem \eqref{SVMP1} which naturally exceeds that 
of the unconstrained problem.
\begin{figure}[ht]
\centering
\includegraphics[width=0.7\linewidth]{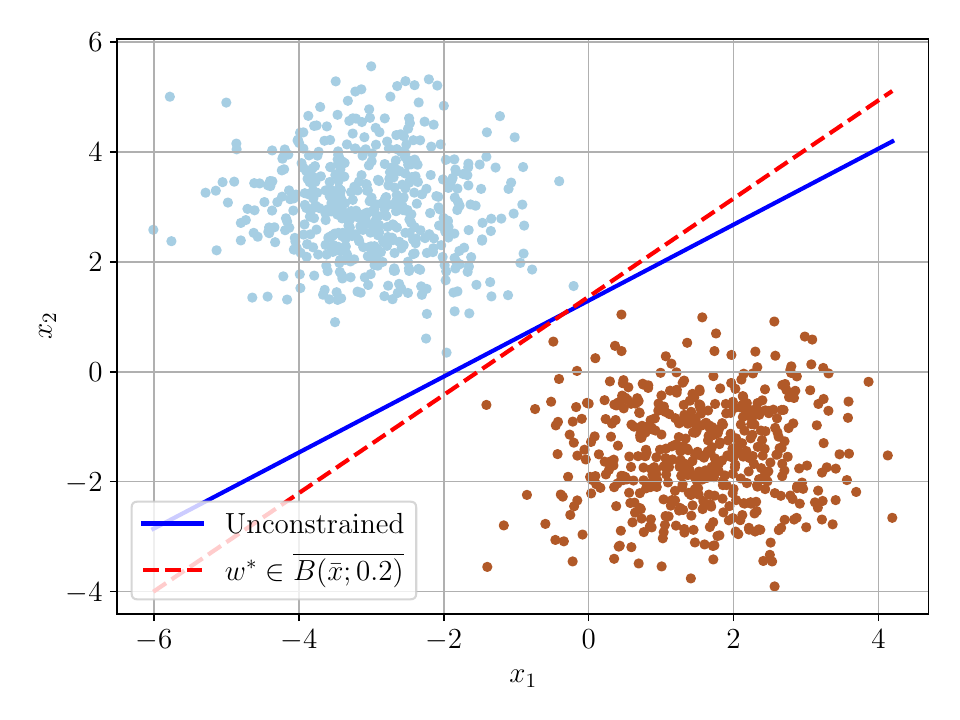}
\caption{Constrained SVM solution with $w^*\in B^\prime((2.5,-2.5);0.2)$\label{fig:plot-constrained_ball-feasible}}
\end{figure}

\clearpage
\subsubsection{Dual gradient descent method for solving SVM with a non-feasible closed ball constraint}

For our next example of a constrained problem, we again implement the Pegasos 
projection step using the Lagrangian duality framework, but this time, we 
constrain the weight vector $w$ using the closed convex set 
$\Theta=B^\prime(0;r)$ choosing $r<\|w_U^*\|$ so the additional 
hard-margin constraints from \eqref{SVMP1} are not attainable using any 
$w\in\Theta$. Therefore, for this example, we must use the constrained 
soft-margin SVM problem \eqref{SVMP2} so the primal problem will be feasible.  
Comparing the resulting Lagrangian dual formulations \eqref{SVMD1 matrix} and 
\eqref{SVMD2 matrix} attained from Subsections \ref{svma hard margin} and
\ref{svma soft margin}, the only adjustment needed for the soft-margin 
implementation is to tighten the constraint on the Lagrange multiplier vector 
$\lambda$ changing it from $\lambda_i\in\R_+$ to $\lambda_i\in[0,\frac{C}{m}]$ 
for all $i=1,\ldots,m$.

As in the previous example, here we also implement the techniques from 
Subsection \ref{svma hard margin} using Scipy's {\tt minimize} function with 
the same expression for the gradient
$\nabla\varphi(\lambda)=A^T\mathcal P(A\lambda;\Theta)-\mathbf{e}$ using the 
Euclidean projection again for a different closed convex set $\Theta$. The optimal bias $b^*$ is also determined in the same manner as the previous example.

Using $r=2.5$ produces the separating hyperplane shown in Figure 
\ref{fig:plot-constrained_ball} produced by weight vector $w^*=(1.421,-2.057)$ 
and bias $b^*=2.598$. As $\|w^*\|=2.5<\|w_U^*\|$, we obtain a smaller optimal 
value $\frac{1}{2}\|w^*\|^2=3.1250$ than the unconstrained problem yet margins 
of only 0.84 are achieved.
\begin{figure}[ht]
\centering
\includegraphics[width=0.7\linewidth]{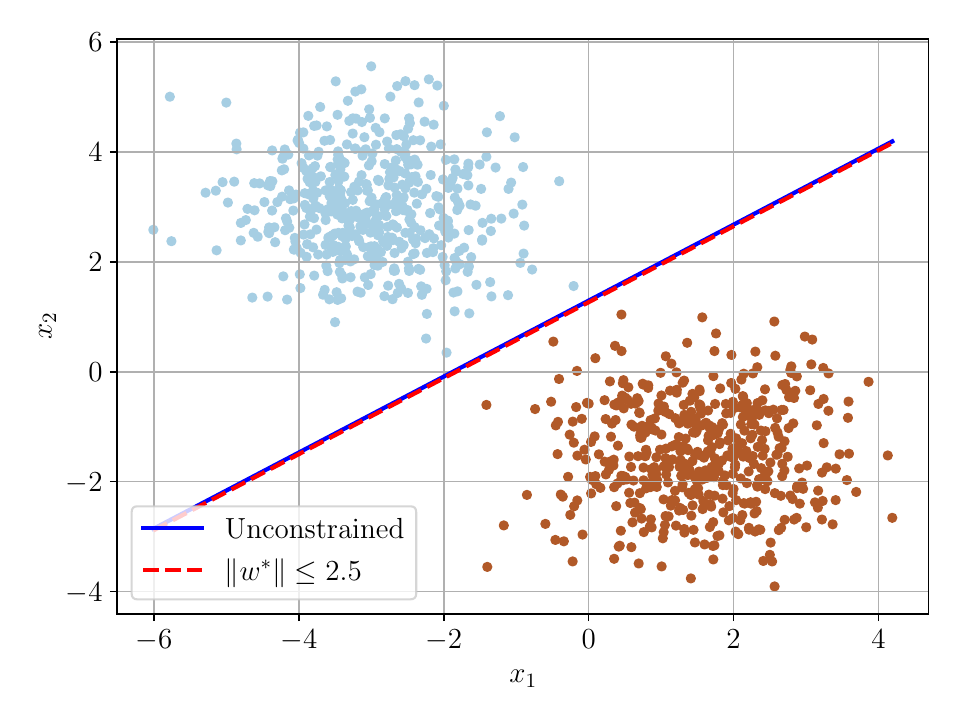}
\caption{Constrained SVM solution with $\|w^*\|\leq 2.5$}\label{fig:plot-constrained_ball}
\end{figure}


\clearpage
\subsubsection{Gradient descent method for solving SVM with the hyperplane constrained to a point}

For the last example, we constrain the separating hyperplane to pass through a 
point $\ox\in\mathbb R^2$. This can be achieved by adding an additional primal 
constraint $\la w,\ox\ra+b=0$ to \eqref{SVMP1} and optimizing over 
$\Theta=\R^2$. This results in a constrained hard-margin primal problem
\begin{equation*}
\left\{\begin{array}{ll}
{\displaystyle\min_{ w \in \R^2}} & \frac{1}{2}\|w\|^2, \vspace{1ex}\\
\mathrm{s.t.} & y_i(\la \mathbf{x}_i,w\ra+b)\geq 1\; \mbox{\rm for all }i=1,\ldots,m, \vspace{1ex}\\
&\la w,\ox\ra + b = 0.
\end{array}\right.
\end{equation*}
Using Lagrange multipliers $\lambda\in\R^m_+$ and $\mu\in\R$ and simplifying 
the subgradient of the Lagrangian at optimality leads to a dual system
\begin{equation*}
\left\{\begin{array}{ll}
{\displaystyle\min}&\ph(\lambda)= \frac{1}{2}\|\bar A\lambda\|^2-\la\mathbf{e},\lambda\ra, \vspace{1ex}\\
\mathrm{s.t.} & \lambda=(\lambda_1, \ldots, \lambda_m)\in \R^m_+,
\end{array}\right.
\end{equation*}
where $\bar A\in\R^{n\times m}$ is composed similarly to $A$ with the data 
matrix $X$ shifted by $\ox$ so the $j$th column of $\bar A$ is 
$y_j(\mathbf{x}_j-\ox)$. For this example, we employ the projected gradient 
method from Subsection~\ref{SMCP} for the dual problem and construct each 
iterate by projecting a step in the direction of the gradient of the cost 
function onto the constraint set.

Letting $\ox=(2,4)$, we achieved an optimal weight vector
$w^*=(3.839, -3.061)$ and bias $b^*=4.566$.  The usual desired margins are 
achievable under this constrained case, and naturally, the norm of the the 
optimal constrained weight vector $\|w^*\|=4.910$ exceeds that of the norm of 
the optimal unconstrained weight vector $\|w_U^*\|=2.972$ since the optimal 
separating hyperplane for the unconstrained problem does not pass through 
$\ox$. In this example, the desired margins of 1 are attained, and the optimal 
value $\frac{1}{2}\|w^*\|^2=12.053$ exceeds that of the unconstrained 
problem.
\begin{figure}[ht]
\centering
\includegraphics[width=0.7\linewidth]{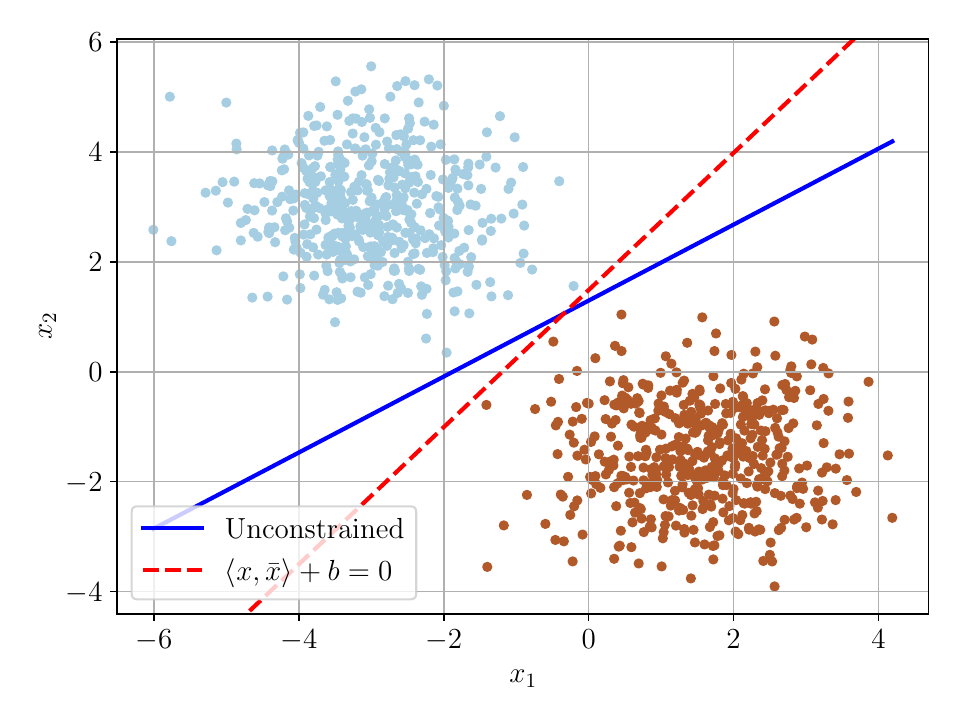}
\caption{Constrained SVM solution through $\ox=(2, 4)$}\label{fig:plot-constrained_xbar}
\end{figure}

\clearpage

\end{document}